\newtheorem{theorem}{Theorem}[section]
\newtheorem{lemma}{Lemma}[section]
\newaliascnt{corollary}{theorem}
\newaliascnt{proposition}{theorem}
\newtheorem{proposition}[proposition]{Proposition}
\newtheorem{definition}{Definition}[section]
\newtheorem{remark}{Remark}
\newaliascnt{assumption}{theorem}
\newcommand{\bbm}{\begin{bmatrix}}
	\newcommand{\ebm}{\end{bmatrix}}
\numberwithin{equation}{section}
\begin{document}



\title{EV Charging in Smart Grids: Mean Field Equilibrium and Approximate Non-Cooperative and Cooperative Strategies
\footnotetext{Authors are listed in alphabetical order. \\$^{1}$ School of Mathematics and Statistics, Xidian University, Xi'an, 710126, China. Email: lijunbo@xidian.edu.cn  \\
$^{2}$ School of Mathematical Sciences, University of Science and Technology of China, Hefei, 230026, China. Email: lfc2022@mail.ustc.edu.cn\\
$^{3}$ School of Mathematics and Statistics, Xidian University, Xi'an, 710126, China. Email: wangshihua@xidian.edu.cn}}

\author{
	Lijun Bo $^{1}$
	\and
	Fengcheng Liu $^{2}$
        \and 
    Shihua Wang $^{3}$
}




	
	\date{}
	
	\maketitle

    \begin{abstract}

We study the optimal charging strategies for large-scale electric vehicles in smart grids within a finite-horizon mean field game framework. We first establish the existence and uniqueness of the solution to the consistency condition equation, which characterizes the optimal charging behavior in the mean field limit. Building on this result, we construct approximate optimal charging strategies for a finite population of vehicles in both non-cooperative and cooperative settings. Finally, we provide numerical analyses that illustrate and compare the approximate strategies in the non-cooperative and cooperative games.
    
\vspace{0.1in}

\noindent{\textbf{Keywords}}: Smart grid, mean field game, mean field equilibrium, approximate Nash equilibrium.

\vspace{0.02in}
\noindent{\textbf{MSC 2020}}: 91A06, 49L12, 49N80, 60H30
    \end{abstract}

\section{Introduction}
As the global energy structure continues to shift towards low-carbonization, the number of electric vehicles (EVs) is growing rapidly. However, the spatiotemporal concentration of large-scale EV charging loads presents a significant challenge to the supply-demand balance of smart grids.  In this context, developing a scientifically robust and highly efficient collaborative optimization strategy framework for EV charging is not only crucial for enhancing the operational resilience of the smart grid but also a core breakthrough for optimizing users' electricity costs and achieving efficient allocation of energy resources {(\cite{Sultan2022} and \cite{Martinez2023})}. Typically, the number of EVs requiring charging in the grid is enormous, leading to the ``curse of dimensionality'' when determining the optimal charging (or storage) strategy for each vehicle.  

In the context of large-scale network structures, mean field games (MFGs) were independently proposed by \cite{Huang-M-C06}  and  \cite{lasry2007mean}, which devoted to the analysis of dynamic systems where a large number of players interact strategically with each other.  By employing the ``mean field'' approximation, MFGs simplify the complex multi-player game problem into an interaction between a single player and the aggregate behavior of the population. This framework provides an effective analytical tool for addressing the interactions among players in large-scale systems. For more comprehensive insights into MFG theory, the reader may refer to \cite{BS16}, \cite{CD18}, \cite{gomes2014mean} and the additional references cited within those works.  
Its application has spanned a wide range of fields including economics and finance ~\citep{lacker2019mean, nuno2018social}, large networks or graphs~\citep{lacker2022case}, operations research~\citep{wang2019mean, wang2022linear}, engineering and machine learning (\cite{ZhouXu2022}), epidemics control (\cite{LaguzetTurinici2015};   \cite{Lee2021}),  smart grid (\cite{GS21}) and more.

Most of the MFG literature in smart grids focuses on two types of problems: Nash equilibria or social optimum. Nash equilibria in mean-field control has been investigated in  \cite{Saldi2019} and \cite{Guo2022}.  The closest work to ours, authored by \cite{CohenZell2024}, examines finite-state, infinite-horizon mean-field games with ergodic costs,  demonstrates that solutions to the mean-field game system yield approximate Nash equilibria in corresponding finite-player games, and establishes a large deviation principle for empirical measures associated with these equilibria. 

Social optimum solutions for MFG are also well documented. For example, \cite{Li2016} examine the relationship between mean-field games and social welfare optimization problems. \cite{Salhab2016} focus on dynamic collective choice problems, and address scenarios where a large number of players cooperatively choose between multiple destinations while influenced by group behavior. \cite{Feng2019} involve a major agent and numerous minor agents and investigates a mixed stochastic LQG social optimization. 



Moreover, versatile framework for modeling price dynamics in markets with numerous interacting agents are applied in MFGs. Notable approaches are listed below: \cite{GS21} introduce a price formation model where numerous small players can store and trade a commodity like electricity; \cite{GGR20} propose a MFG model for price formation of a commodity with production subject to random fluctuations.  The dynamic game models have been extensively utilized to analyze duopolistic competition with sticky prices. Existing literature related to it includes 
\cite{CL04}, \cite{WMB15}, \cite{BGMS21}, \cite{WMB21} and \cite{H21}.


This paper aims to develop a strategic framework for a large market composed of numerous agents, using the methodology of MFG. Within a setting characterized by sticky prices and finite time horizons, we investigate how agents can optimize their behavior to minimize expected losses. In the cooperative game case, our objective is to reduce the average social cost across the market. On the other hand, under the non-cooperative framework, we focus on constructing an approximate Nash equilibrium that captures the decentralized decision-making of individual agents. This dual perspective allows for a comprehensive analysis of collective and competitive behaviors in large-scale smart grid systems.

Given the close resemblance between our model and the models of \cite{Gomes2023}, we now emphasize the distinctions in both model structure and methodological approach. The model in \cite{Gomes2023} determines the mean process of the control variable through a stochastic differential equation. Subsequently, by employing variational methods, they seek a price process such that, when agents take optimal actions to minimize transaction costs, the market clears and supply meets demand. 
Unlike in their work, our price process is endogenously generated within the system and is a stochastic process driven by control variables.
The control variables can be freely adjusted, and the agents' commodities are subject to independent noise disturbances. The more fundamental difference lies in our objective: we aim to construct $\epsilon$-optimal solutions for both cooperative and non-cooperative games in a large system, rather than achieving supply-demand equilibrium. These distinctions lead us to adopt a completely different methodological approach in our analysis-- our methodological approach mainly involves 
 the standard MFE approach, that is, to approximate stochastic processes using fixed functions, rather than employing variational techniques.

Our mathematical contributions can be summarized in three aspects:
First, within a finite-horizon framework, we propose a mean-field interaction cost function, through which we derive approximate solutions for both the cooperative and non-cooperative games, while the literature contains relatively few works that offer both approximate Nash equilibria and approximate social optima. 
Second, for specific matrix equations, we construct the solution induced by specific initial values and study the existence and uniqueness of the mean field through its properties. Most of the existing literature employs the Banach fixed-point theorem or other methods to find equilibrium points. In contrast to these approaches, our method is able to more precisely characterize the state of the mean field.
Third, we apply the Positive Real Lemma to demonstrate that certain parameter restrictions can ensure that the coercivity condition holds, thereby effectively addressing the challenge posed by the positive correlation between the cost function and the product of price and demand term.  To the best of our knowledge, this challenge has not been addressed in the existing literature. 


The rest of the paper is organized as follows: In Section 2, we  introduce the model of the smart grid, which consists of a large number of electric vehicles. In Section 3, we  construct the $\epsilon$-Nash equilibrium charging strategies for the EVs in the non-cooperate games. Afterwards, in section 4, we construct the asymptotic social optimum charging strategies for the cooperate games. In Section 5, we give  numerical simulations for the  charging strategies proposed in both the non-cooperate games and cooperate games. Section 6 concludes.

\section{The Model and the Optimal Strategies}\label{sec:model}

In this section, we propose the sticky price model and the optimal charging strategies of smart grid with numerous agents (e.g. EVs). First, we introduce our model. Let the number of agents in the smart grid be $N$. The agents control their electricity (also referred to as commodities) through controlling the charging rate over a shared common time horizon $[0,T]$. Let $(\Omega, \mathcal{F}, \mathbb{F}, \mathbb{P})$ be a complete filtered probability space with the filtration $\mathbb{F} = (\mathcal{F}_t)_{t\in[0,T]}$ satisfying the usual conditions, which supports the independent $N$-dimension standard Brownian motions $(W_t^1,\cdots,W_t^N)_{t\in[0,T]}$. Denote the electricity held by agent $i$ at time $t$ by $X_t^i$. Let the dynamics of $X^i$ satisfy  
\begin{equation}\label{Xi}
    dX_t^i=v_t^idt+\sigma_i dW_t^i,\quad X_0^i=x_0^i\in\mathbb{R}.
\end{equation}

 Here, the parameter $\sigma_i \ge 0$ measures the volatility of agent $i$'s electricity, and the stochastic noise term represents uncertainty or variability in the smart grid system, such as grid disturbances or physical noise, communication or control delays, or user behavior uncertainty. In addition, we assume that agent $i$ is equipped with an initial electricity $x_0^i\in \mathbb{R}$, 
 As  in \eqref{Xi}, the control variable for agent $i$, 
  also referred to as trading rate (or charging rate) is denoted by $v^i$. To introduce the admissible strategy set of $v^i$, we assign every stochastic control 
$w$ the norm $\left\| w \right \|=\sqrt{\mathbb{E}\left[\int_0^T w_t^2  dt\right]}$. Then, $v^i$ should belong to the admissible strategy set
 \begin{align*}
    \mathcal{A}_i=\Big\{v^i\ |\  &v^i \text{ is a  real-value process progressively measurable w.r.t } \mathbb{F},\  \|v^i\|< \infty,\\
    & \text{and there exists a unique strong solution to \eqref{Xi}}\Big\} ,
\end{align*}

We then introduce the evolution of the price process $P$. We consider a dynamic oligopoly setting, where prices do not instantly adjust to changes in market conditions, such as shifts in supply or demand. Instead, its evolution is affected by its current price and average trading rate. A natural way of modeling the sticky price in dynamic
oligopoly is (\cite{CL04} and \cite{WMB15})
\begin{equation}
    d P_t=\alpha(\beta-Q_t-P_t)dt,\quad P_0=p_0\label{price process},
\end{equation}
where $\alpha, \beta >0$ are constant market factors, and the initial price is $p_0$. The parameter $\alpha$ measures the sensibility of $P$ to current situations, and $Q_t:=\frac{1}{N}\sum_{i=1}^N v_t^i$ is the average trading rate at time $t$.
The sticky price model  well captures the persistent effects of the past price conditions on the current prices, reflecting realistic frictions in electricity markets.

In the following, by referring to existing literature, we propose a reasonable cost function for each agent in the game. Inspired by \cite{Gomes2023}, we let the expected overall cost of agent $i$ be of the following form
\begin{equation} \label{LQObject}
J_i(v^i, \boldsymbol{v}^{-i}) =\mathbb{E}\left[\int_0^T\left(L(X_t^i,v_t^i)+P_tv_t^i\right)dt+\Psi(X_T^i)\right],
\end{equation}
where $\boldsymbol{v}^{-i}:=\{v^1,\cdots,v^{i-1},v^{i+1},\cdots, v^N\}$. 
We assume the running cost and the terminal cost to be
\begin{equation*}
    L(x,v)=\frac{\eta}{2}(x-\kappa)^2+\frac{c}{2}v^2\quad\mathrm{and}\quad\Psi(x)=\frac{\gamma}{2}(x-\zeta)^2
\end{equation*}
respectively. The parameter $\zeta$ corresponds to the preferred final storage, and $\kappa$ is the preferred instantaneous storage. We make the assumption that $ \eta>0,\  \gamma\ge 0,\  c>0,\  \kappa\in \mathbb{R},\  \text{and}\ \zeta\in \mathbb{R} $ are constants. Compared with the conventional LQG framework, this cost function includes an additional $vP$ term. The introduction of this term weakens the coercivity condition, and increases the difficulty of estimating $\epsilon_N$ and deriving the optimal strategies.

Finally, building on the previously proposed cost function, we introduce an objective function for the system. We consider two different game cases.
In one case (namely, the non-cooperate game case), each agent controls its commodity independently, and does not take into account how its actions might affect others' expected cost. In other words, agent $i$ only aims to minimize its expected value function $J_i$, and does not consider how $J_k (\forall1\le k\le N, k\neq i)$ changes according to its actions. Under this setting, it is natural for us to find an $\epsilon$-Nash equilibrium for the system.

A Nash equilibrium is a key concept in game theory that represents a stable state in a strategic interaction where no player can improve their outcome by unilaterally changing their strategies. It occurs when each player's strategy is optimal given the strategies chosen by all other players. However, as $N$ enlarges, it becomes rather challenging to find the exact Nash equilibrium solution, so we aim to find a set of strategies that approaches the best outcome as $N$ enlarges, referred to as a set of $\epsilon$-Nash equilibrium strategies. An $\epsilon$-Nash equilibrium characterizes a situation where each player's chosen strategy is almost the best response to the strategies of the others, with the difference between the expected personal cost of this strategy and that under the optimal strategy not exceeding a positive number $\epsilon_N$, which vanishes as $N$ goes to $\infty$. We introduce the precise definition of an $\epsilon$-Nash equilibrium:
\begin{definition}
    A set of admissive strategies $\hat{\boldsymbol{v}}=\{\hat{v}^1,\cdots,\hat{v}^N\} \in \Pi_{i=1}^N\mathcal{A}^i$ is an $\epsilon$-Nash equilibrium if
\begin{equation}
    J_i(\hat{v}^i,\hat{\boldsymbol{v}}^{-i})-\epsilon_N\le\inf_{v^i \in \mathcal{A}_i} J(v^i,\hat{\boldsymbol{v}}^{-i})\le J_i(\hat{v}^i,\hat{\boldsymbol{v}}^{-i}), \quad \forall i= 1,\ldots, N \label{Nash Obj},
\end{equation}
where $\epsilon_N$ goes to 0 as $N\to \infty$.
\end{definition}

Under the second case (namely, the cooperate game case), instead of controlling the electricity independently, the agents cooperate with each other, and aim to achieve the best collective outcome. In this article, the collective outcome represents the average value of $J_i(v^i, \boldsymbol{v}^{-i})$. It reads
\begin{equation*}
    J_{soc}(v^1,v^2,\cdots,v^N)=\frac{1}{N}\sum_{i=1}^N\mathbb{E}\left[\int_0^T\left(L(X_t^i,v_t^i)+P_tv_t^i\right)dt+\Psi(X_T^i)\right].
\end{equation*}
In large-scale systems, finding the exact social optimum becomes complex, so we focus on finding a set of strategies that approaches the best collective outcome as the number of agents tends to infinity. More specifically, we introduce the following definition:
\begin{definition}
    We say that a set of admissive strategies $\{\check{v}^i\}_{i=1}^N$ is a set of $\epsilon$-optimal social strategies if
\begin{equation}
    |J_{soc}(\check{v}^1,\check{v}^2,\cdots,\check{v}^N)-\inf_{v^i\in \mathcal{A}_i,1\le i\le N} J_{soc}(v^1,v^2,\cdots,v^N)|\le \epsilon_N, \label{Social Obj}
\end{equation}
where $\epsilon_N$  goes to 0 as $N\to \infty$.
\end{definition}
Here, the restriction for $\epsilon_N$  reflects that the expected social cost under the proposed strategies converges to that of the social optimal strategies as $N$ goes to $\infty$.


It is worth emphasizing that asymptotic social optimum strategies may not be $\epsilon$-Nash equilibrium strategies. Under the asymptotic social optimum strategies, an agent may significantly reduce its own expected cost by only changing its own strategy, while causing even more cost for others, and comprehensively, give rise to the overall expected cost.


\section{Non-cooperate Game}
In this section, we aim to find an $\epsilon$-Nash equilibrium for this system. We will use a mean-field approximation approach, in which we construct $C[0,T] $ functions to approximate certain processes. Our approach can be outlined in the following steps.
\begin{enumerate}
  \item  The first step in constructing a mean-field approximation is to approximate $Q$ with a continuous  function $\bar{Q}=(\bar{Q}(t))_{t\in[0,T]}$. It follows from \eqref{price process} that the approximate price function $(\bar{P}(t))_{t\in[0,T]}$ satisfies 
  \begin{equation}\label{approximate price process}
      d \bar{P}(t)=\alpha(\beta-\bar{Q}(t)-\bar{P}(t))dt,\quad
      \bar{P}(0)=p_0.
  \end{equation}

 \item Replace $P_t$ by $\bar{P}(t)$ in objective function $J_i$ to obtain an auxiliary objective function
 \begin{equation}\label{barJ}
    \bar{J}_i(v^i;\bar{P})=\mathbb{E}\left[\int_0^T (L(X_t^i,v_t^i)+\bar{P}(t) v_t^i)dt
+\Psi(X_T^i)\right],
\end{equation}
subject to state process $(X_t)_{t\in[0,T]}$, which satisfies \eqref{Xi}.
\item For each agent $i$, solve the auxiliary control problem \eqref{barJ} to find the optimal control $\hat{v}^i$. We calculate the average of the corresponding  optimal controls and remove the noise term to obtain a function which measures the expectation of the average of the optimal controls. By requiring that this function equal $\bar{Q}$,  we determine the exact form of $\bar{Q}$.
\item Prove that under certain assumptions, the set of controls derived from $\bar{Q}$ is indeed an $\epsilon$-Nash equilibrium.
\end{enumerate}
\begin{remark}
The rationale for proposing the first two steps is primarily based on the following reasons. Firstly, when the population enlarges to $\infty$, the representative agent has no influence on $Q$, and therefore has no influence on $P$, as but one agent amid a continuum. From its perspective, the process $P$ should be evolved in its cost as a deterministic function. Secondly, the optimization problem for  (\ref{LQObject}) becomes much easier to solve once we treat $P$ as a fixed continuous function, instead of a stochastic process. For each  
$\bar{Q}$, a unique process $\bar{P}$ is determined by \eqref{approximate price process}. Each agent determines its best response to minimize \eqref{barJ}. By calculating the expectation of the average of the optimal controls, we derive a function determined by $\bar{Q}$. If the expectation of this function is equal to $\bar{Q}$, a closed-loop is formed. Within this loop, the strategy adopted by each agent yields outcomes that closely approximate those of the actual optimal strategy as the number of agents increases.
\end{remark}

For the given function $\bar{Q}\in C[0,T]$, we derive the  approximate price process $\bar{P}$  by \eqref{approximate price process}. We propose the value function of the auxiliary control problem \eqref{barJ}, 
\begin{equation*}
    K_i(t,x)=\inf_{v \in \mathcal{B}_i}\mathbb{E}\left[\int_t^T (L(X_s,v_s)+\bar{P}(s) v_s)ds
+\Psi(X_T)\right].
\end{equation*}
where $t$ represents the starting time,  $X$ satisfies \begin{equation}\label{Xi2}
    d X_s =v_sds+\sigma_i dW_s,\quad \forall t\le s\le T;\quad X_t=x,
\end{equation}
and the admissible set for $v$ is denoted by 
\begin{align*}
    \mathcal{B}_i=\Big\{v\ |\  &v \text{ is a  real-value process progressively measurable w.r.t } (\mathbb{F}_s)_{t\le s\le T},\  \mathbb{E}\left[\int_t^T v^2_s ds\right]< \infty,\\
    & \text{and there exists a unique strong solution to \eqref{Xi2}}\Big\} ,
\end{align*}
By dynamic programming principle,  the HJB equation satisfies 
\begin{equation}\label{HJB eq for Nash}
   \frac{\partial {K}_i(t,x)}{\partial t} +\inf_{v^i \in \mathcal{B}_i} \left\{v_t^i \frac{\partial {K}_i(t,x)}{\partial x} +\frac{c}{2} ({v_t^i})^2+v_t^i\bar{P}(t)\right\}+\frac{\sigma_i^2}{2}\frac{\partial^2 {K}_i(t,x)}{\partial x^2}+\frac{\eta}{2}  (x-\kappa)^2=0,
\end{equation}
along with the terminal condition $K_i(T,x)=\frac{\gamma}{2}(x-\zeta)^2$. We make the ansatz ${K}_i(t,x)=a_i(t)x^2+ B_i(t)x+F_i(t)$ yields. Furthermore, the optimal feedback control achieving
the minimum is given by
\begin{equation}\hat{v}_t^i=-\frac{\bar{P}(t)+2a_i(t)x+B_i(t)}{c},\quad \forall t\in [0,T].
   \label{Optimal strategy for Nash} 
\end{equation}
By substituting \eqref{Optimal strategy for Nash} into \eqref{HJB eq for Nash}, we obtain that
\begin{align}
    &\qquad\qquad\qquad a'_i(t) + \frac{\eta}{2} - \frac{(2a_i(t))^2}{2c} = 0, & & a_i(T) = \frac{\gamma}{2}, & \label{Nash HJB eq1} \\
    &\qquad\qquad\qquad B'_i(t) - \eta\kappa - \frac{2a_i(t)(\bar{P}(t) + B_i(t))}{c} = 0, & & B_i(T) = -\gamma\zeta, & \label{Nash HJB eq2} \\
    &\qquad \qquad\qquad  F'_i(t) - \frac{(\bar{P}(t) + B_i(t))^2}{2c} + \frac{\eta\kappa^2}{2} + \sigma_i^2a(t) = 0, & & F_i(T) = \frac{\gamma\zeta^2}{2}. & \label{Nash HJB eq3}
\end{align}

We will subsequently verify that \eqref{Nash HJB eq1}–\eqref{Nash HJB eq3} indeed admit a solution. To this end, we introduce \autoref{a(t) exists theorem}.
\vskip 3mm
\begin{proposition}\label{a(t) exists theorem}
  The solution to (\ref{Nash HJB eq1}), (\ref{Nash HJB eq2}) and (\ref{Nash HJB eq3}) exists and is unique.
\end{proposition}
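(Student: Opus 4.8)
The plan is to exploit the cascade (triangular) structure of the system \eqref{Nash HJB eq1}--\eqref{Nash HJB eq3}: equation \eqref{Nash HJB eq1} for $a$ is self-contained, \eqref{Nash HJB eq2} becomes a \emph{linear} ODE for $B$ once $a$ is known, and \eqref{Nash HJB eq3} reduces to a quadrature for $F_i$ once $a$ and $B$ are in hand. Thus it suffices to establish existence and uniqueness stagewise, starting from $a$ and propagating forward through $B$ and then $F_i$.

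First I would treat \eqref{Nash HJB eq1}, which after rearranging reads $a'(t)=\frac{2}{c}a(t)^2-\frac{\eta}{2}$ with terminal datum $a(T)=\gamma/2$. This is an autonomous scalar Riccati equation whose right-hand side is locally Lipschitz, so a unique local solution exists; the only threat to existence on all of $[0,T]$ is finite-time blow-up driven by the quadratic term. To rule this out I would pass to the time-reversed variable $s=T-t$, under which the equation becomes $\frac{da}{ds}=\frac{2}{c}(a_*^2-a^2)$ with $a_*:=\frac{1}{2}\sqrt{\eta c}>0$ and initial value $a(0)=\gamma/2$. The phase line shows that $a_*$ is a stable and $-a_*$ an unstable equilibrium for the forward $s$-flow; since the standing assumption $\gamma\ge 0$ gives $a(0)=\gamma/2\ge 0>-a_*$, the trajectory is trapped monotonically between $\gamma/2$ and $a_*$ and can never escape to $\pm\infty$. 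This a priori bound yields global existence and uniqueness on $[0,T]$. Equivalently, I would linearize via $a=-\frac{c}{2}\,u'/u$, which turns \eqref{Nash HJB eq1} into $u''=\frac{\eta}{c}u$ with explicit hyperbolic solutions, exhibiting $a$ in closed form and confirming boundedness directly.

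With $a\in C[0,T]$ bounded, I would next observe that \eqref{Nash HJB eq2} is linear in $B$: explicitly $B'(t)=\eta\kappa+\frac{2a(t)}{c}\bar{P}(t)+\frac{2a(t)}{c}B(t)$. Its coefficient $\frac{2a(t)}{c}$ and inhomogeneity $\eta\kappa+\frac{2a(t)}{c}\bar{P}(t)$ are continuous on $[0,T]$, because $a$ is continuous and $\bar{P}$ is the (continuous) solution of \eqref{approximate price process}. Standard linear ODE theory --- explicitly, the integrating factor $\exp\!\bigl(-\int_t^T \tfrac{2a(r)}{c}\,dr\bigr)$ together with the terminal condition $B(T)=-\gamma\zeta$ --- then produces a unique solution $B\in C^1[0,T]$.

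Finally, given $a$ and $B$, equation \eqref{Nash HJB eq3} has the form $F_i'(t)=\frac{(\bar{P}(t)+B(t))^2}{2c}-\frac{\eta\kappa^2}{2}-\sigma_i^2 a(t)$, whose right-hand side is a known continuous function of $t$; integrating backward from $T$ against the terminal value $F_i(T)=\frac{\gamma\zeta^2}{2}$ gives the unique $F_i$ by the fundamental theorem of calculus. Uniqueness for the whole system is inherited stagewise. The main obstacle is the Riccati step: one must genuinely verify that no blow-up occurs on $[0,T]$, and the crux is recognizing that the sign condition $\gamma\ge 0$ places the terminal value in the basin of attraction of the stable equilibrium $a_*$ (equivalently, that the linearizing denominator $u$ has no zero on $[0,T]$). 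Once $a$ is controlled, the remaining two stages are routine.
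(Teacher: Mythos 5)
Your proof is correct, and its cascade structure---solve the Riccati equation \eqref{Nash HJB eq1} for $a$, then the linear ODE \eqref{Nash HJB eq2} for $B$, then the quadrature \eqref{Nash HJB eq3} for $F_i$---is exactly the paper's; the two arguments differ only in how the Riccati step is closed. The paper integrates \eqref{Nash HJB eq1} explicitly by separation of variables, obtaining closed-form $\tanh$-type formulas in the three cases $\gamma>\sqrt{c\eta}$, $\gamma=\sqrt{c\eta}$, $\gamma<\sqrt{c\eta}$, from which global existence, uniqueness and boundedness on $[0,T]$ are read off directly. You instead reverse time and run a phase-line trapping argument: writing $a_*=\tfrac12\sqrt{c\eta}$, the terminal datum $a(T)=\gamma/2\ge 0>-a_*$ lies in the basin of attraction of the stable equilibrium $a_*$ of the reversed flow $\frac{da}{ds}=\frac{2}{c}\bigl(a_*^2-a^2\bigr)$, so the trajectory is monotone, confined between $\gamma/2$ and $a_*$, and cannot blow up; uniqueness then follows from local Lipschitz continuity. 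Your route avoids the explicit case analysis, handles all three cases uniformly, and yields the a priori bound $|a(t)|\le\max\bigl(\gamma/2,\,a_*\bigr)$, which is precisely what \autoref{thm:veri} later needs (boundedness of $a$, hence of $B$ and $F_i$, for the quadratic growth of $K_i$); it would also survive time-dependent coefficients, where explicit integration would not. What the paper's computation buys is the closed form of $a$ itself, convenient for the numerics but not logically required for the proposition. Your alternative linearization $a=-\tfrac{c}{2}u'/u$ with $u''=\tfrac{\eta}{c}u$ is equivalent to the paper's formulas; to make it self-contained one would verify $u\neq 0$ on $[0,T]$ (immediate: normalizing $u(T)=1$, $u'(T)=-\gamma/c$, both hyperbolic terms in $u(t)$ are nonnegative for $t\le T$), but your phase-line argument already renders this superfluous. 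The $B$ and $F_i$ stages coincide with the paper's integrating-factor and direct-integration steps, including the correct observation that $\bar{P}$, being the solution of \eqref{approximate price process} for a given continuous $\bar{Q}$, is continuous.
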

\begin{remark}
   Throughout this paper, unless stated otherwise, we seek classical solutions to the equations under consideration. Since the solution to  \eqref{Nash HJB eq1} and \eqref{Nash HJB eq2} is uniform over $i$, so in the following, we abbreviate $a_i(t), B_i(t)$ as $a(t),B(t)$, respectively.
\end{remark}
 The proofs of the Propositions in this paper are collected to Appendix \ref{sec:appendix2}. 

Next, we present the verification theorem for the best-response control that minimizes \eqref{barJ}.

\begin{theorem}\label{thm:veri}
    (Verification theorem) We introduce the feedback control function as follows:
    \begin{equation*}
        \hat{v}_t^i=-\frac{\bar{P}(t)+2a(t){X}_t^i+B(t)}{c},\quad \forall t\in [0,T],
    \end{equation*}
  where $a(t), B(t)$ are given by \eqref{Nash HJB eq1} and \eqref{Nash HJB eq2}.  Then, the SDE 
    \begin{equation*}
    d{X}_t^i=\hat{v}_t^idt+\sigma_idW_t^i,\quad {X}_0^i=x_0^i
    \end{equation*}
    admits a unique strong solution, denoted by $\hat{X}_t^i$, and $\hat{v}^i$
 is an optimal Markovian control for minimizing \eqref{barJ}. \end{theorem}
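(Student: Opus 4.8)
The plan is to follow the classical verification route for stochastic control: first establish that the closed-loop state equation is well posed, and then use It\^o's formula together with the inequality encoded in the HJB equation to show that $K_i(0,x_0^i)$ is a lower bound for $\bar{J}_i(v^i;\bar{P})$ over all admissible controls, attained exactly by $\hat{v}^i$.

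For the first assertion, I substitute the feedback $\hat{v}_t^i=-(\bar{P}(t)+2a(t)X_t^i+B(t))/c$ into the state dynamics, obtaining the linear SDE
\begin{equation*}
  dX_t^i=-\frac{2a(t)}{c}X_t^i\,dt-\frac{\bar{P}(t)+B(t)}{c}\,dt+\sigma_i\,dW_t^i,\qquad X_0^i=x_0^i.
\end{equation*}
Its coefficients are continuous in $t$ on $[0,T]$: $a$ and $B$ are the $C^1$ solutions furnished by \autoref{a(t) exists theorem}, and $\bar{P}$ is continuous by \eqref{approximate price process}. Continuous coefficients on a compact interval are bounded, hence globally Lipschitz and of linear growth in the state, so standard SDE theory yields a unique strong solution $\hat{X}^i$; it may in fact be written explicitly through the integrating factor $\exp\!\big(\int_0^t \tfrac{2a(s)}{c}\,ds\big)$, from which one reads off that $\hat{X}^i$ possesses finite moments of every order.

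For optimality, fix any $v^i\in\mathcal{A}$ with associated state $X^i$. Since $K_i(t,x)=a(t)x^2+B(t)x+F_i(t)$ is $C^{1,2}$ (its coefficients being $C^1$), It\^o's formula gives
\begin{equation*}
  dK_i(t,X_t^i)=\Big(\partial_tK_i+v_t^i\,\partial_xK_i+\tfrac{\sigma_i^2}{2}\partial_{xx}K_i\Big)\,dt+\sigma_i\,\partial_xK_i\,dW_t^i.
\end{equation*}
By construction $a,B,F_i$ satisfy \eqref{Nash HJB eq1}--\eqref{Nash HJB eq3}, which is precisely the statement that $K_i$ solves \eqref{HJB eq for Nash} with terminal data $K_i(T,x)=\Psi(x)$. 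Hence for every $(t,x)$ and every real $v$,
\begin{equation*}
  \partial_tK_i+v\,\partial_xK_i+\tfrac{\sigma_i^2}{2}\partial_{xx}K_i\ge -\big(L(x,v)+\bar{P}(t)v\big),
\end{equation*}
with equality exactly when $v=-(\bar{P}(t)+2a(t)x+B(t))/c=\hat{v}$, since the infimum in \eqref{HJB eq for Nash} is the minimum of the strictly convex (as $c>0$) quadratic $v\mapsto v\,\partial_xK_i+\tfrac{c}{2}v^2+v\bar{P}(t)$. Integrating from $0$ to $T$, taking expectations, and using $K_i(T,X_T^i)=\Psi(X_T^i)$ then yields $K_i(0,x_0^i)\le \bar{J}_i(v^i;\bar{P})$, with equality when $v^i=\hat{v}^i$; this is the claimed optimality.

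The main obstacle is the justification of the steps hidden in ``taking expectations'': that the stochastic integral $\int_0^{\cdot} \sigma_i\,\partial_xK_i(s,X_s^i)\,dW_s^i$ is a genuine martingale for an \emph{arbitrary} admissible control, not only for $\hat{v}^i$. This reduces to the bound $\mathbb{E}\!\int_0^T(2a(t)X_t^i+B(t))^2\,dt<\infty$. Writing $X_t^i=x_0^i+\int_0^t v_s^i\,ds+\sigma_iW_t^i$ and applying Cauchy--Schwarz to the drift term, one obtains $\mathbb{E}\!\int_0^T (X_t^i)^2\,dt\le C\big(|x_0^i|^2+T^2\|v^i\|^2+\sigma_i^2T^2\big)<\infty$ because $\|v^i\|<\infty$; boundedness of $a,B$ on $[0,T]$ then closes the estimate. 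To avoid any a priori moment hypothesis one localizes with $\tau_n=\inf\{t:\ |X_t^i|\ge n\}$, runs the argument on $[0,\tau_n]$ where the stochastic integral is a true martingale, and passes to the limit by dominated convergence, the uniform integrability being again supplied by the $L^2$ bound. No other step presents difficulty: the pointwise minimizer follows from $c>0$, and the terminal match with $\Psi$ is built into $a(T),B(T),F_i(T)$.
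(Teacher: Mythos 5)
Your proof is correct, but it takes a genuinely more self-contained route than the paper. The paper's proof is essentially a hypothesis check followed by a citation: it observes that $\hat v^i$ is a measurable feedback, that $a$, $B$, $F_i$ are bounded on $[0,T]$ (from \autoref{a(t) exists theorem}) so that $K_i$ satisfies the quadratic growth condition, that $K_i(T,\cdot)=\Psi(\cdot)$, and that \eqref{Nash HJB eq1}--\eqref{Nash HJB eq3} make $K_i$ a classical solution of the HJB equation \eqref{HJB eq for Nash} with the infimum attained at $\hat v^i$; it then concludes by invoking the general verification theorem in \cite{Huyen}. You instead prove the verification statement from scratch: well-posedness of the closed-loop linear SDE from continuity (hence boundedness and Lipschitzness on $[0,T]$) of the coefficients, It\^o's formula applied to $K_i(t,X^i_t)$ along an arbitrary admissible control, the pointwise HJB inequality with equality exactly at the feedback minimizer (using $c>0$), and --- the part the paper leaves implicit inside the cited theorem's hypotheses --- an explicit justification that the stochastic integral has zero expectation, via the estimate $\mathbb{E}\int_0^T (X^i_t)^2\,dt \le C\big(|x_0^i|^2+T^2\|v^i\|^2+\sigma_i^2T^2\big)$, with localization by $\tau_n$ as a fallback. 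The two arguments rest on the same key facts (quadratic growth of $K_i$, the terminal match, strict convexity in $v$); what your version buys is independence from the textbook reference and transparency about exactly where admissibility $\|v^i\|<\infty$ enters the martingale argument, while the paper's version buys brevity. One small point you should make explicit: the equality case requires $\hat v^i\in\mathcal{A}$, which does follow from your integrating-factor representation (finite moments of $\hat X^i$ together with boundedness of $a$, $B$, $\bar P$ give $\|\hat v^i\|<\infty$), but since optimality over $\mathcal{A}$ is only meaningful once the candidate lies in $\mathcal{A}$, this deserves a sentence rather than being left implicit.
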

\begin{proof}
$\hat v:(t,x)\to -\frac{\bar{P}(t)+2a(t)x+B(t)}{c}$ is   Lipschitz-continuous w.r.t $(t,x)\in [0,T]\times \mathbb{R}$, implying that \eqref{Xi} admits a unique strong solution (Theorem 8.3 in \cite{Gall}).  
Since  $a(t)$, $B(t)$ and $F_i(t)$ are bounded on $[0,T]$, the function $K_i(t,x)$ satisfies the quadric growth condition; that is, there exists $C>0$ such that $|K_i(t,x)|\le C(1+x^2),\ \forall(t,x)\in [0,T]\times \mathbb{R}$. Additionally, the terminal condition $K_i(T,\cdot)=\Psi(\cdot)$ is fulfilled. Under these  conditions on $K_i$, together with 
\begin{align*}
     &\frac{\partial {K}_i(t,x)}{\partial t} +\inf_{v^i \in \mathcal{A}_i} \left\{v_t^i \frac{\partial {K}_i(t,x)}{\partial x} +\frac{c}{2} ({v_t^i})^2+v_t^i\bar{P}(t)\right\}+\frac{\sigma_i^2}{2}\frac{\partial^2 {K}_i(t,x)}{\partial x^2}+\frac{\eta}{2}  (x-\kappa)^2\\
     &= \frac{\partial {K}_i(t,x)}{\partial t}  +\hat{v}_t^i \frac{\partial {K}_i(t,x)}{\partial x} +\frac{c}{2} ({\hat{v}_t^i})^2+\hat{v}_t^i\bar{P}(t)+\frac{\sigma_i^2}{2}\frac{\partial^2 {K}_i(t,x)}{\partial x^2}+\frac{\eta}{2}  (x-\kappa)^2=0,
\end{align*}
we obtain the  result  by applying the arguments in  Theorem 3.5.2, \cite{Huyen}.
\end{proof}

We now approximate the average commodity process by a deterministic function. The rationale behind this approximation is as follows. Let $\overline{X}_t$ denote the average of $X_t^i$.  
Then the average optimal control takes the form $$-\frac{2a(t) \overline{X}_t+{B}(t)+\bar{P}(t)}{c},$$ which implies that the mean trading process evolves as follows
\begin{equation*}
    d \overline{X}_t=-\frac{2a(t) \overline{X}_t+{B}(t)+\bar{P}(t)}{c}dt+\frac{1}{N}\sum_{i=1}^N \sigma_i dW_t^i, \quad\overline{X}_0=\overline{x}_0^N.
\end{equation*}
Note that $\frac{1}{N}\sum_{i=1}^N \sigma_i dW_t^i$ represents a Brownian motion with magnitude $\frac{1}{N}\sqrt{\sum_{i=1}^N\sigma_i^2}$. This magnitude clearly vanishes to zero as $N\to \infty$ provided that  $\sigma_i\ (\forall i=1,2,\ldots,N)$ are  bounded by a  constant independent of $N$. Hence, under the standard mean-field approximation, we replace the average stochastic process by a deterministic function $\bar{x}(t)$, whose dynamics satisfy
\begin{equation*}
    d\bar{x}(t)=-\frac{2a(t)\bar{x}(t)+B(t)+\bar{P}(t)}{c}dt,\quad \bar{x}(0)=\bar{x}_0^N.
\end{equation*}
By consistency condition, the expectation of the induced average trading process must satisfy
\begin{equation*}\label{eq:consist}
    -\frac{2a(t)\bar{x}(t)+B(t)+\bar{P}(t)}{c} = \bar{Q}_t , \quad  \forall t\in[0,T].   
\end{equation*}
Using the relationship between $\bar{P}(t), B(t)$ and $\bar{Q}(t)$, this requirement is equivalent to the following system:
\begin{equation*}\label{consist-sym}
\left\{
\begin{aligned}
&d{\bar{P}(t)}=\alpha(\beta-\bar{P}(t)-\bar{Q}(t))dt,\quad\bar{P}(0)=p_0, \\
&B'(t)-\eta\kappa -\frac{2a(t)(\bar{P}(t)+B(t))}{c}=0,\quad B(T)=-\gamma\zeta,\\
&d\bar{x}(t)=-\frac{\bar{P}(t)+2a(t)\bar{x}(t)+B(t)}{c}dt, \quad\bar{x}(0)=\bar{x}_0^N,\\
&\bar{Q}(t)=-\frac{\bar{P}(t)+2a(t)\bar{x}(t)+B(t)}{c}.
\end{aligned}
\right.
\end{equation*}
By introducing matrix notation, we obtain an alternative form, which requires the following function system
\begin{equation}
    \frac{d}{dt}
\begin{pmatrix}
B(t)\\
\bar{x}(t)\\

\bar{P}(t)
\end{pmatrix}
=
\begin{pmatrix}
    \frac{2a(t)}{c}&0&\frac{2a(t)}{c}\\
    -\frac{1}{c} &-\frac{2a(t)}{c}&-\frac{1}{c}\\
    \frac{\alpha}{c}&\frac{2\alpha a(t)}{c}&-\alpha+\frac{\alpha}{c}

\end{pmatrix}
\begin{pmatrix}
B(t)\\
\bar{x}(t)\\

\bar{P}(t)
\end{pmatrix}
+
\begin{pmatrix}
    \eta \kappa\\
    0\\
    \alpha \beta

\end{pmatrix},
\label{mean field process evolution}
\end{equation}
to hold simultaneously with the following initial or terminal conditions
\begin{equation}\label{mean field process evolution:boundary value}
    \begin{pmatrix}
B(T),\ 
\bar{x}(0),\ 
\bar{P}(0)
\end{pmatrix}
^\top=
\begin{pmatrix}
-\zeta\gamma,\ 
\bar{x}_0^N,\ 
p_0
\end{pmatrix}
^\top.
\end{equation}
The existence of solutions to (\ref{mean field process evolution}) and (\ref{mean field process evolution:boundary value}) is essential, as it determines whether the consistency requirement in Step 3 can be fulfilled. The main challenge here is:  the boundary conditions are imposed at different time points.

To tackle this issue, we first study solutions with initial conditions specified at the same time point, and establish their properties in \autoref{propo for existence} and \autoref{propo for existence 2}. These results serve as preparatory steps for \autoref{A_2 imply existence}, where we prove that, under \textbf{Assumption} 
$\boldsymbol{(A_1)}$ stated below, the coupled system \eqref{mean field process evolution}–\eqref{mean field process evolution:boundary value} admits a unique solution.


\begin{proposition}\label{propo for existence}
  For $\forall b_0\in\mathbb{R}$, the initial condition $(B(0),\ \bar{x}(0),\ \bar{P}(0))^{\top}= (b_0,\ 
\bar{x}_0^N,\ 
p_0)^\top$
conducts a unique solution $(\phi_{b_0}(t))_{t\in[0,T]}=(B_{b_0}(t),\ \bar{x}_{b_0}(t), \ \bar{P}_{b_0}(t))^\top_{t\in[0,T]}$  that satisfies (\ref{mean field process evolution}).
\end{proposition}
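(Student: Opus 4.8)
The plan is to recognize \eqref{mean field process evolution} as a linear, inhomogeneous, first-order ODE system of the form $\dot{\phi}(t) = M(t)\phi(t) + f$, where $\phi(t) = (B(t),\ \bar{x}(t),\ \bar{P}(t))^\top$, the forcing vector $f = (\eta\kappa,\ 0,\ \alpha\beta)^\top$ is constant, and $M(t)$ is the $3\times 3$ coefficient matrix displayed in \eqref{mean field process evolution}, whose only time dependence enters through $a(t)$. For such a system the existence and uniqueness of a solution on the entire interval $[0,T]$ from any prescribed initial datum is a classical consequence of the Picard--Lindel\"of theorem, provided the coefficients are continuous and bounded; so the whole argument reduces to verifying the regularity of $M(\cdot)$.

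First I would invoke \autoref{a(t) exists theorem} (and its proof, already recalled in the verification theorem) to record that $a(\cdot)$ is the unique solution of the Riccati equation \eqref{Nash HJB eq1}, is continuously differentiable, and is uniformly bounded on $[0,T]$; say $|a(t)| \le C_a$ for some constant $C_a$. Consequently every entry of $M(t)$ is a continuous function of $t$, and $\sup_{t\in[0,T]} \|M(t)\|$ is finite. It follows that the vector field $(t,\phi) \mapsto M(t)\phi + f$ is jointly continuous and, being affine in $\phi$, globally Lipschitz in $\phi$ uniformly in $t$, with Lipschitz constant $L := \sup_{t\in[0,T]}\|M(t)\| < \infty$.

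Then I would apply the global existence--uniqueness theorem for linear (more generally, globally Lipschitz) ODE systems: given the initial condition $\phi(0) = (b_0,\ \bar{x}_0^N,\ p_0)^\top$, there is a unique maximal solution, and since the linear growth bound $\|M(t)\phi+f\| \le L\|\phi\| + \|f\|$ together with a Gr\"onwall estimate shows that $\|\phi(t)\|$ stays bounded on $[0,T]$, finite-time blow-up is precluded and the solution is defined on all of $[0,T]$. Equivalently, writing $\Phi(t)$ for the fundamental matrix solving $\dot{\Phi} = M(t)\Phi$ with $\Phi(0)=I$, the solution is given explicitly by the variation-of-constants formula $\phi_{b_0}(t) = \Phi(t)\phi(0) + \Phi(t)\int_0^t \Phi(s)^{-1} f\, ds$, which manifestly exists and is unique for each $b_0$.

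The step I expect to matter most is the boundedness and continuity of $a(\cdot)$, since this is what guarantees a well-posed coefficient matrix over the whole horizon; but this input has already been secured in \autoref{a(t) exists theorem}. Beyond it, the proposition is a direct application of the standard theory for linear ODEs, so no genuine obstacle remains. The only point requiring care is to note that the split boundary times appearing later in \eqref{mean field process evolution:boundary value} play no role here, since the present statement fixes all three components of $\phi$ at the single time $t=0$; the subtlety of differing boundary times is deferred to \autoref{A_2 imply existence}.
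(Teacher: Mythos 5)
Your proposal is correct and takes essentially the same route as the paper: both arguments reduce the proposition to standard well-posedness theory for linear ODE systems with continuous bounded coefficients, using the boundedness and continuity of $a(\cdot)$ secured in \autoref{a(t) exists theorem} to control the coefficient matrix $M(t)$. The only cosmetic difference is that the paper makes the no-blow-up step explicit via a Frobenius-norm comparison estimate after invoking Peano, then appeals to Picard--Lindel\"of for uniqueness, whereas you cite the global linear theory (Gr\"onwall plus variation of constants) directly --- the same argument in packaged form.
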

 \autoref{propo for existence} shows that once the conditions $\bar{x}(0)=\bar{x}_0^N$ and $\bar{P}(0)=p_0$ are given, the solution to \eqref{mean field process evolution} is uniquely determined by the choice of $B(0)=b_0$ . Therefore, our remaining task is to adjust  the value of $b_0$ so that $B(T)=-\gamma\zeta$ holds.

To this end, we construct the following auxiliary function system, 
\begin{equation}\label{A_1 process}
     \frac{d}{dt}
\begin{pmatrix}
B_1(t)\\
\bar{x}_1(t)\\

\bar{P}_1(t)
\end{pmatrix}
=
\begin{pmatrix}
    \frac{2a(t)}{c}&0&\frac{2a(t)}{c}\\
    -\frac{1}{c} &-\frac{2a(t)}{c}&-\frac{1}{c}\\
    \frac{\alpha}{c}&\frac{2\alpha a(t)}{c}&-\alpha+\frac{\alpha}{c}
\end{pmatrix}
\begin{pmatrix}
B_1(t)\\
\bar{x}_1(t)\\

\bar{P}_1(t)
\end{pmatrix}
,\forall t\in [0,T];
\begin{pmatrix}
B_1(0)\\
\bar{x}_1(0)\\

\bar{P}_1(0)
\end{pmatrix}=
\begin{pmatrix}
1\\
0\\
0
\end{pmatrix}.
\end{equation}

The existence and uniqueness of \eqref{A_1 process} are established below in \autoref{propo for existence 2}. 
\begin{proposition}\label{propo for existence 2}
    There exists a unique solution to (\ref{A_1 process}).
\end{proposition}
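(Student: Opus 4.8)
The plan is to recognize that (\ref{A_1 process}) is simply the homogeneous counterpart of the system (\ref{mean field process evolution})---obtained by deleting the constant forcing term $(\eta\kappa,\,0,\,\alpha\beta)^\top$---equipped with the specific initial datum $(1,0,0)^\top$ prescribed entirely at $t=0$. Consequently, I would treat it as a linear homogeneous initial value problem $\frac{d}{dt}\phi(t)=A(t)\phi(t)$, $\phi(0)=(1,0,0)^\top$, where $A(t)$ is the $3\times 3$ coefficient matrix displayed in (\ref{A_1 process}), and invoke the classical Cauchy--Lipschitz theory for linear systems of ODEs with continuous coefficients. Since the boundary data here are all imposed at the single time $t=0$, this is a genuine initial value problem and the argument is in fact cleaner than, and essentially subsumed by, the reasoning already used for \autoref{propo for existence}.

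The one substantive input is the regularity of the coefficient matrix, which I would establish first. By \autoref{a(t) exists theorem}, the Riccati equation (\ref{Nash HJB eq1}) admits a unique classical solution $a(\cdot)$, so $a(\cdot)$ is continuously differentiable, and in particular continuous, on $[0,T]$; moreover, as recorded in the proof of \autoref{thm:veri}, $a(\cdot)$ is bounded on $[0,T]$. Every entry of $A(t)$ is an affine function of $a(t)$ with constant coefficients (built from $c,\alpha>0$), so $A(\cdot)$ is continuous and bounded on the compact interval $[0,T]$, and hence $L:=\sup_{t\in[0,T]}\|A(t)\|<\infty$ for a fixed matrix norm. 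The right-hand side $f(t,\phi)=A(t)\phi$ is then continuous in $(t,\phi)$ and globally Lipschitz in $\phi$, uniformly in $t$, with constant $L$, because $\|A(t)\phi-A(t)\psi\|\le L\|\phi-\psi\|$.

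With these bounds in hand, the Picard--Lindel\"of theorem yields a unique local solution, and linearity rules out finite-time blow-up: a Gr\"onwall estimate gives $\|\phi(t)\|\le \|\phi(0)\|\,e^{Lt}$, so any local solution extends to all of $[0,T]$, producing a unique solution $(\phi_1(t))_{t\in[0,T]}$ on the full interval. Equivalently, one may note that the state-transition matrix $\Phi(\cdot,0)$ solving $\frac{d}{dt}\Phi(t,0)=A(t)\Phi(t,0)$ with $\Phi(0,0)=I$ is itself well defined and unique by the same theory, whence the desired solution is $\phi_1(t)=\Phi(t,0)(1,0,0)^\top$. I do not anticipate any real obstacle: the system is linear with continuous, bounded coefficients on a compact interval, so the result is a direct application of standard ODE theory, the only nontrivial ingredient being the continuity and boundedness of $a(\cdot)$ furnished by \autoref{a(t) exists theorem} and the proof of \autoref{thm:veri}.
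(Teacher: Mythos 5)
Your proposal is correct and follows essentially the same route as the paper, which likewise treats \eqref{A_1 process} as a linear initial value problem with continuous bounded coefficients (inherited from the regularity of $a(\cdot)$ in \autoref{a(t) exists theorem}), rules out blow-up by a Gr\"onwall-type norm estimate exactly as in the proof of \autoref{propo for existence}, and obtains uniqueness from the Lipschitz property of the coefficient matrix via Picard--Lindel\"of. Your version merely spells out the details the paper delegates to \autoref{propo for existence} (and notes the simplification that the forcing term vanishes), so no substantive difference remains.
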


We now introduce \textbf{Assumption} $\boldsymbol{(A_1)}$ and show that, under this assumption, the coupled system \eqref{mean field process evolution}–\eqref{mean field process evolution:boundary value} indeed admits a solution.

\noindent\textbf{Assumption} $\boldsymbol{(A_1):}$  The solution to  \eqref{A_1 process} satisfies $B_1(T)\neq 0$.

\begin{proposition}\label{A_2 imply existence}
     If  \textbf{Assumption} $\boldsymbol{(A_1)}$  holds, then (\ref{mean field process evolution}), combined with (\ref{mean field process evolution:boundary value}), gives a unique solution. 
\end{proposition}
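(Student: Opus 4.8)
The plan is to exploit the linearity of the system \eqref{mean field process evolution} to reduce the split (two-point) boundary-value problem to a single scalar linear equation in the free parameter $b_0$, whose solvability is governed precisely by the quantity $B_1(T)$ appearing in \textbf{Assumption} $\boldsymbol{(A_1)}$.

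First I would invoke \autoref{propo for existence}: for every $b_0\in\mathbb{R}$, the initial datum $(b_0,\bar{x}_0^N,p_0)^\top$ yields a unique solution $\phi_{b_0}=(B_{b_0},\bar{x}_{b_0},\bar{P}_{b_0})^\top$ of the inhomogeneous system \eqref{mean field process evolution} on $[0,T]$. Because the conditions \eqref{mean field process evolution:boundary value} fix $\bar{x}(0)=\bar{x}_0^N$ and $\bar{P}(0)=p_0$ but leave $B(0)$ free, any solution of the split problem must coincide with some $\phi_{b_0}$ (namely with $b_0=B(0)$), and conversely $\phi_{b_0}$ solves the split problem if and only if its first component satisfies the remaining terminal condition $B_{b_0}(T)=-\gamma\zeta$. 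Thus the whole question collapses to: \emph{for which values of $b_0$ does $B_{b_0}(T)=-\gamma\zeta$ hold?}

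The key step is to make the dependence of $\phi_{b_0}$ on $b_0$ explicit. Let $\phi_0$ denote the solution of \eqref{mean field process evolution} with $b_0=0$, i.e. with initial datum $(0,\bar{x}_0^N,p_0)^\top$, and let $\phi_1=(B_1,\bar{x}_1,\bar{P}_1)^\top$ be the solution of the homogeneous system \eqref{A_1 process} furnished by \autoref{propo for existence 2}. Since $\phi_1$ solves the homogeneous equation and $\phi_0$ solves the inhomogeneous one, the superposition principle suggests $\phi_{b_0}=\phi_0+b_0\,\phi_1$: the right-hand side has initial value $(b_0,\bar{x}_0^N,p_0)^\top$ and, upon differentiating, satisfies \eqref{mean field process evolution}, so by the uniqueness asserted in \autoref{propo for existence} it must equal $\phi_{b_0}$. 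Reading off the first component at $t=T$ then gives the affine relation $B_{b_0}(T)=B_0(T)+b_0\,B_1(T)$, where $B_0(T)$ is the first component of $\phi_0$ at $T$.

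Finally I would solve the scalar equation $B_0(T)+b_0\,B_1(T)=-\gamma\zeta$. Under \textbf{Assumption} $\boldsymbol{(A_1)}$ we have $B_1(T)\neq 0$, so this equation admits the unique solution $b_0=\bigl(-\gamma\zeta-B_0(T)\bigr)/B_1(T)$, producing exactly one $\phi_{b_0}$ that meets all of \eqref{mean field process evolution:boundary value}; this yields existence and uniqueness simultaneously. I do not expect a genuine computational obstacle here: the real content is the conceptual observation that the sensitivity $\partial B_{b_0}(T)/\partial b_0$ equals $B_1(T)$, so that \textbf{Assumption} $\boldsymbol{(A_1)}$ is precisely the nondegeneracy condition making the ``shooting parameter'' $b_0$ uniquely determined. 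The remaining verifications — the superposition identity and the reduction of the two-point problem to the one-parameter family $\{\phi_{b_0}\}$ — are routine consequences of linearity together with \autoref{propo for existence} and \autoref{propo for existence 2}.
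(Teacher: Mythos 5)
Your proposal is correct and follows essentially the same route as the paper: both reduce the two-point problem to a shooting parameter $b_0$, establish the superposition identity $\phi_{b_0}=\phi_{0}+b_0\,\phi_1$ via linearity and the uniqueness statements of \autoref{propo for existence} and \autoref{propo for existence 2}, and then solve the affine equation $B_0(T)+b_0\,B_1(T)=-\gamma\zeta$, which Assumption $\boldsymbol{(A_1)}$ renders uniquely solvable. The only cosmetic difference is directional — the paper verifies that $(\phi_{b_0}-\phi_0)/b_0$ solves \eqref{A_1 process}, while you build $\phi_0+b_0\phi_1$ and match initial data — which is the same argument.
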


We further impose an assumption that constrains the magnitudes of the elements in the sets  $\{\sigma_i\}_{1\le i\le N} $ and $\{x_0^i\}_{1\le i\le N}$. 

\vspace{2mm}
\noindent\textbf{Assumption}
    $\boldsymbol{(A_2):}$ The sets
$\{x_0^i\}_{i=1}^N$ and 
$\{\sigma_i\}_{i=1}^N$ are respectively defined on two fixed compact sets that are independent of $N$.

Now, we propose the main theorem for this section.
\begin{theorem}\label{Nash main theorem}
  Assume that \textbf{Assumption} $\boldsymbol{(A_1)}$ and $\boldsymbol{(A_2)}$ hold, and the unique solution to (\ref{mean field process evolution}) and (\ref{mean field process evolution:boundary value}) is given by $(B(t),  \
\bar{x}(t), \  
\bar{P}(t))^\top$,
then the following system
\begin{equation}\label{Nash process}
\left\{
\begin{aligned}
 &d\hat{P}_t=\alpha (\beta -\hat{Q}_t-\hat{P}_t)dt, \quad  \hat{P}_0=p_0, \\
 & \hat{v}_t^i=-\frac{\bar{P}(t)+2a(t)\hat{X}_t^i+B(t)}{c}, \\ 
 & d \hat{X}_t^i=\hat{v}_t^i dt+\sigma_i dW_t^i, \quad \hat{X}_0^i=x_0^i, \\
 & \hat{Q}_t=\frac{1}{N}\sum_{i=1}^{N} \hat{v}_t^i.
\end{aligned}
\right.
\end{equation}
is an $\epsilon$-Nash equilibrium. Moreover, specific calculations show that under the proposed strategies $\{\hat v_t^i\}_{i=1}^N$, $\epsilon_N=O\Big(\frac{1}{\sqrt{N}}\Big)$ in \eqref{Nash Obj}.
\end{theorem}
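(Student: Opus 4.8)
The plan is to verify the two inequalities in \eqref{Nash Obj} separately. The right-hand inequality is immediate, since $\hat v^i$ is itself an admissible competitor in the infimum defining $\inf_{v^i}J_i(v^i,\hat{\boldsymbol v}^{-i})$. The entire content lies in the left-hand inequality, which I would establish through the chain
$$J_i(\hat v^i,\hat{\boldsymbol v}^{-i})\le \bar J_i(\hat v^i;\bar P)+O(\tfrac{1}{\sqrt N})\le \bar J_i(v^i;\bar P)+O(\tfrac1{\sqrt N})\le J_i(v^i,\hat{\boldsymbol v}^{-i})+O(\tfrac1{\sqrt N}),$$
valid for every admissible deviation $v^i$, after which taking the infimum over $v^i$ yields $\epsilon_N=O(1/\sqrt N)$. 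The middle inequality is exactly the optimality of $\hat v^i$ for the auxiliary problem \eqref{barJ} already furnished by \autoref{thm:veri}; the two outer inequalities are the finite-versus-auxiliary comparisons that must be quantified.

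The crucial observation for those comparisons is that, because the state dynamics \eqref{Xi} are decoupled across agents and the feedback \eqref{Optimal strategy for Nash} uses the deterministic $\bar P$, agent $i$'s state process driven by a fixed control $v^i$ is pathwise identical in the $N$-player system and in the auxiliary problem. Hence the running and terminal costs agree and the only discrepancy is the price term, so that $J_i(v^i,\hat{\boldsymbol v}^{-i})-\bar J_i(v^i;\bar P)=\mathbb E\!\int_0^T(P_t-\bar P(t))\,v_t^i\,dt$. I would then decompose $P_t-\bar P(t)$, which solves the linear equation $d(P_t-\bar P(t))=-\alpha\big((Q_t-\bar Q(t))+(P_t-\bar P(t))\big)dt$, into a part generated by the empirical fluctuation $\hat Q_t-\bar Q(t)$ and a part generated by agent $i$'s own $O(1/N)$ perturbation $\tfrac1N(v_t^i-\hat v_t^i)$. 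Setting $D_t:=\tfrac1N\sum_{j}\hat X_t^j-\bar x(t)$, one checks that $\hat Q_t-\bar Q(t)=-\tfrac{2a(t)}{c}D_t$ and that $D_t$ solves an Ornstein--Uhlenbeck--type equation driven by the aggregated small noise $\tfrac1N\sum_j\sigma_j\,dW_t^j$; under \textbf{Assumption} $\boldsymbol{(A_2)}$ the It\^o isometry gives $\mathbb E[D_t^2]\le \tfrac{C}{N^2}\sum_j\sigma_j^2=O(1/N)$ uniformly in $t$, so $\|\hat Q-\bar Q\|=O(1/\sqrt N)$, and smoothing through the price ODE preserves this rate.

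It remains to bound $\mathbb E\!\int_0^T(P_t-\bar P(t))v_t^i\,dt$ by $O(1/\sqrt N)$, and here the Cauchy--Schwarz estimate only closes if $\|v^i\|$ is bounded uniformly in $N$. I would therefore first reduce to deviations satisfying $J_i(v^i,\hat{\boldsymbol v}^{-i})\le J_i(\hat v^i,\hat{\boldsymbol v}^{-i})=O(1)$ (all others satisfy \eqref{Nash Obj} trivially) and prove a coercivity bound $\|v^i\|\le M$ with $M$ independent of $N$. This is the delicate point, because the indefinite term $\int_0^T P_tv_t^i\,dt$ can be driven very negative and threatens coercivity; I would control it by playing $\int_0^T\bar P(t)v_t^i\,dt\le \|\bar P\|_\infty\sqrt T\,\|v^i\|$ against the strictly convex $\tfrac{c}{2}\|v^i\|^2$, absorbing the $O(1/\sqrt N)$ fluctuation contribution, and noting that agent $i$'s self-feedback produces a quadratic form in $v^i$ whose coefficient is $O(1/N)$ and is therefore dominated by $\tfrac{c}{2}\|v^i\|^2$ once $N$ is large; the Positive Real Lemma and the standing parameter restrictions are what guarantee the requisite coercivity margin. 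With $\|v^i\|\le M$ in hand, the self-feedback term contributes $O(1/N)$ and the fluctuation term contributes $O(1/\sqrt N)$ to the price integral, and the same argument applied to $\hat v^i$ (whose norm is bounded a priori) controls the equilibrium comparison.

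I expect the coercive a-priori bound on profitable deviations to be the main obstacle: it is precisely the step where the positive correlation between the cost and the price--demand product must be defused, and it is what forces the parameter restrictions and the Positive Real Lemma into the argument. Once uniform boundedness of $\|v^i\|$ is secured, the remaining estimates are routine applications of Gronwall's inequality, the It\^o isometry, and Cauchy--Schwarz, and assembling the chain above delivers $\epsilon_N=O(1/\sqrt N)$.
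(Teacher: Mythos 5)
Your proposal is correct and follows essentially the same route as the paper: your three-inequality chain is exactly the paper's decomposition $J_i(v^i,\hat{\boldsymbol v}^{-i})-J_i(\hat v^i,\hat{\boldsymbol v}^{-i})=I_1+I_2-I_3$ with $I_1\ge 0$ from the verification theorem and $I_2,I_3=O(1/\sqrt N)$ from the price estimates $\|\hat P-\bar P\|=O(1/\sqrt N)$ (driven by the averaged-noise fluctuation $D_t$, which the paper bounds via Gronwall rather than your equivalent It\^o-isometry computation) and $\|P-\hat P\|=O(1/N)$, preceded by the same coercivity-based reduction to deviations with $\|v^i\|=O(1)$. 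The only cosmetic difference is in the coercivity step, where your direct Cauchy--Schwarz domination of the $O(1/N)$ self-feedback quadratic form would in fact suffice and is slightly more elementary than the paper's Positive Real Lemma argument (Lemma \ref{PRL for Nash}), which you also invoke.
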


To prove \autoref{Nash main theorem}, we will need three auxiliary lemmas. As a first step, in order to derive uniform bounds for the norms of the state processes under the optimal control and for the quantities $J_i(\hat v^i,\hat{\boldsymbol{ v}}^{-i}),\ \forall i=1,2,\ldots,N$, we introduce \autoref{Proposed control bounded lemma for Nash}.

\begin{lemma}\label{Proposed control bounded lemma for Nash}
  There exists a positive constant $K'$, independent of $N$ and $i$, such that
   \begin{equation*}
   \left\{
       \begin{aligned}
       & \mathbb{E}\left[\sup_{0\leq t\leq T }|\hat{X}_t^i|^2\right]\le K',\  \quad   \mathbb{E}\left[\int_0^T (\hat{v}_t^i)^2 dt\right] \le T \mathbb{E}\left[\sup_{0\le t\le T }(\hat{v}_t^i)^2 dt\right]\le K',\quad i=1,2,\ldots,N,  \\
        &  \mathbb{E}\left[\sup_{0\le t\le T }|\overline{\hat{X_t}}|^2\right]\le K',\ \quad  \mathbb{E}\left[\sup_{0\le t\le T }\left|\frac{1}{N}\sum_{k\neq i} \hat{v}_t^k\right|^2\right]\le K'\ ,\quad i=1,2,\ldots,N, \\
       & \mathbb{E}\left[\sup_{0\le t\le T }|{\hat{P}_t}|^2\right]\le K',\  \quad \mathbb{E}\left[\int_0^T \hat{Q}_t^2 dt\right] \le T\mathbb{E}\left[\sup_{0\le t\le T }|\hat{Q}_t|^2\right]\le K', \\
       &  |J_i(\hat{v}^i,\hat{\boldsymbol{v}}^{-i})|\le K',\quad  i=1,2,\ldots,N, 
       \end{aligned}
       \right.
   \end{equation*}
where $\overline{\hat{X}}_t$ is the average of $\hat{X}_t^i$, from $i=1$ to $N$.
\end{lemma}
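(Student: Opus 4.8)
The plan is to establish all the stated bounds as standard a priori moment estimates, exploiting the fact that the feedback in \eqref{Nash process} is driven by the \emph{deterministic} mean-field functions $a(t)$, $B(t)$ and $\bar{P}(t)$ rather than by the realized price $\hat{P}_t$. Consequently the state equations for $\hat{X}^i$ decouple: each $\hat{X}^i$ solves a linear SDE whose coefficients are deterministic and bounded and whose only source of randomness is its own Brownian motion $W^i$. Before any estimate I would record the two boundedness facts that drive everything: first, $a(t)$ is bounded on $[0,T]$ by \autoref{a(t) exists theorem}, and $B(t),\bar{P}(t)$ are continuous (indeed $C^1$) on the compact interval $[0,T]$ by \autoref{A_2 imply existence}, hence bounded; second, by \textbf{Assumption} $\boldsymbol{(A_2)}$ the initial data $\{x_0^i\}$ and volatilities $\{\sigma_i\}$ lie in fixed compact sets, so $\sup_i|x_0^i|$ and $\sup_i\sigma_i$ are finite and independent of $N$. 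The whole point of the lemma is that the resulting constant $K'$ is uniform in both $i$ and $N$; keeping track of this uniformity is the only real bookkeeping burden.

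The order of the estimates would be as follows. Writing $\hat{X}_t^i = x_0^i + \int_0^t \hat{v}_s^i\,ds + \sigma_i W_t^i$ and using the explicit feedback to get the linear-growth bound $|\hat{v}_s^i|\le C_1(1+|\hat{X}_s^i|)$, with $C_1$ depending only on the sup-norms of $a,B,\bar{P}$ and on $c$, I would square, take $\sup_{t\le T}$ and expectation, bound the drift term by Cauchy--Schwarz (so that $|\int_0^t \hat{v}_s^i\,ds|^2 \le T\int_0^T (\hat{v}_s^i)^2\,ds$) and the martingale term by Doob's inequality ($\mathbb{E}[\sup_{t\le T}|W_t^i|^2]\le 4T$), and close the estimate with Gronwall's inequality. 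This yields $\mathbb{E}[\sup_{t\le T}|\hat{X}_t^i|^2]\le K'$ uniformly in $i$ and $N$. The bound on $\hat{v}^i$ is then immediate from the explicit formula $\hat{v}_t^i=-(\bar{P}(t)+2a(t)\hat{X}_t^i+B(t))/c$, which gives $\mathbb{E}[\sup_{t\le T}(\hat{v}_t^i)^2]\le K'$ and hence $\mathbb{E}[\int_0^T (\hat{v}_t^i)^2\,dt]\le T\mathbb{E}[\sup_{t\le T}(\hat{v}_t^i)^2]\le K'$. The three averaged quantities $\overline{\hat{X}}_t$, $\frac{1}{N}\sum_{k\ne i}\hat{v}_t^k$ and $\hat{Q}_t$ are then controlled by convexity of $x\mapsto x^2$ (Jensen / Cauchy--Schwarz), for instance $\sup_t|\overline{\hat{X}}_t|^2\le\frac{1}{N}\sum_i\sup_t|\hat{X}_t^i|^2$, so their second moments inherit the bound $K'$ directly from the per-agent estimates.

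Next I would bound the realized price. Solving the linear ODE in \eqref{Nash process} pathwise gives $\hat{P}_t = e^{-\alpha t}p_0 + \alpha\int_0^t e^{-\alpha(t-s)}(\beta-\hat{Q}_s)\,ds$, whence $\sup_{t\le T}|\hat{P}_t|\le |p_0|+\alpha\beta T+\alpha\int_0^T|\hat{Q}_s|\,ds$; squaring, applying Cauchy--Schwarz to the last integral and invoking the already-established bound on $\mathbb{E}[\int_0^T \hat{Q}_s^2\,ds]$ yields $\mathbb{E}[\sup_{t\le T}|\hat{P}_t|^2]\le K'$. Finally, for the cost I would insert the established $L^2$ bounds into \eqref{LQObject}: the quadratic terms $\frac{\eta}{2}(\hat{X}_t^i-\kappa)^2$, $\frac{c}{2}(\hat{v}_t^i)^2$ and $\frac{\gamma}{2}(\hat{X}_T^i-\zeta)^2$ are controlled by the moment bounds on $\hat{X}^i$ and $\hat{v}^i$, and the cross term is handled by $|\hat{P}_t\hat{v}_t^i|\le\frac{1}{2}(\hat{P}_t^2+(\hat{v}_t^i)^2)$, giving $|J_i(\hat{v}^i,\hat{\boldsymbol{v}}^{-i})|\le K'$.

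The conceptual point worth flagging — and the step I would treat most carefully — is that for the cost bound I do \emph{not} need the coercivity that causes trouble elsewhere in the paper: since I seek an \emph{absolute} bound on $|J_i|$, the possibly negative cross term $\hat{P}_t\hat{v}_t^i$ is dominated in absolute value by $\frac{1}{2}(\hat{P}_t^2+(\hat{v}_t^i)^2)$, so no lower bound on the functional is required here. The genuine (if routine) obstacle is ensuring that every constant produced along the way — in the Gronwall step, in the convexity bounds for the averages, and in the cost estimate — depends only on the model parameters, the compact-set bounds from \textbf{Assumption} $\boldsymbol{(A_2)}$, and $T$, and never on $N$ or on the particular index $i$; this uniformity is precisely what makes the single constant $K'$ in the statement legitimate.
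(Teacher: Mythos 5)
Your proposal is correct and follows essentially the same route as the paper's proof: a Gronwall estimate for $\mathbb{E}[\sup_{t\le T}|\hat{X}_t^i|^2]$ using the boundedness of $a$, $B$, $\bar{P}$ and \textbf{Assumption} $\boldsymbol{(A_2)}$, the feedback formula to bound $\hat{v}^i$ and hence $\hat{Q}$, the explicit exponential formula for $\hat{P}$, and an absolute bound on the cross term $\hat{P}_t\hat{v}_t^i$ to control $|J_i|$ without any coercivity. The only cosmetic differences are that you bound the averages $\overline{\hat{X}}_t$ and $\frac{1}{N}\sum_{k\neq i}\hat{v}_t^k$ via Jensen's inequality applied to the per-agent estimates, whereas the paper derives the SDE for $\overline{\hat{X}}$ directly and handles the $k\neq i$ average by viewing the remaining $N-1$ agents as a population in their own right, and that you use pointwise AM--GM for the cross term where the paper uses Cauchy--Schwarz in $L^2$ --- all equivalent in substance and uniformity in $i$ and $N$.
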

The proofs of all lemmas  have been left to \autoref{sec:appendix}.

To show that the proposed strategy profile constitutes an 
$\epsilon$-Nash equilibrium, we demonstrate that for any agent $i$, its expected cost $J_i(\hat{v}^i,\hat{\boldsymbol{v}}^{-i})$ cannot be significantly reduced by a unilateral deviation. Specifically, for agent $i \ (\forall i = 1, \ldots, N)$, consider an arbitrary admissible control $v^i\in \mathcal{A}_i$, while other agents continue using  $\hat{\boldsymbol{v}}^{-i}$. Under such a deviation, the commodity process of agent $i$ evolve as follows:
\begin{equation*}
    d X_t^i= v_t^idt+\sigma_i dW_t^i,\quad X_0^i=x_0^i,
\end{equation*}
and the price dynamics become
\begin{equation*}
   d{P}_t=\alpha \bigg(\beta -\frac{v_t^i}{N}-\frac{1}{N} \sum_{k\neq i} \hat{v}_t^k-{P}_t\bigg)dt, \quad  {P}_0=p_0.
\end{equation*}
We aim to compare $J_i(\hat{v}^i,\hat{\boldsymbol{v}}^{-i})$ with $J_i(v^i,\hat{\boldsymbol{v}}^{-i})$ and establish \eqref{Nash Obj} with $\epsilon_N=O(\frac{1}{\sqrt{N}})$, or equivalently, 
\begin{equation}\label{equiobj}
    J_i(v^i, \hat{\boldsymbol{v}}^{-i})- J_i(\hat{v}^i, \hat{\boldsymbol{v}}^{-i})\ge O\bigg(\frac{1}{\sqrt{N}}\bigg),\quad\forall \ v^i \in \mathcal{A}_i.
\end{equation}

To establish \eqref{equiobj}, we require a coercivity condition, whose precise formulation is given in \autoref{Nash coercive lemma}. Before presenting it, we first introduce \autoref{PRL for Nash}, which will streamline the proof of the coercivity result.
\begin{lemma}\label{PRL for Nash}
    For any $N\in \mathbb{N}^+$ and $v\in \mathcal{A}_i$, we construct the process
\begin{equation*} \label{lemma 2.2.2 statement}
    \frac{d}{dt}
\begin{pmatrix}
    X_t^*\\
    P_t^*
\end{pmatrix}
=
\begin{pmatrix}
    0 &0\\
    0 &{-\alpha}
\end{pmatrix}
\begin{pmatrix}
    X_t^*\\
    P_t^*
\end{pmatrix}
+
\begin{pmatrix}
    1\\
    -\frac{\alpha}{N}
\end{pmatrix}
v_t,\ \quad   X_0^*=0,\  P_0^*=0.
\end{equation*}

Then, for any given $ \epsilon_1^*>0$,  $\epsilon_2^* >0$ and $N\ge\frac{\sqrt{2}+1}{2{\epsilon_2}^*}$, we have 
\begin{equation}\label{leq:XPstar}
   \mathbb{E}\left[ \int_0^T v_t (\epsilon_1^{*}X_t^*+P_t^*+{\epsilon}_2^* v_t)dt\right]\ge 0.
\end{equation}
\end{lemma}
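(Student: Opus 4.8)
The plan is to recognize \eqref{leq:XPstar} as a passivity (positive-realness) statement for the given linear system and to prove it by exhibiting an explicit quadratic storage function, which is the time-domain content of the Positive Real Lemma. Two preliminary observations make this elementary. First, the defining system carries no Brownian term: for each fixed $\omega$ the pair $(X^*_t,P^*_t)$ is the pathwise solution of an ordinary linear ODE driven by the realization $t\mapsto v_t(\omega)$, so no stochastic integral appears and the whole argument is deterministic calculus performed $\omega$ by $\omega$, with the expectation inserted only at the last step; the condition $v\in\mathcal A$ guarantees $\int_0^T v_t^2\,dt<\infty$ almost surely, so $X^*,P^*$ are absolutely continuous and every manipulation below is licit. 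Second, solving the ODEs gives $\dot X^*_t=v_t$ with $X^*_0=0$, and $\dot P^*_t=-\alpha P^*_t-\frac{\alpha}{N}v_t$ with $P^*_0=0$. Writing $y_t:=\epsilon_1^*X^*_t+P^*_t+\epsilon_2^* v_t$ for the bracket in \eqref{leq:XPstar}, the only term that can carry the wrong sign is the price contribution $\int_0^T v_t P^*_t\,dt$, and it is here that the lower bound on $N$ must be spent.

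I would take the candidate storage function
\begin{equation*}
V(X^*,P^*)=\frac{\epsilon_1^*}{2}(X^*)^2+\frac{N}{2\alpha}(P^*)^2\ge 0,
\end{equation*}
whose time derivative along the trajectory is
\begin{equation*}
\dot V_t=\epsilon_1^*X^*_t v_t+\frac{N}{\alpha}P^*_t\Big(-\alpha P^*_t-\frac{\alpha}{N}v_t\Big)=\epsilon_1^*X^*_t v_t-N(P^*_t)^2-P^*_t v_t.
\end{equation*}
Subtracting this from the supply rate $v_t y_t=\epsilon_1^*X^*_t v_t+P^*_t v_t+\epsilon_2^*v_t^2$ cancels the indefinite terms and leaves
\begin{equation*}
v_t y_t-\dot V_t=\epsilon_2^*v_t^2+2P^*_t v_t+N(P^*_t)^2,
\end{equation*}
a quadratic in $v_t$ whose minimum over $v_t$ equals $\big(N-\tfrac{1}{\epsilon_2^*}\big)(P^*_t)^2$. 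Hence $v_t y_t-\dot V_t\ge 0$ pointwise as soon as $N\ge \frac{1}{\epsilon_2^*}$.

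Integrating this dissipation inequality over $[0,T]$ and using $X^*_0=P^*_0=0$ gives, pathwise,
\begin{equation*}
\int_0^T v_t y_t\,dt\ge \int_0^T \dot V_t\,dt=V(X^*_T,P^*_T)-V(0,0)=V(X^*_T,P^*_T)\ge 0,
\end{equation*}
and taking expectations yields \eqref{leq:XPstar}. Since $\frac{\sqrt2+1}{2}>1$, the hypothesis $N\ge \frac{\sqrt2+1}{2\epsilon_2^*}$ implies $N\ge \frac{1}{\epsilon_2^*}$, so it is more than sufficient. The main obstacle is the negative price term: a direct Cauchy--Schwarz estimate of $\int_0^T v_t P^*_t\,dt$ against $\int_0^T v_t^2\,dt$ bleeds factors of $\alpha$ and $T$ and fails to produce a clean threshold, so the crux is to absorb it exactly, which is precisely what the weighting $\frac{N}{2\alpha}$ on $(P^*)^2$ in $V$ accomplishes; the saving factor is the $\frac{1}{N}$ built into the $P^*$-dynamics, which forces the minimum of the residual quadratic to be nonnegative once $N$ is large. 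Equivalently, one may read off the frequency-domain positive-real condition $\mathrm{Re}\,H(j\omega)=\epsilon_2^*-\frac{\alpha^2}{N(\alpha^2+\omega^2)}\ge 0$ for the transfer function $H(s)=\frac{\epsilon_1^*}{s}-\frac{\alpha/N}{s+\alpha}+\epsilon_2^*$, which again reduces to $N\ge \frac{1}{\epsilon_2^*}$.
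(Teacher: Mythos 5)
Your proof is correct, and it reaches the same conclusion by a genuinely more self-contained route than the paper. The paper invokes the Positive Real Lemma of Boyd et al.\ as a black box: it fixes the Lyapunov matrix $P_1=\diag\bigl(\epsilon_1^*,\tfrac{1}{2\alpha\epsilon_2^*}\bigr)$, assembles the associated LMI, and reduces $L\preceq O$ to the determinant condition $\det\begin{pmatrix}\frac{1}{\epsilon_2^*} & \frac{1}{2\epsilon_2^*N}+1\\ \frac{1}{2\epsilon_2^*N}+1 & 2\epsilon_2^*\end{pmatrix}\ge 0$, which is exactly $N\ge\frac{\sqrt 2+1}{2\epsilon_2^*}$ --- so the threshold in the statement is an artifact of that particular choice of weights. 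You instead supply the time-domain content of the same lemma directly: the storage function $V=\frac{\epsilon_1^*}{2}(X^*)^2+\frac{N}{2\alpha}(P^*)^2$ (equivalently, the $N$-dependent Lyapunov matrix $\diag(\epsilon_1^*,\tfrac{N}{\alpha})$), a completion of the square showing $v_ty_t-\dot V_t=\epsilon_2^*v_t^2+2P_t^*v_t+N(P_t^*)^2\ge\bigl(N-\tfrac{1}{\epsilon_2^*}\bigr)(P_t^*)^2$, and integration from the zero initial state; I have checked each algebraic step and the pathwise justification (absolute continuity of $X^*,P^*$ under $\int_0^T v_t^2\,dt<\infty$ a.s.) and all are sound. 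This buys three things the paper's version does not make visible: the argument needs no external citation; the inequality holds $\omega$-by-$\omega$ before any expectation is taken; and the sufficient condition weakens to $N\ge\frac{1}{\epsilon_2^*}$, strictly better than $N\ge\frac{\sqrt2+1}{2\epsilon_2^*}$, which with the choice $\epsilon_2=\frac{c}{8}$ in the coercivity lemma would relax the requirement $N>\frac{4(\sqrt 2+1)}{c}$ there to $N\ge\frac{8}{c}$. Your frequency-domain cross-check via $\mathrm{Re}\,H(j\omega)=\epsilon_2^*-\frac{\alpha^2}{N(\alpha^2+\omega^2)}$ confirms that $N\ge\frac{1}{\epsilon_2^*}$ is also the natural positive-realness threshold, i.e., your constant is the sharp one for this method. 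What the paper's LMI formulation buys in exchange is brevity given the citation and stylistic uniformity with the analogous lemma in the cooperative case, where the same matrix template is reused.
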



 \begin{lemma}\label{Nash coercive lemma}
     There exists a constant $C_0$, which is independent of $N$ and $i$, such that the coercive condition
\begin{equation*}
    J_i(v^i,\hat{\boldsymbol{v}}^{-i})\ge \mathbb{E}\left[ \int_0^T \Big(\frac{\eta}{4} (X_t^i-\kappa)^2+\frac{c}{4}(v_t^i)^2\Big)dt\right]+C_0
\end{equation*}
is satisfied for any $N>\frac{4(\sqrt{2}+1)}{c}$.
 \end{lemma}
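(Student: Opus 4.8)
The plan is to discard the nonnegative terminal cost (since $\gamma\ge 0$ forces $\Psi\ge 0$) and then show that the troublesome cross term $\mathbb{E}\big[\int_0^T P_t v_t^i\,dt\big]$ can be absorbed into half of the quadratic running cost $\frac{c}{2}(v_t^i)^2$, up to an additive constant independent of $N$. The decisive structural idea is to split the price along agent $i$'s deviation: write $P_t=\tilde P_t+P_t^{\ast}$, where $\tilde P$ solves the price ODE driven only by the frozen controls $\hat v^k$, $k\ne i$ (hence is independent of $v^i$), and $P^{\ast}$ solves $dP_t^{\ast}=-\alpha P_t^{\ast}\,dt-\frac{\alpha}{N}v_t^i\,dt$, $P_0^{\ast}=0$. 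By linearity of the price dynamics this is exactly the $P^{\ast}$ of \autoref{PRL for Nash} with $v=v^i$, and the accompanying state $X_t^{\ast}=\int_0^t v_s^i\,ds$ is the controlled displacement of agent $i$.

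For the feedback part, I would invoke \autoref{PRL for Nash}: for any $\epsilon_1^{\ast},\epsilon_2^{\ast}>0$ and $N\ge\frac{\sqrt2+1}{2\epsilon_2^{\ast}}$,
\[
\mathbb{E}\left[\int_0^T P_t^{\ast}v_t^i\,dt\right]\ge-\epsilon_1^{\ast}\,\mathbb{E}\left[\int_0^T v_t^i X_t^{\ast}\,dt\right]-\epsilon_2^{\ast}\,\mathbb{E}\left[\int_0^T (v_t^i)^2\,dt\right].
\]
Since $\dot X_t^{\ast}=v_t^i$ and $X_0^{\ast}=0$, one has $\int_0^T v_t^i X_t^{\ast}\,dt=\tfrac12(X_T^{\ast})^2\ge0$, and by Cauchy--Schwarz $(X_T^{\ast})^2\le T\int_0^T (v_t^i)^2\,dt$, so the feedback part is bounded below by $-(\tfrac{\epsilon_1^{\ast}T}{2}+\epsilon_2^{\ast})\,\mathbb{E}\big[\int_0^T (v_t^i)^2\,dt\big]$. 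For the exogenous part, Young's inequality gives, for any $\delta>0$,
\[
\mathbb{E}\left[\int_0^T \tilde P_t v_t^i\,dt\right]\ge-\delta\,\mathbb{E}\left[\int_0^T (v_t^i)^2\,dt\right]-\frac{1}{4\delta}\,\mathbb{E}\left[\int_0^T \tilde P_t^2\,dt\right],
\]
where the last expectation is $N$-uniform: solving the linear ODE for $\tilde P$ by variation of constants and inserting the uniform bound on $\mathbb{E}\big[\sup_t|\frac1N\sum_{k\ne i}\hat v_t^k|^2\big]$ from \autoref{Proposed control bounded lemma for Nash} controls $\mathbb{E}\big[\int_0^T \tilde P_t^2\,dt\big]$ uniformly in $N$ and $i$.

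Combining the two estimates yields
\[
\mathbb{E}\left[\int_0^T P_t v_t^i\,dt\right]\ge-\Big(\tfrac{\epsilon_1^{\ast}T}{2}+\epsilon_2^{\ast}+\delta\Big)\mathbb{E}\left[\int_0^T (v_t^i)^2\,dt\right]-\frac{1}{4\delta}\,\mathbb{E}\left[\int_0^T \tilde P_t^2\,dt\right].
\]
I would then calibrate the free parameters so that the coefficient of $\mathbb{E}\big[\int_0^T(v_t^i)^2\,dt\big]$ equals exactly $\frac{c}{4}$: take $\epsilon_2^{\ast}=\frac{c}{8}$, which makes the admissibility threshold $N\ge\frac{\sqrt2+1}{2\epsilon_2^{\ast}}=\frac{4(\sqrt2+1)}{c}$ coincide with the hypothesis $N>\frac{4(\sqrt2+1)}{c}$, together with $\delta=\frac{c}{16}$ and $\epsilon_1^{\ast}=\frac{c}{8T}$, so that $\frac{\epsilon_1^{\ast}T}{2}+\epsilon_2^{\ast}+\delta=\frac{c}{4}$. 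Dropping $\Psi\ge0$ and retaining $\frac{\eta}{2}(X_t^i-\kappa)^2\ge\frac{\eta}{4}(X_t^i-\kappa)^2$ then gives the claim with $C_0:=-\frac{1}{4\delta}\sup_N\mathbb{E}\big[\int_0^T \tilde P_t^2\,dt\big]$, finite and independent of $N$. The main obstacle is precisely this cross term $P_t v_t^i$, whose sign is ``wrong'' and which, through the self-feedback $P^{\ast}$, could a priori be driven arbitrarily negative by inflating $\|v^i\|$; the key is that this feedback enters only at order $1/N$, so for large $N$ it is dominated by $\frac{c}{2}(v_t^i)^2$. This domination is quantified by \autoref{PRL for Nash}, and the delicate point is to choose $\epsilon_1^{\ast},\epsilon_2^{\ast},\delta$ consistently so that the PRL threshold matches the stated $N>\frac{4(\sqrt2+1)}{c}$, the residual coercive coefficient is a clean $\frac{c}{4}$, and the sole surviving additive term is genuinely $N$-uniform.
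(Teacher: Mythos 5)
Your proof is correct, and its skeleton coincides with the paper's own proof: the same splitting of the price along agent $i$'s deviation, $P_t=\tilde P_t+P_t^{*}$ (the paper's $P_t'+P_t''$), the same invocation of \autoref{PRL for Nash} with $\epsilon_2^{*}=\frac{c}{8}$ so that the admissibility threshold becomes exactly $\frac{4(\sqrt2+1)}{c}$, and the same Young-inequality treatment of the exogenous part $\tilde P$, made uniform in $N$ and $i$ via the bound on $\mathbb{E}\big[\sup_t|\frac1N\sum_{k\neq i}\hat v_t^k|^2\big]$ from \autoref{Proposed control bounded lemma for Nash}. Where you genuinely diverge is in handling the cross term $-\epsilon_1^{*}\,\mathbb{E}\big[\int_0^T v_t^i X_t^{*}\,dt\big]$ produced by the Positive Real Lemma: the paper absorbs it by completing a square against a sliver $\epsilon_3$ of the state cost, which forces the auxiliary estimate $\mathbb{E}[(X_t^i-\kappa)^2]\ge\frac12\mathbb{E}[(\dot X_t^i)^2]+O(1)$ (routing through the initial datum and the Brownian term) and the calibration $\epsilon_1^2=\epsilon_3\epsilon_4$ with $\epsilon_3<\frac{\eta}{4}$, $\epsilon_4+\epsilon_5<\frac{c}{8}$. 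You instead exploit the exact pathwise identity $\int_0^T v_t^i X_t^{*}\,dt=\frac12(X_T^{*})^2\ge 0$ — valid since $X^{*}$ is absolutely continuous with derivative $v^i\in L^2[0,T]$ a.s.\ for admissible controls — together with $(X_T^{*})^2\le T\int_0^T (v_t^i)^2\,dt$, folding the cross term entirely into the $(v_t^i)^2$ coefficient. This buys a visibly simpler argument: no $\epsilon_3,\epsilon_4$ bookkeeping, no stochastic estimate relating $X_t^i$ to $\dot X_t^i$, fully explicit constants ($\epsilon_1^{*}=\frac{c}{8T}$, $\delta=\frac{c}{16}$, summing with $\epsilon_2^{*}$ to exactly $\frac{c}{4}$), and the state cost is left untouched, so you in fact retain the full coefficient $\frac{\eta}{2}$ and the claimed $\frac{\eta}{4}$ is immediate. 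Two minor remarks: your $C_0=-\frac{1}{4\delta}\sup_N\mathbb{E}\big[\int_0^T\tilde P_t^2\,dt\big]$ should be read under the paper's $O(1)$ convention — the cited uniform bound holds for $N$ large, while for the finitely many remaining admissible $N$ each expectation is finite anyway because each $\|\hat v^k\|<\infty$; and since $\int_0^T v_t^i X_t^{*}\,dt\ge 0$ holds pathwise while the PRL threshold depends only on $\epsilon_2^{*}$, you could even let $\epsilon_1^{*}\downarrow 0$ and dispense with the Cauchy--Schwarz step altogether, freeing the margin $\frac{c}{8}$ for $\delta$.
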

We now return to our proof of  \autoref{Nash main theorem}.
\begin{proof}[ Proof of  \autoref{Nash main theorem}]
According to  \autoref{Proposed control bounded lemma for Nash}, we get
\begin{equation*}
    |J_i(\hat{v}^i,\hat{\boldsymbol{v}}^{-i})| \le K'.
\end{equation*}
 For all $ N>\frac{4(\sqrt{2}+1)}{c} $, if $  \frac{c}{4} \|v^i\|^2+C_0\ge K'$, then by   \autoref{Nash coercive lemma}, we immediately obtain that
\begin{equation*}
    J_i(v^i, \hat{\boldsymbol{v}}^{-i})\ge J_i(\hat{v}^i, \hat{\boldsymbol{v}}^{-i}),
\end{equation*}
which yields (\ref{equiobj}).
Otherwise, $\|v^i\|^2$ is bounded by $\frac{4(K'-C_0)}{c}$. Hence, it suffices to establish \eqref{equiobj} in the case  where $\|v^i\|$ is bounded by an $O(1)$ constant.

As noted earlier, $\bar{P}(t)$ is constructed for approximating $P_t$ and $\hat{P}_t$. This motivates us to express the difference of the expected cost as
\begin{align*}
  & J_i(v^i, \hat{\boldsymbol{v}}^{-i})- J_i(\hat{v}^i, \hat{\boldsymbol{v}}^{-i}) \nonumber \\
 & =\mathbb{E}\left[\int_0^T (L(X_t^i,v_t^i)+\bar{P}(t) v_t^i)dt
+\Psi(X_T^i)\right]-\mathbb{E}\left[\int_0^T  (L(\hat{X}_t^i,\hat{v}_t^i)+\bar{P}(t) \hat{v}_t^i)dt
+\Psi(\hat{X}_T^i)\right]  \nonumber \\
&\quad+\mathbb{E}\Big[\int_0^T v_t^i (P_t-\bar{P}(t))dt\Big] -\mathbb{E}\Big[\int_0^T \hat{v}_t^i (\hat {P}_t-\bar{P}(t))dt\Big]  \nonumber \\
&:=I_1+I_2-I_3,
\end{align*}
where
\begin{align*}
I_1&=\mathbb{E}\left[\int_0^T  (L({X}_t^i,{v}_t^i)+\bar{P}(t) v_t^i)dt+\Psi({X}_T^i) -\left(\int_0^T  (L(\hat{X}_t^i,\hat{v}_t^i)+\bar{P}(t) \hat{v}_t^i)dt
+\Psi(\hat{X}_T^i)\right)\right],\\
I_2&=\mathbb{E}\left[\int_0^T v_t^i (P_t-\bar{P}(t))dt\right],\\
I_3&=\mathbb{E}\left[\int_0^T \hat{v}_t^i (\hat {P}_t-\bar{P}(t))dt\right].
\end{align*}

    
Thus, to finish the proof, it suffices to show 
\begin{align}\label{ooo}
    I_1\ge 0, \quad 
   I_2=O\Big(\frac{1}{\sqrt{N}}\Big), \quad  I_3=O\Big(\frac{1}{\sqrt{N}}\Big).
\end{align}

Since we have already established that $\hat{v}_t^i$ is optimal for minimizing
\begin{equation*}
    \mathbb{E}\left[\int_0^T  (L(X_t^i,v_t^i)+\bar{P}(t) v_t^i)dt+\Psi(X_T^i)\right],
\end{equation*}
it follows directly that $ I_1\ge 0$.

We now address the remaining term in \eqref{ooo}.
Direct calculation yields
\begin{align*}
& d\overline{\hat{X}}_t=-\frac{\bar{P}(t)+2a(t)\overline{{\hat{X}}}_t+B(t)}{c}dt+\frac{1}{N} \sum_{k=1}^N \sigma_k d W_t^k, \quad
\overline{\hat{X}}_0 =\bar{x}_0^N,\\
& d\bar{x}(t)=-\frac{\bar{P}(t)+2a(t)\bar{x}(t)+B(t)}{c}dt, \quad \bar{x}(0)=\bar{x}_0^N,\\
& \hat{P}_t-\bar{P}(t)=\frac{2\alpha}{c}\int_0^t e^{-\alpha(t-s)} a(s)\big(\overline{\hat{X_s}}-\bar{x}(s)\big)ds.
\end{align*}
Applying Grönwall’s lemma  and \textbf{Assumption} $\boldsymbol{(A_2)}$ yield 
\begin{equation*}
    \mathbb{E}\left[\sup_{0\le t\le T} \left|\bar{x}(t)-\overline{\hat{X}}_t\right|^2\right]\le \mathbb{E}\left[O(1)\frac{1}{N^2}\left(\sqrt{\sum_{k=1}^N \sigma_k^2}\right)^2\right]=O\left(\frac{1}{N}\right),
\end{equation*}
which further implies, using the Cauchy–Schwarz inequality,
\begin{align*}
    \|\hat{P}-\bar{P}\|^2&\le O(1) \mathbb{E}\left[\int_0^T \bigg( \int_0^t\left(e^{-\alpha(t-s)} \right)^2ds\bigg)\bigg ( \int_0^t \left(\overline{\hat{X}}_s-\bar{x}(s)\right)^2ds\bigg)dt\right]\\
    &\le O(1) \left(\int_0^T \bigg( \int_0^t\left(e^{-\alpha(t-s)} \right)^2ds\bigg)dt\right)  \mathbb{E}\left[ \int_0^T \left(\overline{\hat{X}}_s-\bar{x}(s)\right)^2ds\right]\\
    &=O\left(\frac{1}{N}\right).
\end{align*}
Then, from \eqref{price process},
\begin{equation*}
    P_t-\hat{P}_t=-\frac{\alpha}{N}\int_0^t e^{-\alpha(t-s)} (v_s^i-\hat{v}_s^i)ds.
\end{equation*}
From the price process \eqref{price process}, we deduce by another application of the Cauchy–Schwarz inequality that
\begin{equation*}
    \|P-\hat{P}\|^2\le \frac{1}{N^2 }\left(\int_0^T dt\int_0^t ( e^{-\alpha(t-s)}\alpha )^2ds\right)\mathbb{E}\left[ \int_0^T (v_s^i-\hat{v}_s^i)^2ds\right]=
O\bigg(\frac{1}{N^2}\bigg),
\end{equation*}
where the last equality follows from the fact that $\|\hat{v}^i\|$ and $\|v^i\|$ are all bounded by $O(1)$ numbers. These estimates imply  
\begin{equation*}
    \|\hat{P}-\bar{P}\|=O\left(\frac{1}{\sqrt{N}}\right),\quad
   \|P-\hat{P}\|= O\bigg(\frac{1}{N}\bigg),  \quad  \|P-\bar{P}\|\le\|P-\hat{P}\|+ \|\hat{P}-\bar{P}\|= O\left(\frac{1}{\sqrt{N}}\right),
\end{equation*}
where the last step applies  the triangle inequality.
Applying the Cauchy-Schwartz inequality once more gives
\begin{equation*}
    |I_2|\le  \|v^i\|\cdot\|P-\bar{P}\|=O\bigg(\frac{1}{\sqrt{N}}\bigg),
\end{equation*}
\begin{equation*}
    |I_3|\le  \|\hat{v}^i\|\cdot\|\hat{P}-\bar{P}\|=O\bigg(\frac{1}{\sqrt{N}}\bigg),
    \end{equation*}
which establishes \eqref{ooo}.
Finally, combining all the above arguments, we conclude that
\begin{equation*}
    J_i(v^i, \hat{\boldsymbol{v}}^{-i})- J_i(\hat{v}^i, \hat{\boldsymbol{v}}^{-i})\ge O\bigg(\frac{1}{\sqrt{N}}\bigg).
\end{equation*}
Hence, the proof is complete.
\end{proof}

\section{The Cooperate Games}

In this section, we aim to identify a set of strategies that achieve an $\epsilon$-optimal expected social cost. Since our objective is to minimize $J_{soc}$, it is essential to quantify the influence of agent 
$i$'s control on the value of
 $J_{soc}$. To this end, we decompose the price process 
$P$ into components that are affected and unaffected by agent $i$,  and then analyze how $v^i$ contribute to  $J_{soc}$.

 Our approach can be outlined in
the following steps.
\begin{enumerate}
    \item 
    We first show that under the optimal strategies of $J_{soc}$, the control of each agent $i$' must minimize the following auxiliary control problem:
\begin{align}\label{optimal property for auxilary Obj} 
 \check{J}_i(v^i)&:= \mathbb{E}\left[ \int_{0}^{T} \bigg(\frac{\eta}{2}  (X_t^i-\kappa)^2+\frac{c}{2} (v_t^i)^2+v_t^i P_t - \frac{u_t^i }{N} \sum_{k=1,k\neq i}^N\check{v}_t^k\bigg)dt  +\frac{\gamma}{2}  (X_T^i-\zeta)^2\right],
\end{align}
subject to 
\begin{equation}\label{eq:socPXi}
\left\{
\begin{aligned}
 &\frac{d P_t}{dt}=\alpha\bigg(\beta-\frac{1}{N}\sum_{k\neq i}^N\check{v}_t^k -\frac{1}{N} v_t^i-P_t\bigg),\quad P_0=p_0,\\
 & dX_t^i=v_t^idt+\sigma_idW_t^i,\quad  X_0^i=x_0^i,\\
 &u_t^i=\int_0^t e^{-\alpha(t-s)}\alpha v_s^i ds .
 \end{aligned}
 \right.
\end{equation}
    \item  We approximate the empirical mean $\frac{1}{N}\sum_{k\neq i}\check{v}_t^k$ with a continuous function $(\bar{q}(t))_{0\le t\le T}$. Subsequently, we construct a continuous approximation function  $(\bar{p}(t))_{0\le t\le T}$ to the price process $(P_t)_{t\in[0,T]}$, which evolves according to
    \begin{equation}\label{approximate price process 2}
        d \bar{p}(t)=\alpha(\beta-\bar{p}(t)-\bar{q}(t))dt,\quad \bar{p}(0)=p_0.
    \end{equation} 
    
    \item For each agent $i=1,\ldots,N$, applying the above approximation yields  the following  auxiliary objective function, 
    \begin{equation}\label{Obj for Social HJB}
        J'_i(v^i;\bar{p},\bar{q})=\mathbb{E}\left[ \int_{0}^{T} \Big(\frac{\eta}{2}  (X_t^i-\kappa)^2+\frac{c}{2} (v_t^i)^2 +v_t^i\bar{p}(t) -\bar{q}(t) u_t^i \Big)dt  +\frac{\gamma}{2}  (X_T^i-\zeta)^2 \right],
    \end{equation}
    subject to the state dynamics $(X^i, u^i) $ given by  \eqref{eq:socPXi}.
    \item By solving  the  control problem  $J'_i$, we obtain the corresponding optimal strategies and derive the consistency condition that the function $\bar{q}$ must satisfy. Finally, we verify that the strategies generated from the solution of this consistency condition achieve asymptotic social optimality.
\end{enumerate}
Our first step is to evaluate precisely the impact of   a single agent's strategy on  the expected social cost  $J_{soc}$. To do this, we divide $J_{soc}$ into three parts, one of them depends only on agent $i$'s strategy, while the others are independent of it. Specifically, we state that:
\begin{theorem}
     If the strategies $\{\check{v}_t^i\}_{i=1}^N$ optimize $J_{soc}$,  then  for any $i=1,\ldots,N$, $\check{v}_t^i$ is the solution for minimizing control problem \eqref{optimal property for auxilary Obj}-\eqref{eq:socPXi} when other agents' maintain their control $\check{v}^{-i}$.

\end{theorem}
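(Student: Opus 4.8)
The plan is to freeze the other agents' strategies at $\check{\boldsymbol{v}}^{-i}$, regard $J_{soc}$ as a functional of $v^i$ alone, and show that up to an additive constant independent of $v^i$ it coincides with $\frac{1}{N}\check{J}_i(v^i)$. Since $\{\check{v}^k\}_{k=1}^N$ minimizes $J_{soc}$, no unilateral change of agent $i$'s control can lower $J_{soc}$, so $\check{v}^i$ must minimize $v^i\mapsto J_{soc}(v^i,\check{\boldsymbol{v}}^{-i})$; the identity then forces $\check{v}^i$ to minimize $\check{J}_i$, which is exactly the assertion.

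First I would isolate the dependence of the price on agent $i$. The price equation \eqref{price process} is linear, so with the strategies of all $k\neq i$ fixed at $\check{v}^k$ its solution splits as
\[
P_t=\tilde{P}_t-\frac{1}{N}u_t^i,\qquad \tilde{P}_t:=e^{-\alpha t}p_0+\alpha\int_0^t e^{-\alpha(t-s)}\Big(\beta-\tfrac{1}{N}\sum_{k\neq i}\check{v}_s^k\Big)ds,
\]
where $u_t^i=\int_0^t e^{-\alpha(t-s)}\alpha v_s^i\,ds$ is precisely the quantity appearing in \eqref{eq:socPXi}, and $\tilde{P}_t$ does not depend on $v^i$. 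The essential point recorded by this splitting is that agent $i$'s only influence on the whole price path is the term $-\frac{1}{N}u_t^i$.

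Next I would expand $N\,J_{soc}(v^i,\check{\boldsymbol{v}}^{-i})$ by separating the $k=i$ summand from the $k\neq i$ summands. In every summand with $k\neq i$ the state $X^k$ is driven only by $\check{v}^k$ and the independent noise $W^k$, so $L(X_t^k,\check{v}_t^k)$ and $\Psi(X_T^k)$ are constant in $v^i$; the only $v^i$-dependent piece there is $\mathbb{E}\big[\int_0^T P_t\check{v}_t^k\,dt\big]$, and inserting $P_t=\tilde{P}_t-\tfrac1N u_t^i$ turns it into a constant plus $-\tfrac1N\mathbb{E}\big[\int_0^T u_t^i\check{v}_t^k\,dt\big]$. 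Summing over $k\neq i$ produces exactly the social-correction term $-\tfrac1N\mathbb{E}\big[\int_0^T u_t^i\sum_{k\neq i}\check{v}_t^k\,dt\big]$ of \eqref{optimal property for auxilary Obj}, while the $k=i$ summand is kept intact as $\mathbb{E}\big[\int_0^T(L(X_t^i,v_t^i)+P_tv_t^i)\,dt+\Psi(X_T^i)\big]$ with the full price $P_t$. Collecting the $v^i$-dependent contributions yields
\[
N\,J_{soc}(v^i,\check{\boldsymbol{v}}^{-i})=\check{J}_i(v^i)+C^{-i},
\]
where $C^{-i}$ gathers all terms independent of $v^i$; equivalence of the two minimization problems, and hence the claim, follows at once.

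The main obstacle is the cross-effect: agent $i$'s control perturbs the common price, which feeds back into every other agent's running cost $P_t\check{v}_t^k$. Capturing this feedback with the correct sign and the correct $1/N$ factor, and recognizing it as exactly the correction $-\tfrac{u^i}{N}\sum_{k\neq i}\check{v}^k$ in the auxiliary functional, is the crux of the argument; everything else is bookkeeping once the explicit representation $P_t=\tilde{P}_t-\tfrac1N u_t^i$ is in hand. A minor care point is to justify that the $k\neq i$ states and the $v^i$-free part of the price are genuinely constant in $v^i$, which rests on the independence of the driving noises and on $X^k$ depending only on its own control.
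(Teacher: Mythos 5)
Your proposal is correct and takes essentially the same route as the paper: you both exploit the explicit exponential solution of the linear price equation to split $P$ into a $v^i$-independent part plus $-\frac{1}{N}u^i$, recognize the feedback of $v^i$ on the other agents' price terms as exactly the correction $-\frac{u^i}{N}\sum_{k\neq i}\check{v}^k$, and conclude that $N\,J_{soc}(v^i,\check{\boldsymbol{v}}^{-i})$ equals $\check{J}_i(v^i)$ plus a constant independent of $v^i$. The paper packages the same bookkeeping as the decomposition $J_{soc}=Y_1^i+Y_3^i+Y_4^i$ with $Y_1^i,Y_3^i$ free of $v^i$ and $Y_4^i=\frac{1}{N}\check{J}_i(v^i)$, which is your identity in different notation.
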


\begin{proof}
For $i=1,\ldots, N $, let $J_{soc}=Y_1^i+Y_2^i$,  where
\begin{align*}
&Y_1^i :=\frac{1}{N}   \sum_{k\neq i} \mathbb{E}\left[ \int_{0}^{T} \left(\frac{\eta}{2}  (X_t^k-\kappa)^2+\frac{c}{2} (v_t^k)^2\right)dt +\frac{\gamma}{2} \left(X_T^k-\zeta\right)^2 \right], \\
&Y_2^i :=\frac{1}{N} \mathbb{E}\left[ \int_{0}^{T} \left(\frac{\eta}{2}  (X_t^i-\kappa)^2+\frac{c}{2} (v_t^i)^2+\Big(\sum_{k=1}^N v_t^k\Big)P_t\right)dt  +\frac{\gamma}{2}  (X_T^i-\zeta) ^2  \right].
\end{align*}
Note that the price process  
$P$ is only partially influenced by strategy $v^i$. Solving (\ref{price process}), we   obtain  
 \begin{equation*}
   P_t=e^{-\alpha t}p_0+\int_0^t  e^{-\alpha (t-s) } \alpha \beta ds-\int_0^t  e^{-\alpha (t-s) } \alpha Q_s ds,
 \end{equation*}
    Using the decomposition $Q_t=\frac{1}{N}v_t^i+\frac{1}{N}\sum_{k\neq i}v_t^k$, we further decompose $Y_2^i=Y_3^i+Y_4^i$, where the components are given by
    \begin{align*}
      Y_3^i &=\frac{1}{N}
        \mathbb{E}\left[  \int_0^T  \left(\sum_{k\neq i} v_t^k\right)   \left(e^{-\alpha t}p_0+\int_0^t  e^{-\alpha (t-s) } \alpha \beta ds-\frac{\alpha}{N}\int_0^t  e^{-\alpha (t-s) }   \left(\sum_{k\neq i}v_s^k\right) ds\right)dt\right], \\   
     Y_4^i &= \frac{1}{N} \mathbb{E}\left[ \int_{0}^{T} \left(\frac{\eta}{2}  (X_t^i-\kappa)^2+\frac{c}{2} {(v_t^i)}^2+v_t^iP_t\right)dt  +\frac{\gamma}{2}  \left(X_T^i-\zeta\right)^2 \right]  \\
  & \quad +\frac{1}{N} \mathbb{E}\left[ \int_0^T\left(\sum_{k\neq i}v_t^k\right) \left(-\frac{\alpha}{N} \int_0^t e^{-\alpha (t-s) } v_s^i ds\right)dt \right].
    \end{align*}
 We thus observe that $J_{soc}=Y_1^i+Y_3^i+Y_4^i$, and both $Y_1^i$ and $  Y_3^i$ are  independent of $v^i$. Since $Y_4^i = \frac{1}{N} \check{J}_i(v^i)$,  we conclude that if $(\check{v}_t^1,...,\check{v}_t^N)$ minimizes $J_{soc}$, then for $i=1,\ldots,N$, the strategy $\check{v}_t^i$ minimizes $\check{J}_i(v^i)$. The proof is complete.
\end{proof}

 

Subsequently, we solve the optimal control problem \eqref{Obj for Social HJB}.
From \eqref{eq:socPXi}, we obtain
\begin{equation*}
     du_t^i=(-\alpha u_t^i +\alpha v_t^i) dt, \quad u_0^i=0.
\end{equation*}
Therefore, we consider the system
\begin{equation}\label{xandu}
\left\{
\begin{aligned}
 & dX_s=v_sds+\sigma_idW_s,\quad  X_t=x,\\
 &du_s=(-\alpha u_s +\alpha v_s) ds,\quad u_t=u,\quad \forall t\le s\le T,
 \end{aligned}
 \right.
\end{equation}
and we define the admissible strategy set for $v$ as \begin{align*}
     \mathcal{C}_i=\Big\{v\ |\  &v \text{ is a  real-value process progressively measurable w.r.t } \mathbb{F},\  \|v\|< \infty,\\
    & \text{and there exists a unique strong solution to \eqref{xandu}}\Big\} ,
\end{align*}
By dynamic programming principle, the value function  
\begin{equation*}
V_i(t,x,u)= \inf_{v^i\in \mathcal{C}_i} \mathbb{E}\left[ \int_{t}^{T} \Big(\frac{\eta}{2} (X_s^i-\kappa)^2+\frac{c}{2} (v_s^i)^2-\bar{q}(s) u_s^i+v_s^i\bar{p}(s)\Big)ds  +\frac{\gamma}{2}  (X_T^i-\zeta)^2 \right]
\end{equation*}
satisfies the following HJB equation 
\begin{equation*}
\frac{\partial V_i}{\partial t}+\inf_{v^i\in \mathcal{C}_i} \left\{
\frac{\partial V_i}{\partial x}v_t^i+\frac{\partial V_i}{\partial u}(\alpha v_t^i-\alpha u)
+\frac{c}{2}(v_t^i)^2+v_t^i\bar{p}(t)
\right\}-\bar{q}(t)u+\frac{\sigma_i^2}{2} \frac{\partial^2 V_i}{\partial^2 x}+\frac{\eta}{2} (x-\kappa)^2=0.
\end{equation*}
with terminal condition $V_i(T,x,u)=\frac{\gamma}{2}(x-\zeta)^2$. We adopt the ansatz 
\begin{equation*}
    V_i(t,x,u)=a_i(t)x^2+b_i(t)x+l_i(t)u+f_i(t),
\end{equation*}
which reduces the HJB equation to
\begin{align*}
& a'_i(t)x^2+b'_i(t)x+f_i'(t)+l'_i(t)u+\inf _{v^i\in \mathcal{A}} \Big\{
(2a_i(t)x+b_i(t))v_t^i+\alpha l_i(t) v_t^i-\alpha u l_i(t)
\\
&\quad +\frac{c}{2}{(v_t^i)}^2+v_t^i\bar{p}(t) 
    \Big\}
    -\bar{q}(t)u+ \sigma_i^2a(t)+\frac{\eta}{2} (x-\kappa)^2=0.
\end{align*}
From this, we obtain the explicit optimal control
\begin{equation}
    \check{v}_t^i=-\frac{2a_i(t)\check{X}_t^i+b_i(t)+\bar{p}(t)+\alpha l_i(t)}{c}.\label{optimal control for Social}
\end{equation}
Substituting \eqref{optimal control for Social} into the HJB equation yields the following ODE system:
\begin{flalign}
    &\qquad\qquad\qquad a'_i(t) + \frac{\eta}{2} - \frac{2a^2_i(t)}{c} = 0, & & a_i(T) = \frac{\gamma}{2}, & \label{Social HJB eq 1} \\
    &\qquad\qquad\qquad b'_i(t) - \eta\kappa - \frac{2a_i(t)(\bar{p}(t)+b_i(t)+\alpha l_i(t))}{c} = 0, & & b_i(T) = -\gamma\zeta, & \label{Social HJB eq 2} \\
    &\qquad\qquad\qquad f'_i(t) + \sigma_i^2a_i(t) + \frac{\eta\kappa^2}{2} - \frac{(b_i(t)+\bar{p}(t)+\alpha l_i(t))^2}{2c} = 0, & & f_i(T) = \frac{\gamma\zeta^2}{2}, & \label{Social HJB eq 3} \\
    &\qquad\qquad\qquad l'_i(t) - \alpha l_i(t) - \bar{q}(t) = 0, & & l_i(T) = 0. & \label{Social HJB eq 4}
\end{flalign}

We assert that (\ref{Social HJB eq 1})–(\ref{Social HJB eq 4}) admit a unique solution on 
$[0,T]$. The proof is nearly identical to that of \autoref{a(t) exists theorem}, so we only provide a brief outline.
First, (\ref{Social HJB eq 1}) has a unique solution on 
$[0,T]$ by \autoref{a(t) exists theorem}.
Next, as in the proof of \autoref{a(t) exists theorem}, both \eqref{Social HJB eq 4} and \eqref{Social HJB eq 2} admit unique solutions on 
$[0,T]$. Finally, uniqueness of the solution to (\ref{Social HJB eq 3}) follows immediately by integrating the equation from $t$ to $T$. Since $a_i(t),\ b_i(t)$ and $l_i(t)$ are identical for all agents, we abbreviate them as $a(t),\ b(t)$ and $l(t)$, respectively. Following the same argument as in \autoref{thm:veri}, one can verify the optimality of \eqref{optimal control for Social}, although such verification is not required for establishing the results in \autoref{social main theorem}.

As in the analysis of the non-cooperative game, when all agents adopt the strategy in (\ref{optimal control for Social}), the empirical mean of $X_t^i$, denoted by $\overline{X}_t$, can be approximated by a function $(\bar{x}(t))_{t\in [0,T]}$, whose dynamics satisfy
\begin{equation*}
    d\bar{x}(t)=-\frac{2a(t)\bar{x}(t)+b(t)+\bar{p}(t)+\alpha l(t)}{c}dt,\quad\bar{x}(0)=\bar{x}_0^N.
\end{equation*}

This implies that the requirement in step 3 is equivalent to $\bar{q}(t)=-\frac{2a(t)\bar{x}(t)+b(t)+\bar{p}(t)+\alpha l(t)}{c}$. Combining this with \eqref{approximate price process 2}, \eqref{Social HJB eq 2}, \eqref{Social HJB eq 4}, we observe that the requirement is equivalent to ensuring that the system 
\begin{equation}\label{mean field process evolution for Social}
    \frac{d}{dt}
\begin{pmatrix}
    \bar{p}(t)\\
    \bar{x}(t)\\
    b(t)\\
    l(t)
\end{pmatrix}
=
\begin{pmatrix}
    -\alpha+\frac{\alpha}{c} & \frac{2\alpha a(t)   }{c}& \frac{\alpha}{c}& 
    \frac{\alpha^2}{c}\\
    -\frac{1}{c}&-\frac{2a(t)}{c }& -\frac{1}{c}&-\frac{\alpha}{c}\\
    \frac{2a(t)}{c }& 0 &\frac{2a(t)}{c }&\frac{2\alpha a(t)}{c }\\
    -\frac{1}{c} &-\frac{2a(t)}{c}&-\frac{1}{c}&\alpha-\frac{\alpha}{c}
\end{pmatrix}
\begin{pmatrix}
    \bar{p}(t)\\
    \bar{x}(t)\\
    b(t)\\
    l(t)
\end{pmatrix}
+
\begin{pmatrix}
    \alpha \beta\\
    0\\
    \eta\kappa\\
    0
\end{pmatrix} ,
\end{equation}
together with the associated initial and terminal conditions,
\begin{equation}\label{mean field process evolution for Social: ini&ter values}
    \begin{pmatrix}
    \bar{p}(0),\ 
    \bar{x}(0),\ 
   b(T),\ 
    l(T)
\end{pmatrix}
^\top
=
\begin{pmatrix}
    p_0,\ 
    \overline{x}_0^N,\ 
    -\gamma\zeta,\ 
    0
\end{pmatrix}
^\top,
\end{equation}
admits a solution on $[0,T]$.

As before, the main challenge lies in the boundary conditions being imposed at different time points.
To overcome this, we first construct two processes whose initial values are both specified at $t=0$, as described below.
\begin{proposition}\label{propo for ensuring social MFG}
Equation systems 
\begin{align}\label{Social auxilary eq 1}
    \frac{d}{dt}
\begin{pmatrix}
    \bar{p}_1(t)\\
    \bar{x}_1(t)\\
    {b}_1(t)\\
    {l}_1(t)
\end{pmatrix}
&=
\begin{pmatrix}
    -\alpha+\frac{\alpha}{c} & \frac{2\alpha a(t)   }{c}& \frac{\alpha}{c}& 
    \frac{\alpha^2}{c}\\
    -\frac{1}{c}&-\frac{2a(t)}{c }& -\frac{1}{c}&-\frac{\alpha}{c}\\
    \frac{2a(t)}{c }& 0 &\frac{2a(t)}{c }&\frac{2\alpha a(t)}{c }\\
    -\frac{1}{c} &-\frac{2a(t)}{c}&-\frac{1}{c}&\alpha-\frac{\alpha}{c}
\end{pmatrix}
\begin{pmatrix}
    \bar{p}_1(t)\\
    \bar{x}_1(t)\\
    {b}_1(t)\\
    {l}_1(t)
\end{pmatrix},
\end{align}
and 

\begin{align}\label{Social auxilary eq 2}
    \frac{d}{dt}
\begin{pmatrix}
    \bar{p}_2(t)\\
    \bar{x}_2(t)\\
    {b}_2(t)\\
    {l}_2(t)
\end{pmatrix}
&=
\begin{pmatrix}
    -\alpha+\frac{\alpha}{c} & \frac{2\alpha a(t)   }{c}& \frac{\alpha}{c}& 
    \frac{\alpha^2}{c}\\
    -\frac{1}{c}&-\frac{2a(t)}{c }& -\frac{1}{c}&-\frac{\alpha}{c}\\
    \frac{2a(t)}{c }& 0 &\frac{2a(t)}{c }&\frac{2\alpha a(t)}{c }\\
    -\frac{1}{c} &-\frac{2a(t)}{c}&-\frac{1}{c}&\alpha-\frac{\alpha}{c}
\end{pmatrix}
\begin{pmatrix}
    \bar{p}_2(t)\\
    \bar{x}_2(t)\\
    {b}_2(t)\\
    {l}_2(t)
\end{pmatrix},
\end{align}
associated with initial conditions
$(\bar{p}_1(0),\ \bar{x}_1(0),\ {b}_1(0),\  {l}_1(0))^\top
= (0,0,1,0)^{\top} $ and 
$(\bar{p}_2(0),\ \bar{x}_2(0),\ {b}_2(0),\  {l}_2(0))^\top
= (0,0,0,1)^{\top} $, 
admit a unique solution, denoted by $(\phi_1^*(t))_{t\in [0,T]}=((\bar{p}_1(t),\ \bar{x}_1(t),\ b_1(t),\ l_1(t))^\top)_{t\in [0,T]}$ and $(\phi_2^*(t))_{t\in [0,T]}$ $=((\bar{p}_2(t),\ \bar{x}_2(t),\ b_2(t),\ l_2(t))^\top)_{t\in [0,T]}$ respectively. 
\end{proposition}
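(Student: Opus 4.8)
The plan is to recognize that both \eqref{Social auxilary eq 1} and \eqref{Social auxilary eq 2} are \emph{linear homogeneous} first-order ODE systems sharing one and the same coefficient matrix, whose only dependence on $t$ enters through the function $a(t)$. Consequently the entire statement reduces to the standard existence-and-uniqueness theory for linear systems, exactly as in the proof of \autoref{propo for existence 2}, the only differences being that the matrix is now $4\times 4$ rather than $3\times 3$ and that we must treat the two distinct initial data $(0,0,1,0)^\top$ and $(0,0,0,1)^\top$. I would therefore denote the common coefficient matrix by $A(t)$ and phrase the argument once, applying the conclusion twice.

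The key input is the regularity of $a$. From \autoref{a(t) exists theorem} (and its proof), the Riccati equation $a'(t)+\tfrac{\eta}{2}-\tfrac{2a^2(t)}{c}=0$ with $a(T)=\tfrac{\gamma}{2}$ admits a unique solution that is continuous and bounded on all of $[0,T]$. Since every entry of $A(t)$ is an affine function of $a(t)$ with constant coefficients built from $\alpha$ and $c$, it follows that $A(\cdot)$ is continuous on $[0,T]$ and that $M_A:=\sup_{t\in[0,T]}\lVert A(t)\rVert<\infty$. The map $\phi\mapsto A(t)\phi$ is then globally Lipschitz in $\phi$, uniformly in $t$, with Lipschitz constant $M_A$; moreover any local solution satisfies the a priori bound $\lVert\phi(t)\rVert\le\lVert\phi(0)\rVert\,e^{M_A t}$ by Gronwall's inequality, which rules out finite-time blow-up and guarantees that the solution extends to the whole interval $[0,T]$.

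With these two facts in hand, the Picard--Lindel\"of theorem (in its global form for linear systems) yields, for each prescribed initial value at $t=0$, a unique $C^1$ solution on $[0,T]$. Applying this to the initial datum $(0,0,1,0)^\top$ produces $\phi_1^*$ and to $(0,0,0,1)^\top$ produces $\phi_2^*$, which is precisely the claim. I do not expect any substantive obstacle here: the only nontrivial ingredient is global-in-time existence, and that is already secured by the boundedness of $a$ supplied by \autoref{a(t) exists theorem}; the remainder is the routine verification that a linear vector field with continuous, bounded coefficients is globally Lipschitz. The structure is identical to \autoref{propo for existence 2}, so the write-up can legitimately refer back to that argument rather than reproducing it.
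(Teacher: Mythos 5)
Your proposal is correct and follows essentially the same route as the paper: the paper likewise reduces the claim to boundedness of the coefficient matrix on $[0,T]$ (inherited from the boundedness of $a(t)$ established in \autoref{a(t) exists theorem}), rules out blow-up by a Gronwall-type norm estimate as in \autoref{propo for existence}, and obtains uniqueness from the Lipschitz property via the Picard--Lindel\"of theorem. Your direct use of linearity for the a priori bound $\lVert\phi(t)\rVert\le\lVert\phi(0)\rVert e^{M_A t}$ is a slightly cleaner packaging of the same estimate, but the substance is identical.
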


    Next, we show that when the initial values of (\ref{mean field process evolution for Social}) are specified only at $t=0$, the system admits a unique solution.
This implies that the solution to \eqref{mean field process evolution for Social} is uniquely determined by $b(0)$ and $l(0)$, once given $\bar{p}(0)=p_0$ and $\bar{x}(0)=\bar{x}_0^N$.
\begin{proposition}\label{propo for ensuring social solution 2}
    For $\forall b_0,\ l_0\in\mathbb{R}$, the initial values $(\bar{p}(0),\ \bar{x}(0), \ {b}(0),\ {l}(0))^\top=(p_0,\ \bar{x}_0^N,\ b_0,\ l_0)^\top$ gives a unique solution that satisfies (\ref{mean field process evolution for Social}), denoted by
    \begin{equation*}
        (\phi_{b_0,  l_0}^*(t))_{t\in [0,T]}=((\bar{p}_{b_0,l_0}(t),\ \bar{x}_{b_0,l_0}(t),\ b_{b_0,l_0}(t),\ l_{b_0,l_0}(t))^\top)_{t\in [0,T]}.
    \end{equation*}
\end{proposition}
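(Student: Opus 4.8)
The plan is to recognize (\ref{mean field process evolution for Social}) as a linear inhomogeneous first-order ODE system $\dot{y}(t) = M(t)\,y(t) + g$, where $y(t) = (\bar{p}(t),\, \bar{x}(t),\, b(t),\, l(t))^\top$, the forcing vector is the constant $g = (\alpha\beta,\, 0,\, \eta\kappa,\, 0)^\top$, and $M(t)$ is the $4\times 4$ coefficient matrix whose only $t$-dependence enters through $a(t)$. This is exactly the structure already treated in \autoref{propo for existence} and \autoref{propo for ensuring social MFG}; the sole differences here are that the initial data $(p_0,\, \bar{x}_0^N,\, b_0,\, l_0)^\top$ are prescribed entirely at $t=0$ and that the system carries a nonzero but constant (hence continuous) forcing term. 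First I would invoke \autoref{a(t) exists theorem}, from whose proof $a(t)$ is continuous and bounded on $[0,T]$; consequently every entry of $M(t)$ is continuous and bounded, so $L := \sup_{t\in[0,T]} \|M(t)\| < \infty$.

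Next I would establish existence and uniqueness by the standard Picard--Lindel\"of theorem. Since the right-hand side $y \mapsto M(t)\,y + g$ is affine in $y$, it is globally Lipschitz in the state variable with Lipschitz constant $L$, uniformly in $t \in [0,T]$, and it is jointly continuous in $(t,y)$. These hypotheses furnish a unique local solution emanating from the prescribed value at $t=0$.

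The only point requiring care, and the closest thing to an obstacle, is that this local solution must be shown to extend over the entire interval $[0,T]$ rather than blowing up at some interior time. This is precisely where the boundedness of $M(t)$, inherited from the boundedness of $a(t)$, is essential: writing the integrated form $y(t) = y(0) + \int_0^t \bigl(M(s)\,y(s) + g\bigr)\,ds$, taking norms, and applying Gronwall's inequality yields $\|y(t)\| \le \bigl(\|y(0)\| + T\|g\|\bigr)\, e^{LT}$ on the maximal interval of existence. Because this a priori bound is finite and independent of how far the solution has been continued, no finite-time blow-up can occur, and the unique solution extends to all of $[0,T]$, delivering the claimed $(\phi_{b_0,l_0}^*(t))_{t\in[0,T]}$.

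Alternatively, and more in the spirit of the preceding propositions, I would exploit the linear structure directly: once a single particular inhomogeneous solution $y_p$ with initial value $(p_0,\, \bar{x}_0^N,\, 0,\, 0)^\top$ is known to exist, superposition gives $\phi_{b_0,l_0}^* = y_p + b_0\,\phi_1^* + l_0\,\phi_2^*$, where $\phi_1^*,\, \phi_2^*$ are the homogeneous solutions supplied by \autoref{propo for ensuring social MFG}; their first two components vanish at $t=0$, so the initial data match exactly. Uniqueness then follows because any difference of two solutions solves the homogeneous system with zero initial data, which by Gronwall is identically zero on $[0,T]$.
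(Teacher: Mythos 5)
Your main argument---continuity and boundedness of the coefficient matrix inherited from $a(t)$, Picard--Lindel\"of for local existence and uniqueness of the affine system, and a Gronwall a priori bound ruling out finite-time blow-up---is correct and is essentially the paper's own proof, which likewise gets uniqueness from the Lipschitz continuity of $G(t)$ via Picard--Lindel\"of and existence by the same no-blow-up norm estimate used in \autoref{propo for existence}. (Your superposition alternative is circular as an existence proof, since the particular solution $y_p$ is itself an instance of the proposition with $b_0=l_0=0$, but your primary argument already supplies it; the paper in fact uses exactly that superposition identity afterwards, in the proof of \autoref{A3givesol}.)
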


  We now introduce a sufficient condition ensuring the existence and uniqueness of solutions to (\ref{mean field process evolution for Social})–(\ref{mean field process evolution for Social: ini&ter values}), stated in \textbf{Assumption} $\boldsymbol{(A_3)}$.

\vspace{3mm}
\noindent\textbf{Assumption $\boldsymbol{(A_3):}$ } The  systems $((\bar{p}_1(t),\ \bar{x}_1(t),\ b_1(t),\ l_1(t))^\top)_{t\in [0,T]}$ and 
$((\bar{p}_2(t),\ \bar{x}_2(t)$, $\ b_2(t),$
$\ l_2(t))^\top)_{t\in [0,T]}$, which are given by \eqref{Social auxilary eq 1} and  \eqref{Social auxilary eq 2} respectively, 
satisfy that $({b}_1(T),\ {l}_1(T))^\top$, $({b}_2(T),\ {l}_2(T))^\top$ are linearly independent vectors. 

\autoref{A3givesol} below illustrates that \textbf{Assumption $\boldsymbol{(A_3)}$} is sufficient for ensuring the existence of solution to (\ref{mean field process evolution for Social}) and (\ref{mean field process evolution for Social: ini&ter values}). 
\begin{proposition}\label{A3givesol}
    If \textbf{Assumption} $\boldsymbol{(A_3)}$ holds, then the existence and uniqueness of solution to (\ref{mean field process evolution for Social}) and (\ref{mean field process evolution for Social: ini&ter values}) also hold.
\end{proposition}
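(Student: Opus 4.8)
The plan is to convert the two–point boundary value problem \eqref{mean field process evolution for Social}–\eqref{mean field process evolution for Social: ini&ter values} into a single $2\times 2$ linear algebraic system via a shooting (superposition) argument, and to show that Assumption $\boldsymbol{(A_3)}$ is exactly the condition guaranteeing that this algebraic system is uniquely solvable. The only genuine difficulty here is the split nature of the boundary data — $\bar{p}(0),\bar{x}(0)$ are prescribed at $t=0$ while $b(T),l(T)$ are prescribed at $t=T$ — since the differential equation itself is linear with coefficients that are continuous (hence bounded) on $[0,T]$, so that existence and uniqueness on all of $[0,T]$ for any fixed initial value is already guaranteed by the preceding propositions.

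First I would appeal to \autoref{propo for ensuring social solution 2}: for every choice $b(0)=b_0$ and $l(0)=l_0$, the initial data $(\bar{p}(0),\bar{x}(0),b(0),l(0))^\top=(p_0,\bar{x}_0^N,b_0,l_0)^\top$ determines a unique solution $\phi^*_{b_0,l_0}$ of \eqref{mean field process evolution for Social} on $[0,T]$. Consequently, any solution of the full boundary value problem must coincide with $\phi^*_{b_0,l_0}$ for some pair $(b_0,l_0)$, and the task reduces to selecting $(b_0,l_0)$ so that the terminal constraints $b(T)=-\gamma\zeta$ and $l(T)=0$ are satisfied.

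Next I would exploit linearity. Since $\phi_1^*$ and $\phi_2^*$ from \autoref{propo for ensuring social MFG} solve the homogeneous version of \eqref{mean field process evolution for Social} with initial values $(0,0,1,0)^\top$ and $(0,0,0,1)^\top$, the function $\phi^*_{0,0}+b_0\phi_1^*+l_0\phi_2^*$ satisfies the inhomogeneous equation and has initial value $(p_0,\bar{x}_0^N,b_0,l_0)^\top$; by the uniqueness just invoked, $\phi^*_{b_0,l_0}=\phi^*_{0,0}+b_0\phi_1^*+l_0\phi_2^*$ on $[0,T]$. Reading off the $b$- and $l$-components at $t=T$, where $b_{0,0}(T)$ and $l_{0,0}(T)$ denote the corresponding components of $\phi^*_{0,0}(T)$, the terminal conditions become
\begin{equation*}
\begin{pmatrix} b_1(T) & b_2(T) \\ l_1(T) & l_2(T)\end{pmatrix}\begin{pmatrix} b_0 \\ l_0 \end{pmatrix}=\begin{pmatrix} -\gamma\zeta - b_{0,0}(T) \\ -l_{0,0}(T)\end{pmatrix}.
\end{equation*}

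Finally, Assumption $\boldsymbol{(A_3)}$ asserts precisely that the columns $(b_1(T),l_1(T))^\top$ and $(b_2(T),l_2(T))^\top$ are linearly independent, i.e. the coefficient matrix above is nonsingular. Hence the system has a unique solution $(b_0,l_0)$, which in turn yields a unique $\phi^*_{b_0,l_0}$ solving the full boundary value problem, so that both existence and uniqueness follow at once. I expect no analytic obstacle beyond this linear-algebraic reduction; the only care required is the clean bookkeeping of the particular solution $\phi^*_{0,0}$ and the verification of the superposition identity, after which the equivalence between solvability and the nonsingularity condition of $\boldsymbol{(A_3)}$ is immediate.
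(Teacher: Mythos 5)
Your proposal is correct and follows essentially the same route as the paper's own proof: both reduce the two-point boundary value problem via the superposition identity $\phi^*_{b_0,l_0}=\phi^*_{0,0}+b_0\phi_1^*+l_0\phi_2^*$ (justified by the uniqueness in \autoref{propo for ensuring social solution 2}) and then evaluate at $t=T$, where Assumption $\boldsymbol{(A_3)}$ guarantees a unique choice of $(b_0,l_0)$. Your version merely makes explicit the $2\times 2$ linear system whose nonsingularity the paper invokes implicitly, which is a presentational refinement rather than a different argument.
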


    
As in other works on mean field games, a coercivity condition is required to prevent the absence of an optimal or $\epsilon$-optimal control. However, this condition need not hold for arbitrary positive parameters $c$ and $\eta$. To ensure coercivity, we impose the following assumption.

\vspace{3mm}
\noindent\textbf{Assumption} $\boldsymbol{(A_4):}$  The parameters $c$ and $\eta$ satisfy that $(c-2)\eta> \alpha^2$.
\begin{remark}
     To illustrate the necessity of this assumption, consider the case in which $v_t^i$ remains being a large constant. Then, the term $\frac{1}{N}\mathbb{E}\left[\int_0^T \sum_{i=1}^N (v_t^i P_t) dt\right]$ can become a large negative number, potentially dominating the positive contributions from $\frac{1}{N}\mathbb{E}\left[\int_0^T \sum_{i=1}^N\left( \frac{c}{2}{(v_t^i)}^2+\frac{\eta}{2}(X_t^i-\kappa)^2\right)dt\right]$ when $c $ and $\eta$ are sufficiently small. Consequently, $J_{soc}$ may approach $-\infty$ as the control norm increases, violating coercivity. Since the coefficient in front of $v_t^i P_t$ in $J_{soc}$ is 1, requiring 
$\frac{c}{2}>1$ and a proper lower bound for $\eta$ is sufficient to guarantee coercivity, as stated in \textbf{Assumption} $\boldsymbol{(A_4)}$. 
\end{remark}

\vskip 3mm
Under these assumptions, we construct the process described in \autoref{social main theorem} and show that it yields a set of 
$\epsilon$-optimal strategies.

\begin{theorem}\label{social main theorem}
Assume that \textbf{Assumption} $\boldsymbol{(A_2)}$, $\boldsymbol{(A_3)}$ and $\boldsymbol{(A_4)}$ hold, and the unique solution for (\ref{mean field process evolution for Social}) and (\ref{mean field process evolution for Social: ini&ter values}) is  $(\bar{p}(t),\  \bar{x}(t),\   b(t),\  l(t))^\top$. We construct the system
\begin{equation}\label{form of optimal control in Social}
\left\{
\begin{aligned}
 &d \check{X}_t^i=\check{v}_t^idt+\sigma_idW_t^i,\quad\check {X}_0^i=x_0^i, \\
 & \check{v}_t^i=-\frac{2a(t)\check{X}_t^i+{b}(t)+\bar{p}(t)+\alpha {l}(t)}{c}, \\ 
 & \check{Q}_t=\frac{1}{N}\sum_{i=1}^N \check{v}_t^i,\\
 & d \check{P}_t=\alpha(\beta-\check{P}_t-\check Q_t)dt,\quad\check{P}_0=p_0. 
\end{aligned}
\right.
\end{equation}
Then,  the set of strategies $\{\check{v}^i\}_{1\le i \le N}$ is a set of $\epsilon$-optimal strategies for $J_{soc}$. Moreover, specific calculation shows that under $\{\check v_t^i\}_{i=1}^N$,  $\epsilon_N=O(\frac{1}{\sqrt{N}})$ in \eqref{Social Obj}.  
\end{theorem}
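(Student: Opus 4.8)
The plan is to treat $J_{soc}$ as a quadratic functional of $(v^1,\dots,v^N)$ and to show that the proposed profile $\{\check v^i\}$ almost satisfies its first-order optimality condition, while \textbf{Assumption} $\boldsymbol{(A_4)}$ supplies the convexity needed to upgrade this to genuine near-optimality. First I would invoke \autoref{coercive condition for social} to restrict to controls of bounded average norm: it yields a bound of the form $J_{soc}(v)\ge \frac1N\sum_{i=1}^N\mathbb E\!\left[\int_0^T(\frac\eta4(X_t^i-\kappa)^2+\frac c4(v_t^i)^2)dt\right]+C_0$, so any profile with $\frac1N\sum_i\|v^i\|^2$ exceeding a fixed $O(1)$ threshold already satisfies $J_{soc}(v)\ge J_{soc}(\check v)$ (using that $J_{soc}(\check v)=O(1)$, which follows from moment bounds analogous to \autoref{Proposed control bounded lemma for Nash}); for such $v$ the inequality \eqref{Social Obj} is trivial, and it remains to treat profiles with $\frac1N\sum_i\|v^i\|^2=O(1)$. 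For these, set $\delta v^i:=v^i-\check v^i$; since the two state equations share the same Brownian motion the noise cancels and $\delta X_t^i=\int_0^t\delta v_s^i\,ds$ is the deterministic state response, so the quadratic structure of $J_{soc}$ gives
\[
J_{soc}(v)-J_{soc}(\check v)=DJ_{soc}(\check v)[\delta v]+\tfrac12 D^2J_{soc}[\delta v,\delta v].
\]

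The second-order form is nonnegative by convexity: \autoref{coercive condition for social} --- precisely the point where $\boldsymbol{(A_4)}$ and the Positive Real Lemma dominate the indefinite coupling $\frac1N\sum_i v_t^iP_t$ --- bounds $J_{soc}$ below by a positive-definite quadratic plus a constant, so its leading quadratic part satisfies $D^2J_{soc}[\delta v,\delta v]\ge 0$. For the first-order term I would use the algebraic identity $J_{soc}=Y_1^i+Y_3^i+\frac1N\check J_i(v^i)$ obtained while decomposing $J_{soc}$, which holds for arbitrary controls and in which $Y_1^i,Y_3^i$ do not depend on $v^i$. Summing the per-agent variations therefore gives $DJ_{soc}(\check v)[\delta v]=\frac1N\sum_{i=1}^N D\check J_i(\check v^i)[\delta v^i]$. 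Because $\check v^i$ is the stationary point of the genuinely convex auxiliary functional $J'_i(\cdot;\bar p,\bar q)$ in \eqref{Obj for Social HJB} determined by the HJB system \eqref{Social HJB eq 1}--\eqref{Social HJB eq 4}, one has $DJ'_i(\check v^i)[\delta v^i]=0$; hence $D\check J_i(\check v^i)[\delta v^i]$ equals exactly the mean-field replacement error between $\check J_i$ in \eqref{optimal property for auxilary Obj} and $J'_i$, namely
\[
\mathbb E\!\left[\int_0^T\!\delta v_t^i(\check P_t-\bar p(t))\,dt\right]+\mathbb E\!\left[\int_0^T\!\check v_t^i\,\delta P_t^{(i)}\,dt\right]-\mathbb E\!\left[\int_0^T\!\delta u_t^i\Big(\tfrac1N\!\sum_{k\neq i}\check v_t^k-\bar q(t)\Big)dt\right],
\]
where $\delta u_t^i=\int_0^t e^{-\alpha(t-s)}\alpha\,\delta v_s^i\,ds$ and $\delta P^{(i)}$ is the $O(1/N)$ price response to agent $i$'s perturbation alone.

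It then remains to estimate this error. Exactly as in the proof of \autoref{Nash main theorem}, a Gronwall argument with \textbf{Assumption} $\boldsymbol{(A_2)}$ gives $\|\frac1N\sum_{k=1}^N\check X^k-\bar x\|=O(1/\sqrt N)$, and hence, through \eqref{approximate price process 2} and the consistency relation $\bar q(t)=-\frac{2a(t)\bar x(t)+b(t)+\bar p(t)+\alpha l(t)}{c}$, the estimates $\|\check P-\bar p\|=O(1/\sqrt N)$ and $\|\frac1N\sum_{k\neq i}\check v^k-\bar q\|=O(1/\sqrt N)$, uniformly in $i$. Cauchy--Schwarz applied to the three terms above (together with $\|\delta u^i\|\le C\|\delta v^i\|$ and $\|\check v^i\|=O(1)$) bounds each by $O(1/\sqrt N)\|\delta v^i\|$, so that $|DJ_{soc}(\check v)[\delta v]|\le O(1/\sqrt N)\cdot\frac1N\sum_i\|\delta v^i\|\le O(1/\sqrt N)(\frac1N\sum_i\|\delta v^i\|^2)^{1/2}=O(1/\sqrt N)$. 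Combining with $D^2J_{soc}\ge 0$ yields $J_{soc}(v)\ge J_{soc}(\check v)-O(1/\sqrt N)$ for every admissible $v$; since $\check v$ is itself admissible this is \eqref{Social Obj} with $\epsilon_N=O(1/\sqrt N)$. The main obstacle is the second-order step: the term $\frac1N\sum_i v_t^iP_t$ produces a genuinely indefinite quadratic form, and only the restriction $\boldsymbol{(A_4)}$ via the Positive Real Lemma (packaged in \autoref{coercive condition for social}) secures $D^2J_{soc}\ge 0$; the subsidiary difficulty is the careful differentiation of the externality term $-\bar q u^i$ and its matching against the true price feedback in the first-order estimate.
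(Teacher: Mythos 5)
Your proposal is correct in substance and arrives at the same conclusion with the same key ingredients (coercivity truncation, the $O(1/\sqrt N)$ mean-field estimates $\|\check P-\bar p\|,\ \|\frac{1}{N}\sum_{k\neq i}\check v^k-\bar q\|=O(1/\sqrt N)$, and the decomposition $J_{soc}=Y_1^i+Y_3^i+\frac1N\check J_i$), but it is organized along a genuinely different route. The paper works hands-on: it writes $J_{soc}(\boldsymbol v)-J_{soc}(\check{\boldsymbol v})=I_1+I_2$, kills the linear-in-$\Delta$ terms in $I_1$ by explicit It\^o identities (notably \eqref{equation for simplifying I_1}, built on $\frac1N\sum_i\Delta u_t^i=-\Delta P_t$ and It\^o applied to $l(T)\Delta u_T^i$ and $a(T)(\Delta X_T^i)^2$), and then proves $I_1\ge0$ by Jensen's inequality, \autoref{PRL for social}, and \textbf{Assumption} $\boldsymbol{(A_4)}$ directly. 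You instead exploit that $J_{soc}$ is exactly quadratic in $(v^1,\dots,v^N)$, so the Taylor expansion terminates; your observation that the uniform lower bound in \autoref{coercive condition for social} forces $D^2J_{soc}\ge0$ (by scaling $v=tw$, $t\to\infty$) is a clean conceptual shortcut that uses the Positive Real Lemma and $\boldsymbol{(A_4)}$ only once, inside the coercive lemma, rather than re-deploying them; and your identification of $DJ_{soc}(\check{\boldsymbol v})[\delta v]$ with the mean-field replacement error between $\check J_i$ and $J'_i$ is the calculus-level restatement of what the paper's $I_1$-simplification and $I_2$ accomplish algebraically. Two caveats. First, you quote the coercive bound in its Nash form $\frac\eta4(X_t^i-\kappa)^2+\frac c4(v_t^i)^2$; \autoref{coercive condition for social} actually gives $\epsilon Q_t^2+\epsilon'(v_t^i)^2+C$ — harmless, since the $\epsilon'(v_t^i)^2$ term still yields your restriction to $\frac1N\sum_i\|v^i\|^2=O(1)$, which is what your Cauchy--Schwarz step needs. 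Second, your step $DJ'_i(\check v^i)[\delta v^i]=0$ is asserted via the HJB system, but the paper explicitly declines to verify optimality of \eqref{optimal control for Social} for $J'_i$; stationarity (weaker than optimality) does hold, but to make it rigorous you must carry out the adjoint/It\^o computation — differentiating $J'_i$ and using \eqref{Social HJB eq 1}--\eqref{Social HJB eq 4} together with It\^o's formula applied to $a(t)$-, $b(t)$- and $l(t)$-weighted quantities — which is precisely the computation the paper performs in simplifying $I_1$. With that computation spelled out, your argument closes and gives the same $\epsilon_N=O(1/\sqrt N)$.
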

Suppose there exists another admissible control profile
$\boldsymbol{v}=\{v^i\}_{1\le i\le N}$,  
and let the corresponding processes evolve according to
\begin{equation*}
\left\{
\begin{aligned}
 &d {X_t^i}={v_t^i}dt+\sigma_idW_t^i, \quad X_t^i=x_0^i,\\
 &Q_t=\frac{1}{N}\sum_{i=1}^N {v_t^i}, \\ 
 & d P_t=\alpha(\beta-P_t-Q_t)dt,\quad P_0=p_0. 
\end{aligned}
\right.
\end{equation*}
Our objective is to show that \eqref{Social Obj} holds, or equivalently
\begin{equation}\label{socobj}
    J_{soc}(\boldsymbol{v})-J_{soc}(\check{\boldsymbol{v}})\ge -\epsilon_N,
\end{equation}
where  $\epsilon_N=O(\frac{1}{\sqrt{N}})$.
To establish this, we introduce two lemmas.
 \autoref{coercive condition for social} characterizes the coercivity condition.
To facilitate a clearer proof, we first present  \autoref{PRL for social}.
\begin{lemma}\label{PRL for social}
For any $i=1,2,\ldots,N$ and $v\in \mathcal{A}_i$, we construct the process

\begin{equation*}
  x_t=
\begin{pmatrix}
    \tilde{X}_t\\
    \tilde{P}_t
\end{pmatrix},\quad
    \frac{d}{dt}x_t
=
\begin{pmatrix}
    0 &0\\
    0 &-\alpha
\end{pmatrix}
x_t
+
\begin{pmatrix}
    1\\
    -\alpha
\end{pmatrix}
v_t,\quad x_0=
\begin{pmatrix}
    0\\0
\end{pmatrix}.
\end{equation*}
Then, the following result 
\begin{equation*}
    \mathbb{E}\left[\int_0^T v_t \Big(v_t+\alpha \tilde{X}_t+\tilde{P}_t\Big )dt\right]\ge 0\label{Social first lemma}
\end{equation*}
holds.
\end{lemma}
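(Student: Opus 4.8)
The plan is to treat the statement pathwise and then close it by a dissipation (storage-function) argument, which is the analytic content of the Positive Real Lemma. The crucial observation is that the auxiliary system driving $(\tilde X_t,\tilde P_t)$ is a \emph{deterministic} linear ODE with no Brownian term, so for each fixed $\omega$ the trajectories $\tilde X_\cdot(\omega),\tilde P_\cdot(\omega)$ are completely determined by the path $s\mapsto v_s(\omega)$. Since $v\in\mathcal{A}$ gives $v_\cdot(\omega)\in L^2[0,T]$ for a.e.\ $\omega$, the inner integral is a well-defined finite random variable, and it suffices to prove the purely deterministic inequality $\int_0^T v_t\,(v_t+\alpha\tilde X_t+\tilde P_t)\,dt\ge 0$ for every $v\in L^2[0,T]$; taking expectations then preserves nonnegativity.

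For the deterministic inequality I would exhibit the nonnegative quadratic storage function
\begin{equation*}
  S(\tilde X,\tilde P)=\frac{\alpha}{2}\tilde X^2+\frac{1}{2\alpha}\tilde P^2\ \ge 0,
\end{equation*}
and verify a pointwise dissipation identity. Differentiating along the dynamics $\dot{\tilde X}_t=v_t$ and $\dot{\tilde P}_t=-\alpha\tilde P_t-\alpha v_t$ gives $\frac{d}{dt}S=\alpha\tilde X_t v_t-\tilde P_t^2-\tilde P_t v_t$, and a direct cancellation yields the key identity
\begin{equation*}
  v_t\bigl(v_t+\alpha\tilde X_t+\tilde P_t\bigr)-\frac{d}{dt}S(\tilde X_t,\tilde P_t)=(v_t+\tilde P_t)^2\ge 0 .
\end{equation*}
Integrating over $[0,T]$ and using $S(\tilde X_0,\tilde P_0)=0$ (because $x_0=0$) together with $S\ge 0$ gives
\begin{equation*}
  \int_0^T v_t\bigl(v_t+\alpha\tilde X_t+\tilde P_t\bigr)\,dt
  =S(\tilde X_T,\tilde P_T)+\int_0^T (v_t+\tilde P_t)^2\,dt\ \ge 0,
\end{equation*}
and taking expectations finishes the argument.

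The only non-routine step is discovering the storage function, i.e.\ solving the matrix (Lur'e/KYP) inequality that underlies the Positive Real Lemma. Writing $A=\diag(0,-\alpha)$, $B=(1,-\alpha)^\top$ and output row $C=(\alpha,1)$, the required dissipation inequality is the LMI
\begin{equation*}
  \begin{pmatrix}
    -(A^\top\Pi+\Pi A) & \tfrac12 C^\top-\Pi B\\[2pt]
    \tfrac12 C-B^\top\Pi & 1
  \end{pmatrix}\succeq 0
\end{equation*}
in the unknown $\Pi=\Pi^\top\succeq0$; a Schur complement (valid since the bottom-right entry is $1$) reduces it to a quadratic matrix inequality in the entries of $\Pi$. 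I would solve this with the diagonal ansatz $\Pi=\diag(p_{11},p_{22})$: nonnegativity of the $(1,1)$ block forces $p_{11}=\alpha/2$ and $p_{12}=0$, while the $(2,2)$ block collapses to the degenerate condition $(\alpha p_{22}-\tfrac12)^2\le 0$, pinning down $p_{22}=1/(2\alpha)$. This recovers exactly the $S$ above, and the perfect-square residual $(v+\tilde P)^2$ confirms the bound is tight. I expect the bookkeeping of this LMI—matching the cross terms and checking semidefiniteness—to be the main (though elementary) obstacle, mirroring the role of \autoref{PRL for Nash} in the non-cooperative case; once the storage function is in hand, everything reduces to the one-line verification above.
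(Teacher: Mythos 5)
Your proof is correct and takes essentially the same route as the paper: your storage matrix $\Pi=\diag(\alpha/2,\,1/(2\alpha))$ is exactly half the paper's $P_2=\diag(\alpha,\,1/\alpha)$, so you are solving the same KYP/positive-real LMI for the same system data $A=\diag(0,-\alpha)$, $B=(1,-\alpha)^\top$, $C=(\alpha,1)$, $D=1$. The only difference is presentational: the paper verifies the LMI and invokes the Positive Real Lemma of Boyd et al.\ as a black box, while you unpack it into a self-contained pathwise dissipation identity with the explicit perfect-square residual $(v_t+\tilde P_t)^2$, which in addition makes the tightness of the bound visible.
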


We now proceed to formulate the coercivity condition rigorously, as stated in \autoref{coercive condition for social}.

\begin{lemma}\label{coercive condition for social}
    Assume that \textbf{Assumptions} $\boldsymbol{(A_2)}$ and $\boldsymbol{(A_4)}$ hold. Then there exists constants $ \epsilon>0,\ \epsilon'>0$,  and $ C\in\mathbb{R}$, all independent of $N$ and $i$, such that 
\begin{equation*}
    J_{soc}(\boldsymbol{v}) 
\ge\frac{1}{N} \sum_{i=1}^N \mathbb{E}\left[\int_0^T ({\epsilon}Q_t^2+\epsilon' (v_t^i)^2)dt\right]+C.
\end{equation*}
\end{lemma}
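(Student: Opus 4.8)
The plan is to expand $J_{soc}(\boldsymbol{v})$, isolate the only sign-indefinite term, and control it with \autoref{PRL for social} together with the running state penalty, the balance being dictated precisely by \textbf{Assumption} $\boldsymbol{(A_4)}$. Since $\frac1N\sum_{i=1}^N v_t^i P_t=Q_tP_t$, I would first write
\begin{equation*}
J_{soc}(\boldsymbol{v})=\frac1N\sum_{i=1}^N\mathbb{E}\Big[\int_0^T\Big(\tfrac{\eta}{2}(X_t^i-\kappa)^2+\tfrac{c}{2}(v_t^i)^2\Big)dt+\tfrac{\gamma}{2}(X_T^i-\zeta)^2\Big]+\mathbb{E}\Big[\int_0^T P_tQ_t\,dt\Big].
\end{equation*}
The terminal cost is nonnegative and is discarded. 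Jensen's inequality gives $\frac1N\sum_i(v_t^i)^2\ge Q_t^2$ and $\frac1N\sum_i(X_t^i-\kappa)^2\ge(\overline{X}_t-\kappa)^2$, so after reserving a fraction $\epsilon'$ of the control penalty for the final estimate, the task reduces to showing that the averaged quadratic functional $\tfrac{c-2\epsilon'}{2}Q_t^2+\tfrac{\eta}{2}(\overline{X}_t-\kappa)^2+P_tQ_t$, integrated and expected, dominates $\epsilon\int_0^TQ_t^2\,dt$ up to an $N$-independent constant.

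To treat $\mathbb{E}[\int_0^T P_tQ_t\,dt]$, I would split $P_t=\rho(t)+\Xi_t$, where $\dot\rho=\alpha(\beta-\rho)$, $\rho(0)=p_0$ is bounded and deterministic, and $\Xi_t:=P_t-\rho(t)$ solves $\dot\Xi_t=-\alpha\Xi_t-\alpha Q_t$, $\Xi_0=0$. Young's inequality disposes of $\int\rho(t)Q_t$ at the cost of an arbitrarily small multiple of $\int Q_t^2$ plus a constant. For the essential part $\int Q_t\Xi_t$ I would invoke \autoref{PRL for social} with $v=Q$, so that its state processes become $\tilde{X}_t=\int_0^tQ_s\,ds$ and $\tilde{P}_t=\Xi_t$, whence
\begin{equation*}
\mathbb{E}\Big[\int_0^TQ_t\Xi_t\,dt\Big]\ge-\mathbb{E}\Big[\int_0^TQ_t^2\,dt\Big]-\alpha\,\mathbb{E}\Big[\int_0^TQ_t\tilde{X}_t\,dt\Big].
\end{equation*}
The decisive move is to keep $\int Q_t\tilde{X}_t$ as a running integral and bound $-\alpha Q_t\tilde{X}_t\ge-\tfrac{\alpha\delta}{2}Q_t^2-\tfrac{\alpha}{2\delta}\tilde{X}_t^2$ by Young, rather than collapsing it to the terminal value $\tfrac{\alpha}{2}\tilde{X}_T^2$, which the running state cost could not absorb.

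Finally I would connect $\tilde{X}$ to the state penalty. Because $\overline{X}_t=\overline{x}_0^N+\tilde{X}_t+\frac1N\sum_{i=1}^N\sigma_iW_t^i$, decomposing $\tilde{X}_t=(\overline{X}_t-\kappa)+(\kappa-\overline{x}_0^N)-\frac1N\sum_i\sigma_iW_t^i$ and applying Young once more gives, for any $\epsilon''>0$,
\begin{equation*}
\mathbb{E}\Big[\int_0^T\tilde{X}_t^2\,dt\Big]\le(1+\epsilon'')\,\mathbb{E}\Big[\int_0^T(\overline{X}_t-\kappa)^2\,dt\Big]+C,
\end{equation*}
where \textbf{Assumption} $\boldsymbol{(A_2)}$ (boundedness of $\{x_0^i\}$ and $\{\sigma_i\}$) keeps $C$ independent of $N$, the noise contributing only an $O(1/N)$ term. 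Choosing $\delta=\alpha(1+\epsilon'')/\eta$ makes $\tfrac{\alpha}{2\delta}\tilde{X}_t^2$ be absorbed exactly by $\tfrac{\eta}{2}(\overline{X}_t-\kappa)^2$, and collecting the coefficients of $\int Q_t^2$ leaves a residual equal to $\tfrac{1}{2\eta}\big((c-2)\eta-\alpha^2\big)$ minus terms that vanish with $\epsilon',\epsilon''$ and the auxiliary Young parameters, hence strictly positive by \textbf{Assumption} $\boldsymbol{(A_4)}$ once those parameters are taken small enough; this fixes $\epsilon>0$, $\epsilon'>0$ and lumps all bounded remainders into $C$.

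The main obstacle is the sharp bookkeeping of constants ensuring that the borderline condition $(c-2)\eta>\alpha^2$ — rather than some $T$-dependent inequality — is exactly what yields a positive leftover coefficient on $\int Q_t^2$. This hinges on two points: using Young's inequality on $Q_t\tilde{X}_t$ so as to stay with running integrals absorbable by the $\eta$-penalty, and controlling the discrepancy between $\tilde{X}_t$ and $\overline{X}_t-\kappa$ uniformly in $N$ via \textbf{Assumption} $\boldsymbol{(A_2)}$.
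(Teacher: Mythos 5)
Your proposal is correct and follows essentially the same route as the paper's proof: you decompose $P_t$ into the bounded deterministic part $\rho(t)$ (the paper's $C(t)$) plus the control-driven part $\Xi_t$ (the paper's $P_t^\dagger$), apply \autoref{PRL for social} with $v=Q$ and state $\tilde{X}_t=\int_0^t Q_s\,ds$ (the paper's $\overline{X}_t'$), relate $\tilde{X}_t$ to $\overline{X}_t-\kappa$ via \textbf{Assumption} $\boldsymbol{(A_2)}$, and extract the positive leftover coefficient on $\int Q_t^2$ from \textbf{Assumption} $\boldsymbol{(A_4)}$. The only difference is cosmetic bookkeeping: your Young parameters $\delta$ and $\epsilon''$ play the role of the paper's constants $A$ and $h$ in the balancing condition $2\sqrt{(\frac{c}{2}-\epsilon'-1-\epsilon)Ah}>\alpha$, and both reduce to the same borderline inequality $(c-2)\eta>\alpha^2$.
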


Having established the necessary preparations, we now prove \autoref{social main theorem}. 
\begin{proof}[Proof of  \autoref{social main theorem}]
Let
\begin{equation*}
    \Delta X_t^i=X_t^i-\check{X}_t^i,\quad \Delta v_t^i=v_t^i-\check{v}_t^i,\quad \Delta P_t=P_t-\check{P}_t,\quad \Delta Q_t=Q_t-\check{Q}_t,\quad \Delta u_t^i=u_t^i-\check{u}_t^i,
\end{equation*}
where $u^i$ and $\check u^i$ are given by
\begin{equation*}
    du_t^i=-\alpha u_t^i dt+\alpha v_t^i dt, \quad u_0^i=0,
\end{equation*}
\begin{equation*}
    d\check{u}_t^i=-\alpha \check{u}_t^i dt+\alpha \check{v}_t^i dt,\quad  \check{u}_0^i=0.
\end{equation*}
Let $(\check{X}_t^i,\ \check{P}_t,\ \check{Q}_t)$ denote the processes induced by the optimal control in \eqref{form of optimal control in Social}.
Under this notation, the difference in the social cost can be written as
\begin{align}
    J_{soc}(\boldsymbol{v})-J_{soc}(\check{\boldsymbol{v}})&=\frac{1}{N}\sum_{i=1}^N \mathbb{E}\left[    \int_{0}^{T} \left(\frac{\eta}{2}  \left(\check{X}_t^i+\Delta X_t^i-\kappa\right)^2-\frac{\eta}{2}  \left(\check{X}_t^i-\kappa\right)^2+\frac{c}{2} \left(\check{v}_t^i+\Delta v_t^i
    \right)^2-\frac{c}{2} ({\check{v}_t^i)}^2\right)dt\right]\nonumber\\
    &\quad+\frac{1}{N}\sum_{i=1}^N\mathbb{E}\left[\int_0^T\left((\check{v}_t^i+\Delta{v_t^i})(\check{P}_t+\Delta{P}_t)-\check{v}_t^i\check{P}_t\right) dt\right] \nonumber\\
    &\quad+\frac{1}{N}\sum_{i=1}^N\mathbb{E}\left[\frac{\gamma}{2}  \left(\check{X}_T^i+\Delta X_T^i-\zeta\right)^2 -\frac{\gamma}{2}  \left(\check{X}_T^i-\zeta\right)^2   \right].
    \label{difference of J equality}
\end{align}
Applying Itô’s formula, we convert the terminal contribution into the integral form,

\begin{align}
        &\mathbb{E}\bigg[\frac{\gamma}{2}  \left(\check{X}_T^i+\Delta X_T^i-\zeta\right)^2 -\frac{\gamma}{2}  \left(\check{X}_T^i-\zeta\right)^2\bigg]\nonumber\\
        &=\mathbb{E}\left[\int_0^T \left(a'(t)( \Delta X_t^i)^2+2a'(t)\Delta X_t^i\check {X}_t^i+      2a(t)\Delta v_t^i\Delta X_t^i\right)dt\right]\nonumber\\
    &\quad+\mathbb{E}\left[\int_0^T\left(2a(t)\Delta X_t^i\check {v}_t^i+2a(t)\Delta v_t^i\check{X}_t^i+{b}'(t)\Delta X_t^i+{b}(t)\Delta v_t^i\right)dt\right].\label{equality for terminal terms}
\end{align}
Substituting this into (\ref{difference of J equality}) gives
\begin{equation*}
    J_{soc}(\boldsymbol{v})-J_{soc}(\check{\boldsymbol{v}}): =I_1+I_2,
\end{equation*}
where $I_1$ and $I_2$ are defined by
\begin{align}
    I_1&=\frac{1}{N}\sum_{i=1}^N  \mathbb{E}\bigg[\int_0^T\left((\Delta{X_t^i})^2\left(\frac{\eta}{2}+a'(t)\right)+\Delta{X_t^i} \check{X}_t^i\left(\eta+2a'(t)-\frac{(2a(t))^2}{c}\right)\right)dt\bigg]\nonumber\\
&\quad+\frac{1}{N}\sum_{i=1}^N\mathbb{E}\left[\int_0^T\left(\Delta{X_t^i}\left(-\eta\kappa+{b}'(t)-\frac{2a(t)(\bar{p}(t)+\alpha{l}(t)+{b}(t))}{c}\right)+\Delta v_t^i \left(-\alpha l(t)\right)\right)dt\right]\nonumber\\
    &\quad+\frac{1}{N}\sum_{i=1}^N\mathbb{E}\left[\int_0^T\left(\frac{c}{2}(\Delta v_t^i)^2+\Delta v_t^i \Delta P_t+\bar{q}(t)\Delta P_t+2a(t)\Delta v_t^i \Delta X_t^i\right)dt\right]\nonumber\\
   & =\frac{1}{N}\sum_{i=1}^N\mathbb{E}\left[\int_0^T\left((\Delta{X_t^i})^2\left(\frac{\eta}{2}+a'(t)\right)-\Delta v_t^i \alpha {l}(t)+\frac{c}{2}(\Delta v_t^i)^2\right)dt\right]\nonumber  \\ 
    &\quad+\frac{1}{N}\sum_{i=1}^N\mathbb{E}\left[\int_0^T\left(\bar{q}(t)\Delta P_t+\Delta v_t^i \Delta P_t+2a(t)\Delta v_t^i \Delta X_t^i\right)dt\right],
\label{equality for I_1}
\end{align}
and 
\begin{equation*}
     I_2=\frac{1}{N}\sum_{i=1}^N \mathbb{E}\left[\int_0^T \left( (\check{v}_t^i-\bar{q}(t))\Delta P_t+\Delta Q_t(\check{P}_t-\bar{p}(t)\right)dt\right].
\end{equation*}

We now provide a more detailed explanation of how we derive $I_1$ and what $I_2$ characterizes. To derive $I_1$, we  approximate $\sum_{i=1}^N\check{v}_t^i\Delta P_t$ by $\sum_{i=1}^N\overline{q}(t)\Delta P_t$ and approximate $\sum_{i=1}^N\check{P}_t\Delta v_t^i$ by $\sum_{i=1}^N\overline{p}(t)\Delta v_t^i$. We then use (\ref{equality for terminal terms}) to rewrite the terminal terms, and apply the optimal control formula \eqref{form of optimal control in Social} to eliminate $\check v_t^i$. This yields 
$I_1$, which represents the leading-order change in $J_{soc}$ caused by deviating from the proposed  strategies.  The second part, $I_2$, captures the approximation error due to replacing $(\check v_t^i,\check P_t)$ with $(\bar q,\bar p)$.

Following exactly the same argument as in the proof of \autoref{Proposed control bounded lemma for Nash}, we obtain the existence of an $O(1)$ constant $K''$ such that
\begin{equation*}
    \mathbb{E}\left[\sup_{0\le t\le T }|\check{X}_t^i|^2\right] 
 \vee \mathbb{E}\left[\sup_{0\le t\le T }|\check{Q}_t|^2\right]\le K'', 
    \ \mathbb{E}\left[\int_0^T ({\check{v}_t^i}) ^2 dt\right] \le T \mathbb{E}\left[\sup_{0\le t\le T }(\check{v}_t^i)^2 dt\right]\le K'', 
  \   i=1,\ldots,N,
\end{equation*}
and 
\begin{equation*}
    |J_{i}(\check{v}^i,\check{\boldsymbol{v}}^{-i})|\le K'', \quad i=1,2,\ldots ,N,
\end{equation*}
which further implies
\begin{equation*}
    J_{soc}(\check{\boldsymbol{v}})\le K''.
\end{equation*}

By the coercivity condition in \autoref{coercive condition for social}, there exists $O(1)$ constant  $M_1>0$ such that, if $\mathbb{E}\left[ \int_0^T Q_t^2dt\right]>M_1$, we have
\begin{equation*}
    J_{soc}(\boldsymbol{v})\ge J_{soc}(\check{\boldsymbol{v}}).
\end{equation*}
This inequality directly gives us the desired result. Thus, it suffices to establish the desired inequality \eqref{socobj} under the regime $\mathbb{E}\left[\int_0^T Q_t^2dt\right]\le M_1$. Indeed,  we will show that 
\begin{equation*}    
        I_1\ge 0,\quad 
        I_2=O\bigg(\frac{1}{\sqrt{N}}\bigg).
\end{equation*}
We first prove the claim for the term $I_1$. The next step relies on the identity
\begin{equation}\label{equation for simplifying I_1}
 \mathbb{E}\left[\sum_{i=1}^N \int_0^T \left( \bar{q}(t) \Delta P_t-\Delta v_t^i \alpha {l}(t)\right)dt\right] =0.
\end{equation}
We sketch the argument.
Applying Itô’s formula and using the evolution of ${l}$ in \eqref{Social HJB eq 4}, we obtain
\begin{align}\label{equation for Delta u and v}
     0&=\mathbb{E}\left[ {l}(T)\Delta u_T^i\right]=\mathbb{E}\left[\int_0^T \left({l}'(t)\Delta u_t^i+{l}(t)(-\alpha \Delta u_t^i+\alpha \Delta v_t^i)\right)dt\right]\nonumber\\
     &=\mathbb{E}\left[\int_0^T \left(\bar{q}(t)\Delta u_t^idt +\alpha{l}(t) \Delta v_t^i\right)dt\right],
\end{align}
with $\Delta{P_t}$ and $\Delta u_t^i$ satisfying,
\begin{equation}
    d \Delta{P_t}=\alpha(-\Delta{P_t}-\Delta Q_t)dt,\quad\Delta{P_0}=0,\label{Delta P process}
\end{equation}
 \begin{equation}
    d\Delta u_t^i=\alpha(- \Delta u_t^i +\Delta v_t^i)dt,\quad \Delta u_0^i=0.\label{Delta u_t^i process}
\end{equation}
Since $\Delta Q_t$ is the average of $\Delta v_t^i$, it follows from these dynamics that
\begin{equation}
    \frac{1}{N} \sum_{i=1}^N \Delta u_t^i=-\Delta P_t,\label{P,u relationship}
\end{equation}
and then
\begin{equation*}
    \mathbb{E}\left[\sum_{i=1}^N \int_0^T \left( \bar{q}(t) \Delta P_t-\Delta v_t^i \alpha {l}(t)\right)dt\right]=\mathbb{E}\left[\sum_{i=1}^N  \int_0^T \left(-\bar{q}(t) \Delta u_t^i-\Delta v_t^i \alpha {l}(t)\right)dt \right]=0,
\end{equation*}
where the first and the second step are derived from (\ref{P,u relationship}) and (\ref{equation for Delta u and v}) respectively. This leads directly to
\eqref{equation for simplifying I_1}. Substituting \eqref{equation for simplifying I_1} into the expression for $I_1$ and applying Itô’s formula again, we obtain
\begin{equation*}
    I_1=\frac{1}{N}\sum_{i=1}^N\mathbb{E}\left[\int_0^T\left(\sum_{i=1}^N (\Delta{X_t^i})^2\left(\frac{\eta}{2}+a'(t)\right)+\frac{c}{2}(\Delta v_t^i)^2 +\Delta v_t^i \Delta P_t+2a(t)\Delta v_t^i \Delta X_t^i\right)dt\right].
\end{equation*}
 Applying Itô’s formula once more, we obtain
\begin{equation*}
    \mathbb{E}\left[a(T)(\Delta X_T^i)^2\right]=\mathbb{E}\left[\int_0^T \left(a'(t) (\Delta X_t^i)^2+2a(t)\Delta X_t^i \Delta v_t^i\right)dt \right].
\end{equation*}
This further simplifies the expression of $I_1$ into
\begin{equation*} I_1=\frac{1}{N}\sum_{i=1}^N\mathbb{E}\left[\int_0^T \left(\frac{\eta}{2}(\Delta{X_t^i})^2+\frac{c}{2}(\Delta v_t^i)^2 +\Delta v_t^i \Delta P_t\right)dt+\frac{\gamma}{2}(\Delta X_T^i)^2\right].
\end{equation*}
Let
\begin{equation*}
    \overline{\Delta X_t}:=\frac{1}{N} \sum_{i=1}^N \Delta X_t^i
\end{equation*}
be the averaged deviation process. 
Note that 
\begin{equation*}
\left\{
    \begin{aligned}
        &d \overline{\Delta X_t}=\Delta Q_tdt,\\
        &d \Delta P_t=-\alpha \Delta P_tdt-\alpha \Delta Q_tdt,\\
        & \overline{\Delta X_0}=0,\quad \Delta P_0=0.
    \end{aligned}
    \right.
\end{equation*}
By  \autoref{PRL for social}, we conclude that
\begin{equation*}
   \mathbb{E}\left[ \int_0^T \left(\Delta Q_t+\Delta P_t+\alpha\overline{\Delta X_t}\right){\Delta Q_t} dt \right]\ge 0.
\end{equation*}
In order to use this inequality, we invoke \textbf{Assumption} $\boldsymbol{(A_4)}$, and apply  fundamental inequality to obtain
\begin{equation*}
    \frac{\eta}{2}\left(\overline{\Delta X_t}\right)^2-\alpha \overline{\Delta X_t}\Delta Q_t+\frac{c-2}{2}  \Delta Q_t^2\ge 0.
\end{equation*}
Subsequently, the convexity of $\frac{\eta}{2}x^2$ leads to
\begin{align}
 I_1 &\ge \frac{1}{N}\sum_{i=1}^N\mathbb{E}\left[\int_0^T \left(\frac{\eta}{2}\left(\overline{\Delta{X_t}}\right)^2+\frac{c}{2}\left(\Delta Q_t\right)^2 +\Delta v_t^i \Delta P_t\right)dt\right]\nonumber\\
 &=  \frac{1}{N}\sum_{i=1}^N\mathbb{E}\left[\int_0^T\left(\Delta Q_t (\Delta P_t+\alpha \overline{\Delta X_t}+\Delta  Q_t) \right)dt\right]\nonumber\\
 &\quad+\frac{1}{N}\sum_{i=1}^N\mathbb{E}\left[\int_0^T\left(\frac{\eta}{2}(\overline{\Delta X_t})^2-\alpha \overline{\Delta X_t}\Delta Q_t+\frac{c-2}{2}  (\Delta Q_t)^2 \right)dt\right] , \label{inequality for I_1}
\end{align}
where  Jensen's inequality is used to replace $\sum_{i=1}^N(\Delta X_t^i)^2$ by $N\overline{\Delta X_t}^2$, and  $\sum_{i=1}^N(\Delta v_t^i)^2$ by $N\Delta Q_t^2$. Since the right hand side of \eqref{inequality for I_1} is nonnegative, we conclude
\begin{equation*}
    I_1\ge 0.
\end{equation*}
The proof for $I_1$ is completed, and it is straightforward to prove the estimate for $I_2$. Similar as in  \autoref{Nash main theorem}'s proof, we have
\begin{equation*}
    \mathbb{E}\left[\int_0^T \bigg(\frac{1}{N}\sum_{i=1}^N\big( \check{v}_t^i-\bar{q}(t)\big)\bigg)^2dt \right]=O\Big(\frac{1}{N}\Big).
\end{equation*}
Using the Cauchy–Schwarz and triangle inequalities, together with the bound  {$\|Q\|^2\le M_1$}, we obtain
\begin{equation*}
    \|\Delta Q\|^2\le O(1) (\|Q\|+\|\check{Q}\|)^2\le O(1).
\end{equation*}
Moreover, solving \eqref{Delta P process} and applying Cauchy–Schwarz again yields
\begin{equation*}
    \Delta P_t=-\alpha\int_0^t e^{-\alpha(t-s)}\Delta Q_sds,
\end{equation*}
and
\begin{equation*}
    |\Delta P_t|^2\le  \alpha^2 \|\Delta Q\|^2\left(\int_0^t e^{-2\alpha(t-s)}ds \right)\le O(1) \|\Delta Q\|^2, \quad\|\Delta P\|^2\le O(1) \|\Delta Q\|^2.
\end{equation*}
Thus,
\begin{equation}\label{I2:1}
     \frac{1}{N}\left|\mathbb{E}\left[\int_0^T  \sum_{i=1}^N\big(\check{v}_t^i-\bar{q}(t)\big)\Delta P_tdt\right]\right|\le \frac{1}{N} \|\Delta P\|\cdot \left \|\sum_{i=1}^N(\check{v}^i-\bar{q})\right \|\le O\bigg(\frac{1}{\sqrt{N}}\bigg).
\end{equation}
Proceeding in the same way as in  \autoref{Nash main theorem}, we get 
\begin{equation*}
    \|\check{P}-\bar{p}\|=O\bigg(\frac{1}{\sqrt{N}}\bigg).
\end{equation*}
Hence,
\begin{equation}\label{I2:2}
    \bigg|\mathbb{E}\left[\int_0^T \Delta Q_t(\check{P}_t-\bar{p}(t))dt\right]\bigg |\le \|\Delta Q\|\cdot \|\check{P}-\bar{p}\|\le O\bigg(\frac{1}{\sqrt{N}}\bigg).
\end{equation}
Therefore, combining \eqref{I2:1} and \eqref{I2:2}, we conclude that
\begin{equation*}
    I_2=O\bigg(\frac{1}{\sqrt{N}}\bigg).
\end{equation*}
The proof is complete.

\end{proof}

\section{Numerical experiments}\label{sec:num}
In this section, we provide a numerical example for our model. We begin by presenting the parameter selection method. We take $[\alpha,\ \beta,\  \eta,\ \kappa,\ \gamma,\ \zeta,\ c,\ p_0,\ T]=[1,\ 4,\ 1,\ 4,\ 2,\ 9,\ 4,\ 3,\ 2]$. 
We assume that $x_i,\ \forall i=1,2,\ldots,N$ and $\sigma_i,\ \forall i=1,2,\ldots,N$ are independently and identically distributed according to
$U(2,2.5)$ and $U(1,1.5)$ respectively, and the total number of agents, denoted by $N$, is equal to 1000. Under this framework, it is easy to verify that \textbf{Assumptions} $\boldsymbol{(A_2)}$ and $\boldsymbol{(A_4)}$ are satisfied. Moreover, we obtain $B_1(T)=2.98\neq0$,
 $(b_1(T), \ l_1(T))=(2.22, -1.40)$ and $(b_2(T), \ l_2(T))=(3.82, 4.01)$. It is easily checked that \textbf{Assumptions} $\boldsymbol{(A_1)}$ and $\boldsymbol{(A_3)}$ also hold.
 
 Then, we compare the discrepancy between the approximation functions and the true values under the non-cooperate case and the cooperate case through numerical experiments. As shown below,  \autoref{fig:1} and \autoref{fig:2} compare the evolution of $\bar{p}$ and $P$, $\bar{q}$ and $Q$ in the non-cooperative game setting, respectively.  \autoref{fig:3} and \autoref{fig:4} present the same comparisons under the cooperative game setting. 
 From the plot, we can see that in the non-cooperate case, both curves in  \autoref{fig:1} exhibit a decreasing trend, and
  both curves in  \autoref{fig:2}  display a $U$-shaped trajectory. However, the cooperative strategies not only lead to a steeper initial decline in the price, but also cause  $Q$ to decrease monotonically. Besides, the cooperative case yields more stable and smoother trajectories, especially for the $Q$-related dynamics. This suggests that cooperation among agents can lead to more efficient and consistent outcomes in price and the average trading rate. 
  Although the shapes of the $P$ and $Q$ curves differ between the figures in both cases, 
 these figures all demonstrate that the mean-field approximation performs very well in both cooperative and non-cooperative settings. This property could be valuable in practical applications like smart grids or large-scale decentralized systems.




\begin{figure}[H]
    \centering
    \begin{subfigure}[t]{0.48\linewidth}
        \centering
        \includegraphics[width=\linewidth]{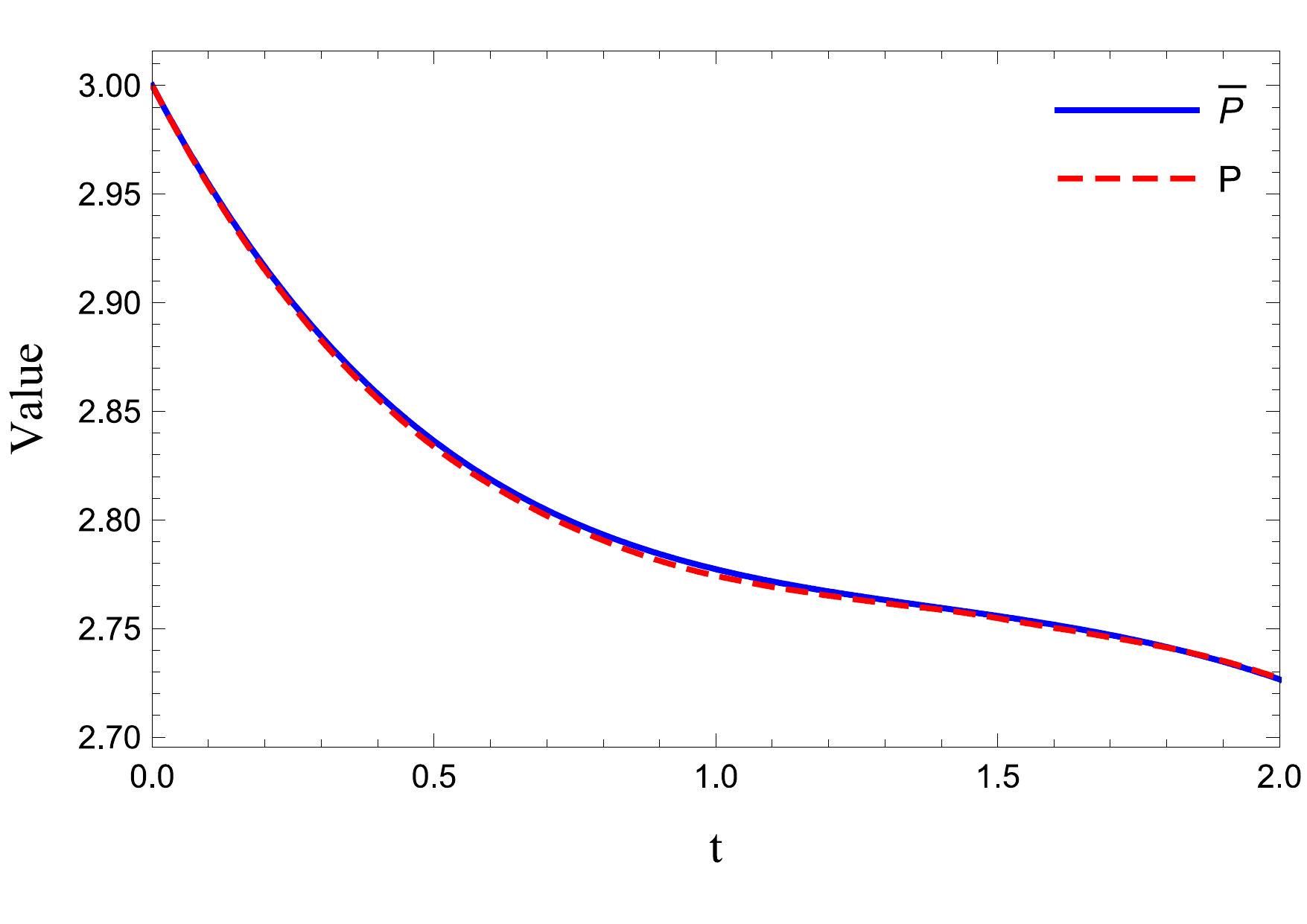}
        \caption{Curves of $\bar{P}$ and $P$ in the non-cooperate case}
        \label{fig:1}
    \end{subfigure}
    \hfill
    \begin{subfigure}[t]{0.48\linewidth}
        \centering
        \includegraphics[width=\linewidth]{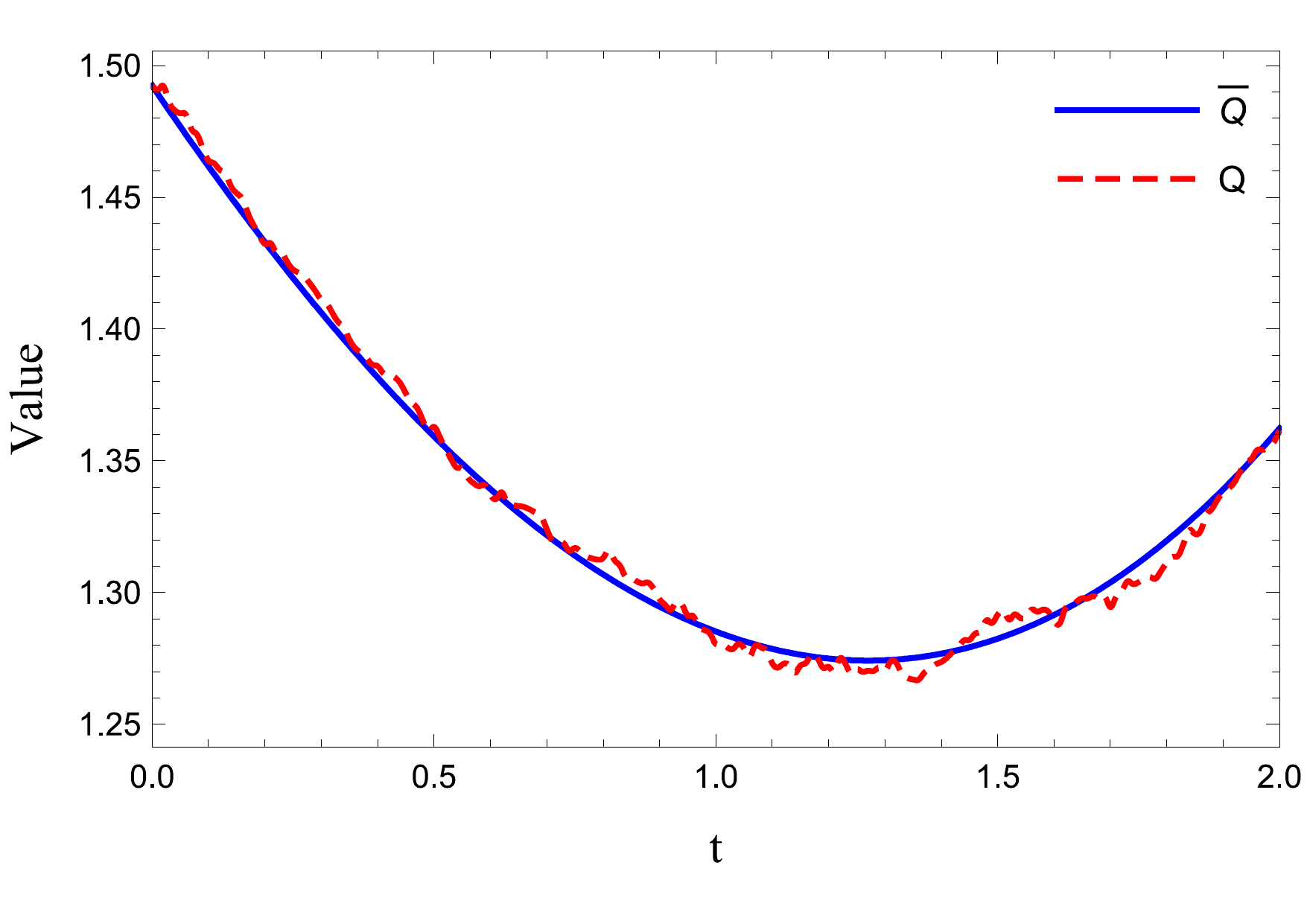}
        \caption{Curves of $\bar{Q}$ and $Q$ in the 
        non-cooperate case}
        \label{fig:2}
    \end{subfigure}
    \caption{Non-cooperate case: evolution of $\bar{P}, P$ and $\bar{Q}, Q$}
    \label{fig:combined}
\end{figure}

\begin{figure}[H]
    \centering
    \begin{subfigure}[t]{0.48\linewidth}
        \centering
        \includegraphics[width=\linewidth]{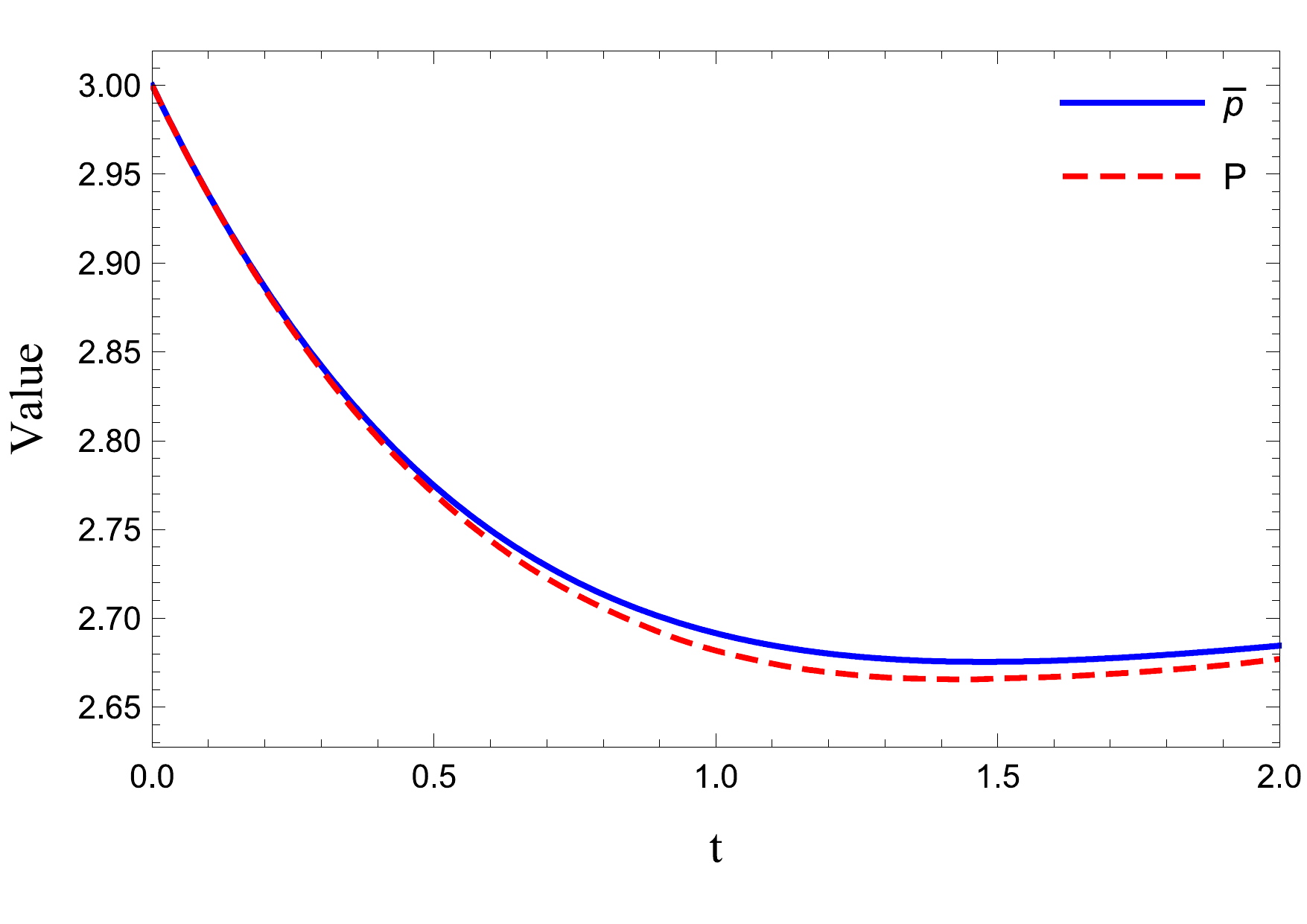}
        \caption{Curves of $\bar{p}$ and $P$ in the cooperate case}
        \label{fig:3}
    \end{subfigure}
    \hfill
    \begin{subfigure}[t]{0.48\linewidth}
        \centering
        \includegraphics[width=\linewidth]{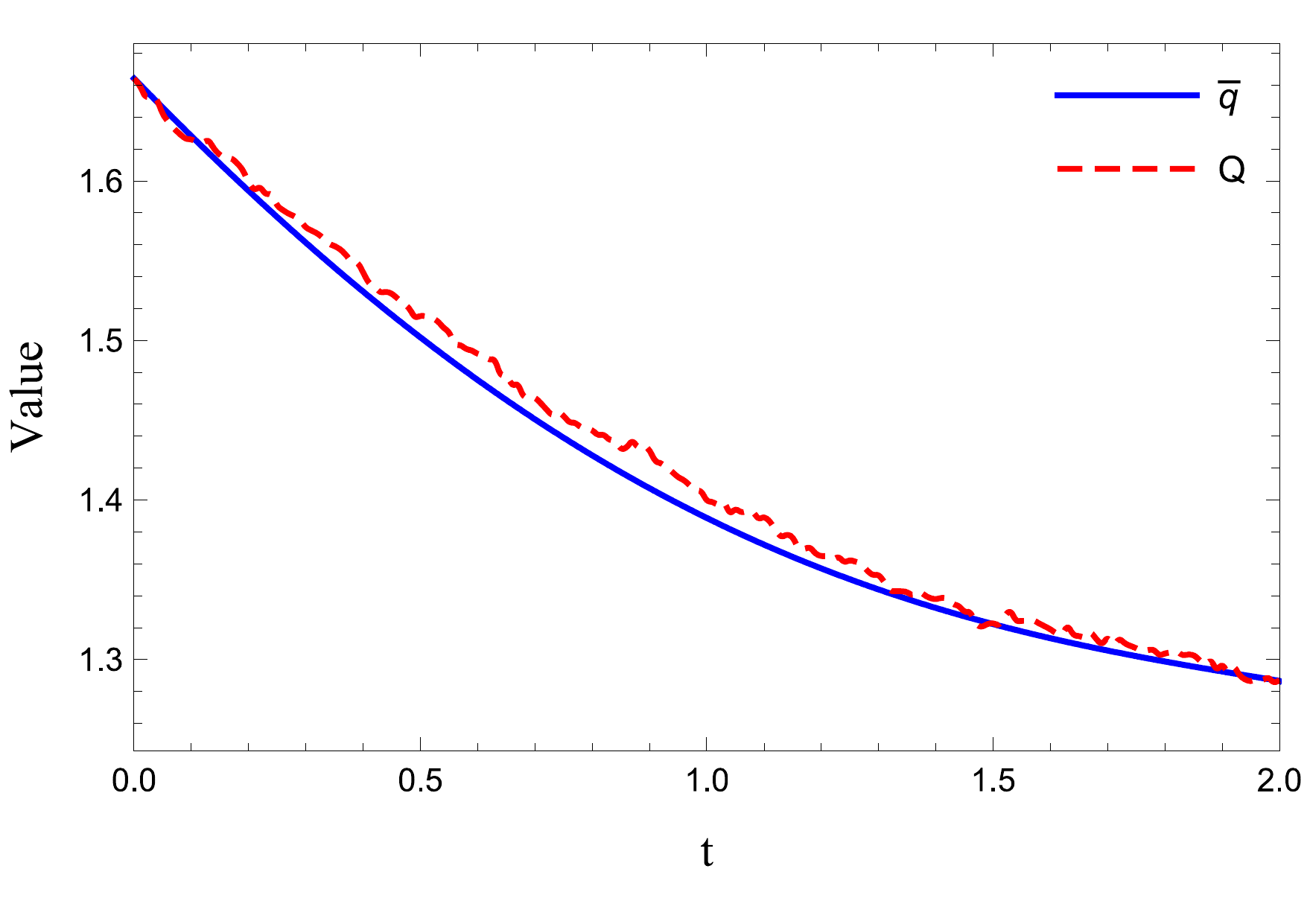}
        \caption{Curves of $\bar{q}$ and $Q$ in the cooperate case}
        \label{fig:4}
    \end{subfigure}
    \caption{Cooperate case: evolution of $\bar{p}, P$ and $\bar{q}, Q$}
    \label{fig:cooperate}
\end{figure}

As shown in  \autoref{vcom}, we also compare the evolution of the control variable 
$v$ for the first 10 agents under both cases. In \autoref{fig:5}, which corresponds to the non-cooperative case, the trajectories of $v^i$
  appear more tightly clustered, with relatively smaller deviations from one another over time. In contrast, \autoref{fig:6}, which shows the cooperative case, exhibits greater divergence in the trajectories of $v^i$. 
Overall, the figures illustrate how cooperation introduces more variability in individual control paths, while non-cooperation leads to more uniform behavior across agents. This reflects the fact that, in a cooperative setting, agents can  adjust their behaviors more flexibly to improve overall system performance.

  \begin{figure}[H]
    \centering
    \begin{subfigure}[t]{0.48\linewidth}
        \centering
        \includegraphics[width=\linewidth]{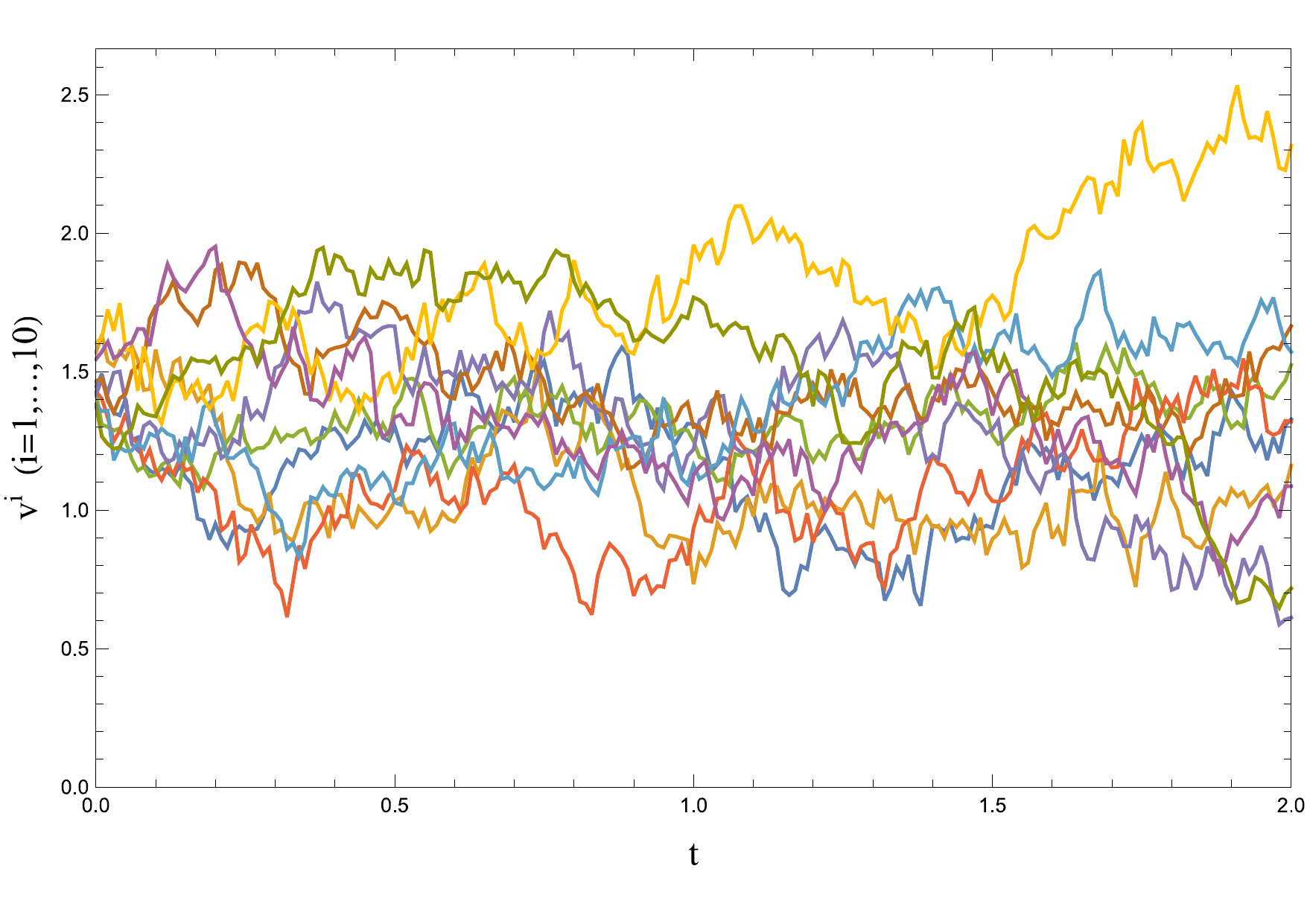}
        \caption{Evolution of $v^i$ for agent 1 to 10 (the non-cooperate case)}
        \label{fig:5}
    \end{subfigure}
    \hfill
    \begin{subfigure}[t]{0.48\linewidth}
        \centering
        \includegraphics[width=\linewidth]{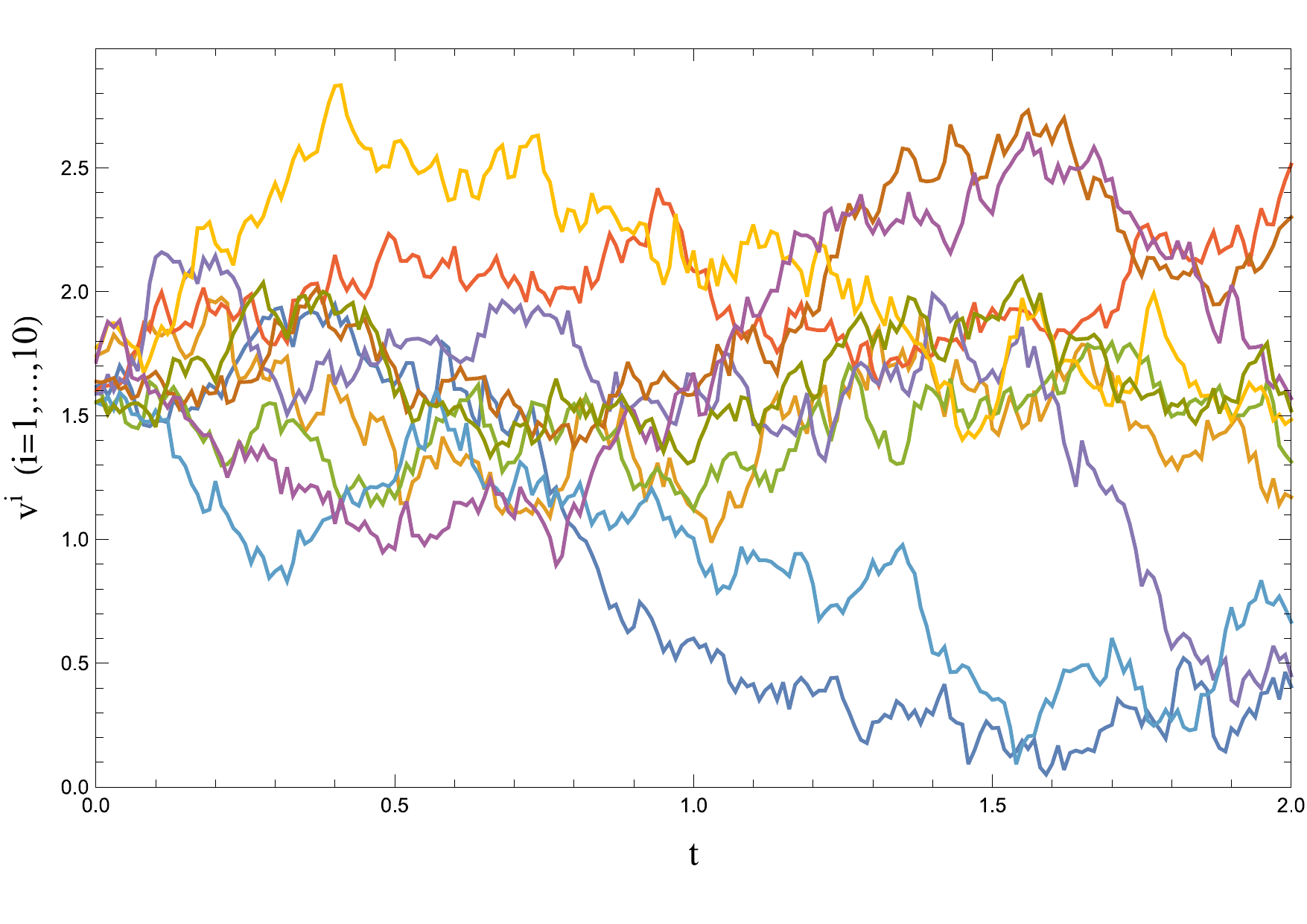}
        \caption{Evolution of $v^i$ for agent 1 to 10 (the cooperate case)}
        \label{fig:6}
    \end{subfigure}
    \caption{Comparison of the evolution process of $v^i$ under both cases}
  \label{vcom}
\end{figure}

\section{Conclusion}\label{sec:con}
This paper has developed a strategic framework for smart grids with many agents using mean field game methodology. Under sticky prices and finite time horizons, we analyze both the cooperative and non-cooperative settings. In the cooperative case, we derive strategies that minimize the expected social cost, while in the non-cooperative case, we construct approximate Nash equilibrium. Numerical experiments demonstrate the effectiveness of these approximations, offering insights for managing large-scale smart grid systems.

\appendix
\section{Proofs of Auxiliary Lemmas}\label{sec:appendix}   	

In this appendix, we collect the proofs of the auxiliary lemmas. We start with the proof of  \autoref{Proposed control bounded lemma for Nash}.
\begin{proof}[Proof of  \autoref{Proposed control bounded lemma for Nash}]
    It follows from \eqref{Nash process} that
    \begin{equation*}
        d \hat{X}_t^i=-\frac{\bar{P}(t)+2a(t)\hat{X}_t^i+B(t)}{c} dt+\sigma_i d W_t^i,\quad \hat{X}_0^i=x_0^i,
    \end{equation*}
which implies that
    \begin{equation*}
        d \overline{\hat{X}}_t=-\frac{\bar{P}(t)+2a(t)\overline{\hat{X}}_t+B(t)}{c} dt+\frac{1}{N}\sum_{i=1}^N \sigma_id W_t^i,\quad \overline{\hat{X}}_0=\bar{x}_0^N.
    \end{equation*}

Gronwall Lemma, along with \textbf{Assumption} $\boldsymbol{(A_2)}$, shows that there exists an $O(1)$ number $K$ such that
\begin{equation*}
    \mathbb{E}\left[\sup_{0\le t\le T }|\hat{X}_t^i|^2\right]\le K,\ \quad   \mathbb{E}\left[\sup_{0\le t\le T }\left|\overline{\hat{X}}_t\right|^2\right]\le K,\ \  \forall i=1,\ldots,N .
\end{equation*}

Therefore, from (\ref{Nash process}), 
there exists an $O(1)$ number (still denoted by $K$) that satisfies
\begin{align}\label{bound for X_t^i and v_t^i}
    &  \mathbb{E}\left[\int_0^T (\hat{v}_t^i)^2 dt\right] \le T \mathbb{E}\left[\sup_{0\le t\le T }(\hat{v}_t^i)^2 dt \right]\le K,\  \forall i=1,\ldots,N ,\ \nonumber\\
    & \mathbb{E}\left[\int_0^T \hat{Q}_t^2 dt\right] \le T\mathbb{E}\left[\sup_{0\le t\le T }\hat{Q}_t^2\right]\le K,\ \forall i=1,\ldots,N.
\end{align}
For any $ i$,  we may treat all other agents as a large population of which consists of $N-1$ agents. Applying the calculations from the previous calculations, we obtain
\begin{equation*}
        \mathbb{E}\left[\sup_{0\le t\le T }\left |\frac{1}{N-1}\sum_{k\neq i} \hat{v}_t^k\right |^2\right]\le K.
    \end{equation*}
This leads to 
\begin{equation*}
        \mathbb{E}\left[\sup_{0\le t\le T }\left|\frac{1}{N}\sum_{k\neq i} \hat{v}_t^k\right|^2\right]\le K.
\end{equation*}
To proceed, we require an exact expression of $\hat{P}$. Solving directly from (\ref{price process}) yields
\begin{equation*}
    \hat{P}_t=e^{-\alpha t}p_0+\int_0^t  e^{-\alpha (t-s) } \alpha \beta ds-\int_0^t  e^{-\alpha (t-s) } \alpha \hat{Q}_s ds.
\end{equation*}
It follows that 
\begin{align*}
    \mathbb{E}\left[\sup_{0\le t\le T }|{\hat{P}_t}|^2\right]&\le  2\mathbb{E}\left[\sup_{0\le t\le T }\left\{\left(e^{-\alpha t}p_0+\int_0^t  e^{-\alpha (t-s) } \alpha \beta ds\right)^2\right\}\right]\\
    &\quad+2 \mathbb{E}\left[\sup_{0\le t\le T }\left\{\left(\int_0^t  e^{-\alpha (t-s) } \alpha \hat{Q}_s ds\right)^2\right\}\right]\\
   & \le O(1)+2(\alpha T)^2 \mathbb{E}\left[\sup_{0\le t\le T }|{\hat{Q}_t}|^2\right]
    \\&\le O(1),
\end{align*}
where we have used the bounds in (\ref{bound for X_t^i and v_t^i}).
This shows that
\begin{equation*}
    \mathbb{E}\left[\int_0^T (\hat{P}_t)^2dt\right]\le T \mathbb{E}\left[\sup_{0\le t\le T }|{\hat{P}_t}|^2\right]\le O(1).
\end{equation*}
Moreover, (\ref{bound for X_t^i and v_t^i}) implies that 
\begin{equation*}
   \left |\mathbb{E}\left[\int_0^T \bigg(\frac{\eta}{2}(\hat{X}_t^i-\kappa)^2+\frac{c}{2}(\hat{v}_t^i)^2\bigg)dt+\frac{\gamma}{2}(\hat{X}_T-\zeta)^2\right]\right |\le O(1).
\end{equation*}
The $O(1)$ bound for $\|\hat{v}\|$ and $\|\hat{P}\|$, together with Cauchy-Schwartz inequality, implies
\begin{equation*}
    \left|\mathbb{E}
    \left[\int_0^T \hat{v}_t^i\hat{P}_tdt
    \right] \right|\le \|\hat{v}^i\|\cdot\|\hat{P}\|\le O(1).
\end{equation*}
This establishes that 
\begin{equation*}
    |J_i(\hat{v}^i,\hat{\boldsymbol{v}}^{-i})|\le O(1),\quad i=1,\ldots,N.
\end{equation*}
Let $K'$ be an $O(1)$ number that exceeds $K$ and all the $O(1)$ numbers above. With this, the proof of \autoref{Proposed control bounded lemma for Nash} is complete.
\end{proof}
    We now turn to the proofs of \autoref{PRL for Nash} and \autoref{Nash coercive lemma}.


\begin{proof}[Proof of  \autoref{PRL for Nash}]
    
Note that $v\in \mathcal{A}_i$ implies
\begin{equation*}
    \int_0^T v_t^2dt<\infty,\ a.s.
\end{equation*}
 Consequently, we restrict attention to events for which the above integral is finite. Under such events, the integral in \autoref{PRL for Nash} exists and is finite.
To present the proof clearly, we introduce the following notation:
\begin{equation*}
A_1=\begin{pmatrix}
    0 &0\\
    0 &{-\alpha}
\end{pmatrix}\preceq O,\quad
B_1=
    \begin{pmatrix}
    1\\
    -\frac{\alpha}{N}
\end{pmatrix},
\quad
    C_1=
\begin{pmatrix}
    {\epsilon}_1^* &1
\end{pmatrix},\quad
D_1={\epsilon}_2^*,
\end{equation*}
\begin{equation*}\label{lemma 2.2.2 notion}
    x_t=\begin{pmatrix}
    X_t^*\\
    P_t^*
\end{pmatrix},\quad
a^*={\epsilon}_1^*>0,\quad b^*=\frac{1}{2\alpha D_1}>0,\quad P_1=P_1^\top=diag{(a^*,b^*)}\succ O.
\end{equation*}
Recalling  \eqref{leq:XPstar}, we define
\begin{equation*}
    y_t:=\epsilon_1^{*}X_t^*+P_t^*+{\epsilon}_2^* v_t =C_1x_t+D_1v_t.
\end{equation*}
Our objective is to apply Positive Real Lemma (Section 2.7.2, \cite{Boyd94}) to the system. To proceed, we denote the LMI for the system by
\begin{equation*}
    L=\begin{pmatrix}
    A_1^\top P_1+P_1A_1  & P_1B_1-C_1^\top\\
    B_1^\top P_1-C_1& -D_1-D_1^\top
\end{pmatrix}
=
\begin{pmatrix}
    0  &0&a^*-\epsilon_1^*\\
    0& -2{\alpha b^*}& -\frac{b^*\alpha}{N}-1\\
    a^*-\epsilon_1^*&-\frac{b^*\alpha}{N}-1&-2D_1
\end{pmatrix}.
\end{equation*}
We investigate the conditions under which the LMI is semi-negative definite. By plugging in the exact value of $a^*$ and $b^*$, we discover that
\begin{equation*}\label{LMI for 2.2.2}
L \preceq O\iff O\preceq\begin{pmatrix}
    0 &0&-a^*+\epsilon_1^*\\
    0& 2{\alpha b^*}& \frac{b^*\alpha}{N}+1\\
    -a^*+\epsilon_1^*&\frac{b^*\alpha}{N}+1&2D_1
\end{pmatrix}  =
\begin{pmatrix}
     0 &0&0\\
    0& \frac{1}{\epsilon_2^*}& \frac{1}{2 \epsilon_2^*N}+1\\
    0&\frac{1}{2 \epsilon_2^*N}+1&2\epsilon_2^*
\end{pmatrix}.
\end{equation*}
Therefore,
\begin{equation*}
    L \preceq O\iff \text{det}
    \begin{pmatrix}
        \frac{1}{\epsilon_2^*}& \frac{1}{2 \epsilon_2^*N}+1\\
    \frac{1}{2 \epsilon_2^*N}+1&2\epsilon_2^*
    \end{pmatrix}
    \ge 0 \iff
    N\ge \frac{\sqrt{2}+1}{2{\epsilon_2}^*}.
\end{equation*}
Since we assume $N\ge\frac{\sqrt{2}+1}{2{\epsilon_2}^*}$,  the LMI is semi-negative definite. Positive Real Lemma implies
\begin{equation*}
    \int_0^T v_t (C_1x_t+D_1v_t)dt\ge 0.
\end{equation*}
Substituting the explicit forms of $C_1$ and $D_1$ and taking the expectation, we immediately obtain that  \autoref{PRL for Nash} holds.\\
\end{proof}

\begin{proof}[Proof of  \autoref{Nash coercive lemma}]
    Fix positive numbers $\epsilon_1, \epsilon_2, \epsilon_3,\epsilon_4$ and $\epsilon_5$  such that
    \begin{equation*}
      \epsilon_2=\frac{c}{8}, \quad 
      \epsilon_3<\frac{\eta}{4}, \quad 
      \epsilon_4+\epsilon_5< \frac{c}{8}, \quad 
      \epsilon_3\epsilon_4=\epsilon_1^2. 
    \end{equation*}
Our first step uses  (\ref{price process}), which gives
\begin{equation*}
P_t=e^{-\alpha t}p_0+\int_0^t  e^{-\alpha (t-s) } \alpha \beta ds
-\frac{\alpha}{N}\int_0^t  e^{-\alpha (t-s) }  \Big(\sum_{k\neq i} \hat{v}_s^k\Big)  ds-\frac{\alpha}{N}\int_0^t  e^{-\alpha (t-s) }  v_s^i ds.
   \end{equation*}
  To evaluate how $P$ changes in response to $v$, we decompose it into two components, one dependent on 
$v$ and the other independent of $v$. As shown below, we set
\begin{align*}
& P'_t :=e^{-\alpha t}p_0+\int_0^t  e^{-\alpha (t-s) } \alpha \beta ds
-\frac{\alpha}{N}\int_0^t  e^{-\alpha (t-s) }  \left(\sum_{k\neq i} \hat{v}_s^k\right)  ds, \\
&  P_t'' := -\frac{\alpha}{N}\int_0^t  e^{-\alpha (t-s) }  v_s^i ds.
\end{align*}
We naturally have $P_t=P_t'+P_t''$. 
The above expression for $P''_t$ is equivalently characterized by the dynamics $dP_t''=-\alpha P_t''-\frac{\alpha}{N}v_t^i$, $P''_0=0$.
Both processes $X_t^i$ and $P_t''$  are  driven by 
$v^i$, and their coefficients match those in \autoref{PRL for Nash}. However, applying the lemma directly requires that both initial conditions be zero, which does not hold in general. Thus, we take a detour and construct the process
\begin{equation*}
    d\dot{X}_t^i=v_t^idt,\ \dot{X}_0^i=0.
\end{equation*}
Note that the dynamics and initial value of $(\dot{X}_t^i, \  P''_t)^\top$ coincide with those in  \autoref{PRL for Nash}. Therefore,  we substitute $({X}_t^*,\ P_t^*)^\top$ with $(\dot{X}_t^i, \ P''_t)^\top$ in  \autoref{PRL for Nash}, and obtain that if $N>\frac{\sqrt{2}+1}{2{\epsilon_2}}$, then
\begin{equation}\label{Using lemma 2.2.2}
    \mathbb{E}\left[\int_0^T v_t^i (\epsilon_1\dot{X}_t^i+P''_t+\epsilon_2 v_t^i)dt\right]\ge 0.
\end{equation}
Combining $P_t=P_t'+P_t''$
and (\ref{Using lemma 2.2.2}), we have, for all  $ N>\frac{\sqrt{2}+1}{2{\epsilon_2}}$,
\begin{align}
    \label{Inequality for J_i 1}
    J_i(v^i,\hat{\boldsymbol{v}}^{-i}) & = \mathbb{E}\left[\int_0^T\left(\frac{\eta}{2}\left(X_t^i-\kappa\right)^2+\frac{c}{2}(v_t^i)^2+P_tv_t^i\right)dt+\frac{\gamma}{2}(X_T^i-\zeta)^2\right]\nonumber\\
   & \ge    \mathbb{E}\left[\int_0^T\left(\frac{\eta}{2}\left(X_t^i-\kappa\right)^2+\frac{c}{2}(v_t^i)^2+P_t''v_t^i-|v_t^iP_t'|\right)dt\right]      \nonumber \\
    & =     \mathbb{E}\left[\int_0^T \left(\frac{\eta}{2}\left(X_t^i-\kappa\right)^2+\left(\frac{c}{2}-\epsilon_2-\epsilon_5\right)(v_t^i)^2\right)dt\right]   +\mathbb{E}\left[\int_0^T  v_t^i\left(\epsilon_1\dot{X}_t^i+P''_t+\epsilon_2 v_t^i\right)dt\right]        \nonumber\\
    &\quad  +\mathbb{E}\left[\int_0^T\left(-\epsilon_1 \dot{X}_t^i v_t^i+\epsilon_5 (v_t^i)^2-|P_t' v_t^i |\right)dt\right]\nonumber\\
    & \ge  \mathbb{E}\left[\int_0^T \left(\frac{\eta}{2}\left(X_t^i-\kappa\right)^2+\left(\frac{c}{2}-\epsilon_2-\epsilon_5\right)(v_t^i)^2\right)dt\right]\nonumber\\
    & \quad +\mathbb{E}\left[\int_0^T\left(-\epsilon_1 \dot{X}_t^i v_t^i+\epsilon_5 (v_t^i)^2-|P_t' v_t^i |\right)dt\right].
\end{align}

From \autoref{Proposed control bounded lemma for Nash}, direct calculations and the Cauchy–Schwarz inequality yield
\begin{align*}
    \mathbb{E}\left[  \sup_{t\in [0,T]}(P'_t)^2\right]&\le 2\mathbb{E}\left[\sup_{t\in [0,T]}\left\{\left(e^{-\alpha t}p_0+\int_0^t  e^{-\alpha (t-s) } \alpha \beta ds\right)^2\right\}\right]\\
    &\quad +2\mathbb{E}\left[\sup_{t\in [0,T]}\left\{\left(\frac{\alpha}{N}\int_0^t  e^{-\alpha (t-s) }  \bigg(\sum_{k\neq i} \hat{v}_s^k\bigg)  ds\right)^2\right\}\right]\\
    &\le O(1),
\end{align*}
and hence
\begin{equation*}
    \mathbb{E}\left[\int_0^T (P'_t)^2dt\right]\le T\mathbb{E}\left[  \sup_{t\in [0,T]}(P'_t)^2\right]\le O(1).
\end{equation*}
  The fundamental inequality gives $|P'_tv_t^i|\le \frac{\epsilon_5}{2}(v_t^i)^2+\frac{1}{2\epsilon_5}(P'_t)^2$, leading to
\begin{equation*}
    \mathbb{E}\left[\int_0^T\big( \epsilon _5(v_t^i)^2-|P'_tv_t^i|\big)dt\right]\ge \mathbb{E}\left[\int_0^T(\epsilon _5(v_t^i)^2-\frac{1}{2}\epsilon _5(v_t^i)^2-O(1)(P'_t)^2)dt\right]\ge O(1).\label{Auxilary Inequality}
\end{equation*}
This simplifies (\ref{Inequality for J_i 1}) into the following inequality, which holds once $ N$ is larger than $\frac{\sqrt{2}+1}{2{\epsilon_2}}$,
\begin{equation*}
    J_i(v^i,\hat{\boldsymbol{v}}^{-i})\ge \mathbb{E}\left[\int_0^T \bigg(\frac{\eta}{2}\left(X_t^i-\kappa\right)^2+\left(\frac{c}{2}-\epsilon_2-\epsilon_5\right)(v_t^i)^2-\epsilon_1 \dot{X}_t^i v_t^i\bigg)dt\right]+O(1) \label{Inequality for J_i 2}.
\end{equation*}
We now control the possible negative cross-term $-\epsilon_1 \dot{X}_t^i v_t^i$ using suitable multiples of $(\dot{X}_t^i)^2$
  and 
$(v_t^i)^2$. Since the inequality contains  $X_t^i$, we rewrite $\mathbb{E}[(X_t^i-\kappa)^2]$ in terms of $\dot{X}_t^i$.  From $(X_t^i-\kappa)-(x_0^i+\sigma_i W_t^i-\kappa)=\dot{X}_t^i$, we get
\begin{align*}
    \mathbb{E}[(\dot{X}_t^i)^2]&\le 2\mathbb{E}[(X_t^i-\kappa)^2]+2\mathbb{E}[(x_0^i+\sigma_i W_t^i-\kappa)^2]\\
   &\le 2\mathbb{E}[(X_t^i-\kappa)]^2+4 (x_0^i-\kappa)^2+4\sigma_i^2T,
\end{align*}
and hence
    \begin{align*}
        \mathbb{E}\left[(X_t^i-\kappa)^2\right]
         \ge& \frac{1}{2}\mathbb{E}\left[(\dot{X}_t^i)^2\right]+O(1),
    \end{align*}
   and, for $\forall N>\frac{\sqrt{2}+1}{2{\epsilon_2}}=\frac{4(\sqrt{2}+1)}{c}$
\begin{align*}
    J_i(v^i,\hat{\boldsymbol{v}}^{-i})&\ge \mathbb{E}\left[\int_0^T \bigg(\frac{\eta}{2}\left(X_t^i-\kappa\right)^2+\left(\frac{c}{2}-\epsilon_2-\epsilon_5\right)(v_t^i)^2-\epsilon_1 \dot{X}_t^i v_t^i\bigg)dt\right]+O(1)\\
    &= \mathbb{E}\Bigg[\int_0^T \bigg(\left(\frac{\eta}{2}-\epsilon_3\right)(X_t^i-\kappa)^2+\left(\frac{c}{2}-\epsilon_2-\epsilon_4-\epsilon_5\right)(v_t^i)^2\bigg)dt\\
    &\quad+\mathbb{E}\left[\int_0^T\left(-\epsilon_1 \dot{X}_t^i v_t^i+\epsilon_4(v_t^i)^2+  \epsilon_3 (X_t^i-\kappa)^2 \right) dt\right]+O(1)\\
    & \ge\mathbb{E}\Bigg[\int_0^T \bigg(\left(\frac{\eta}{2}-\epsilon_3\right)(X_t^i-\kappa)^2+\left(\frac{c}{2}-\epsilon_2-\epsilon_4-\epsilon_5\right)(v_t^i)^2\bigg)dt\\
    &\quad+\mathbb{E}\left[\int_0^T\left(-\epsilon_1 \dot{X}_t^i v_t^i+\epsilon_4(v_t^i)^2+\frac{\epsilon_3}{2}(\dot{X}_t^i)^2\right) dt\right]+O(1),\\
    & \ge\mathbb{E}\Bigg[\int_0^T \bigg(\left(\frac{\eta}{2}-\epsilon_3\right)(X_t^i-\kappa)^2+\left(\frac{c}{2}-\epsilon_2-\epsilon_4-\epsilon_5\right)(v_t^i)^2\bigg)dt+O(1),\
\end{align*}
    where the last step relies on the restriction $\epsilon_1^2=\epsilon_3\epsilon_4$. 
   Furthermore,  applying the restrictions on $\epsilon_1, \epsilon_2, \epsilon_3,\epsilon_4$ and $\epsilon_5$ yield the desired result.  Therefore, the proof is complete.

\end{proof}
The proofs of \autoref{PRL for social} and \autoref{coercive condition for social} are presented below.

\begin{proof}[Proof of  \autoref{PRL for social}]
For $v\in \mathcal{A}_i$, we impose the condition $\int_0^T v_t^2 dt<\infty$, as in the proof of  \autoref{PRL for Nash}.
We introduce the notion
\begin{equation*}
 A_2=\begin{pmatrix}
    0 &0\\
    0 & -\alpha
\end{pmatrix}
\preceq O,\quad
B_2=
\begin{pmatrix}
    1\\
    -\alpha
\end{pmatrix},\quad
C_2=\begin{pmatrix}
    \alpha & 1
\end{pmatrix},\quad
D_2=1,
\end{equation*}
and  the output function 
\begin{equation*}
      \tilde{y}_t:= v_t+\alpha \tilde{X}_t+\tilde{P}_t =C_2 \tilde{x}_t+D_2,
\end{equation*}
which allows us to rewrite the system as
\begin{equation*}
    \frac{d}{dt}  \tilde{x}_t=A_2 \tilde{x}_t+B_2,\quad  \tilde{x}_0= (0,\ 0)^{\top}.
\end{equation*}
We further introduce
\begin{equation*}
   \tilde{a} =\alpha,\quad \tilde{b}=\frac{1}{\alpha},\quad P_2=diag{(\tilde{a},\tilde{b})}\succ O,
\end{equation*}
and define the corresponding LMI by
\begin{equation*}
    L_2=\begin{pmatrix}
    A_2^\top P_2+P_2A_2  & P_2B_2-C_2^\top\\
    B_2^\top P_2-C_2& -D_2-D_2^\top
\end{pmatrix}
=
\begin{pmatrix}
    0 &0&\tilde{a}-\alpha\\
    0& -2\alpha \tilde{b}& -\tilde{b}\alpha-1\\
    \tilde{a}-\alpha&-\tilde{b}\alpha-1&-2
\end{pmatrix}.
\end{equation*}
We state that $L_2\preceq 0$, which is equivalent to
\begin{equation*}
    0\preceq
\begin{pmatrix}
    0 &0&-\tilde{a}+\alpha\\
    0& 2\alpha \tilde{b}& \tilde{b}\alpha+1\\
    -\tilde{a}+\alpha&\tilde{b}\alpha+1&2
\end{pmatrix}
=
\begin{pmatrix}
    0 &0&0\\
    0& 2& 2\\
    0&2&2
\end{pmatrix}.
\end{equation*}
This is immediate. By the Positive Real Lemma, we conclude that
\begin{equation*}
    \int_0^T v_t \tilde{y}_tdt=\int_0^T v_t(v_t+\alpha\tilde{X}_t +\tilde{P}_t) dt \ge 0\label{statement for auxilary process},
\end{equation*}
 and the proof follows upon taking expectations.
\end{proof}

\begin{proof}[Proof of  \autoref{coercive condition for social}]
    To proceed with the proof of \autoref{coercive condition for social}, we make use of \textbf{Assumption} $(\boldsymbol{A_4})$. We fix $\epsilon, \epsilon'>0$ small enough, along with $A\in (0,\frac{\eta}{2})$ and $h\in (0,1)$,  such that
\begin{equation}\label{parameter choosing law}
    2\sqrt{\left(\frac{c}{2}-\epsilon'-1-\epsilon\right)Ah}> \alpha.
\end{equation}
To apply \autoref{PRL for social}, which requires zero initial conditions, we introduce three processes  $P_t^\dagger$, $\overline{X}_t$ and $\overline{X}_t'$, whose dynamics satisfy
\begin{align*}
& d\overline{X}_t=Q_tdt+\frac{1}{N}\sum_{k=1}^N \sigma_k dW_t^k:= Q_tdt+\sigma dB_t, \quad\overline{X}_0=\bar{x}_0^N, \quad\bigg(\sigma= \frac{1}{N} \sqrt{\sum_{k=1}^N \sigma_k^2}\bigg), \\
&  d\overline{X}_t'=Q_tdt ,\quad \overline{X}_0'=0, \\
&  d{P}_t^\dagger=\alpha(-Q_t-P_t^\dagger)dt, \quad P_0^\dagger=0.
\end{align*}
  $\overline{X}_t$, the average of $X_t^i $, contains a Brownian motion term and  does not start from zero. The other two processes start at zero and have no Brownian components. From their definitions, we obtain 
\begin{equation*}
    \overline{X}_t=\bar{x}_0^N+\overline{X}_t'+\sigma B_t,
\end{equation*}
and a direct computation shows that
\begin{equation}\label{auxilary price}
    P_t^\dagger=-\int_0^t  e^{-\alpha(t-s)}\alpha Q_s ds.
\end{equation}
The dynamics and initial value of $(\overline{X}_t',\ P_t^\dagger)^\top $ coincide with those of the process  in  \autoref{PRL for social}.
Therefore,  \autoref{PRL for social} implies that
\begin{equation}\label{usePRL}
    \mathbb{E}\left[\int_0^T Q_t(Q_t+P_t^\dagger+\alpha \overline{X_t}')dt\right]\ge 0.
\end{equation}
Next, recall that
\begin{equation*}
   J_{soc}(\boldsymbol{v}) =\frac{1}{N}\sum_{i=1}^N\mathbb{E}\left[\int_0^T\left(\frac{\eta}{2}\left(X_t^i-\kappa\right)^2+\frac{c}{2}(v_t^i)^2+P_tv_t^i\right)dt+\frac{\gamma}{2}(X_T^i-\zeta)^2\right].
\end{equation*}
Since \eqref{usePRL} involves $P_t^\dagger$ and $\overline{X}_t'$, which do not appear in $J_{soc}$, we aim to express $J_{soc}$ in a
form where $\frac{\eta}{2}(X_t^i-\kappa)^2$ and part of $\frac{c}{2}v_t^i$ are replaced by terms of $P_t^\dagger$, $Q_t$ and $\overline{X}_t'$. Our first step uses  the convexity of $L$, which indicates that  
\begin{equation}\label{Inequality for J 1}
    J_{soc} (\boldsymbol{v})\ge \frac{1}{N}\mathbb{E}\left[\sum_{i=1}^N\int_0^T  \bigg(\frac{\eta}{2}(\overline{X}_t)^2-\eta \kappa\overline{X}_t+\frac{\eta\kappa^2}{2}+\Big(\frac{c}{2}-\epsilon'\Big)Q_t^2 +P_t Q_t+\epsilon' (v_t^i)^2\bigg)dt\right].
\end{equation}
Our second step is based on separating the control-independent part of the price.
We introduce the function
\begin{equation*}
    C(t)=e^{-\alpha t}p_0+\int_0^t  e^{-\alpha (t-s) } \alpha \beta ds, \quad \forall t\in[0,T],
\end{equation*}
which is uniformly bounded on $[0,T]$.  Using the exact form of $P_t$, we have
\begin{align}\label{price equation}
    P_t&=e^{-\alpha t}p_0+\int_0^t  e^{-\alpha (t-s) } \alpha \beta ds-\int_0^t  e^{-\alpha (t-s) } \alpha Q_s ds\nonumber\\
&=P_t^\dagger+C(t).
\end{align}
Our third step concerns the relationship between $\overline{X}_t$ and $\overline{X}_t'$. 
\begin{align*}
\mathbb{E}\left[(\bar{x}_0^N+\overline{X}_t')^2\right]
&=\mathbb{E}\left[(\overline{X}_t-\sigma B_t)^2\right]\\
&\le \mathbb{E}\left[(\overline{X}_t)^2\right]+\left(\frac{1}{h}-1\right)\mathbb{E}\left[(\overline{X}_t)^2\right]+\frac{h}{1-h}\mathbb{E}\left[\sigma^2B_t^2\right]+O(1)\\
&= \frac{1}{h}\mathbb{E}\left[(\overline{X}_t)^2\right]+O(1).
\end{align*}
This implies that
\begin{equation}\label{Inequality for X's average}
    \mathbb{E}[(\overline{X}_t)^2]\ge h \mathbb{E}[(\bar{x}_0^N+\overline{X}_t')^2]+O(1).
\end{equation}
Substituting \eqref{price equation} and \eqref{Inequality for X's average} into \eqref{Inequality for J 1} and rearranging terms yield
\begin{align}\label{jsocin1}
    J_{soc}(\boldsymbol{v})&\ge \frac{1}{N}\mathbb{E}\left[\sum_{i=1}^N \int_0^T \left(\frac{\eta h}{2}\left({(\bar{x}_0^N)}^2+2\overline{X_t}'\bar{x}_0^N\right)-\eta\kappa\overline{X_t}'+\left(\frac{\eta h}{2}-A h\right)\overline{X_t}'^2\right)dt\right]\nonumber\\
    &\quad +\frac{1}{N}\mathbb{E}\left[\sum_{i=1}^N \int_0^T\left(P_t^\dagger Q_t+Q_t^2+\alpha Q_t\overline{X_t}'+Ah \overline{X_t}'^2+\left(\frac{c}{2}-\epsilon'-1-\epsilon\right) Q_t^2-\alpha Q_t\overline{X_t}'\right)dt\right]\nonumber\\
    & \quad   +\frac{1}{N}\mathbb{E}\left[\int_0^T\left(C(t)Q_t
  +\epsilon Q_t^2+\epsilon'{(v_t^i)}^2\right)dt\right] +O(1).
\end{align}
We have shown via \autoref{PRL for social} that  $\mathbb{E}\left[\int_0^T(P_t^\dagger Q_t+Q_t^2+\alpha Q_t\overline{X}_t')dt\right]\ge 0$, and by \eqref{parameter choosing law} and the fundamental inequality,  $(Ah\overline{X}_t'^2+(\frac{c}{2}-\epsilon'-1-\epsilon) Q_t^2-\alpha Q_t\overline{X}_t')$  stays non-negative. Hence we  reduce \eqref{jsocin1} into
\begin{align*}
    J_{soc}(\boldsymbol{v}) & \ge \frac{1}{N}\mathbb{E}\bigg[ \sum_{i=1}^N \int_0^T \bigg(\frac{\eta h}{2}\Big(({\bar{x}_0^N})^2+2\overline{X}_t'\bar{x}_0^N\Big)-\eta\kappa\overline{X}_t' +\left(\frac{\eta h}{2}-Ah\right)\overline{X}_t'^2 \\
    &\qquad\qquad +\left(\frac{\epsilon}{2} Q_t^2-\|C(t)\|_{\infty} |Q_t|\right)+\frac{\epsilon}{2} Q_t^2+\epsilon' (v_t^i)^2\bigg)dt\bigg]+O(1).
\end{align*}
Finally, both $(\frac{\eta}{2}h-Ah)\overline{X}_t'^2+\frac{\eta h}{2}\big(({\bar{x}_0^N})^2+2\overline{X}_t'\bar{x}_0^N\big)-\eta\kappa\overline{X}_t'$ and 
$\frac{\epsilon}{2} Q_t^2-\|C(t)\|_{\infty} |Q_t|$ have  $O(1)$ lower bounds, so
\begin{equation*}
    J_{soc}(\boldsymbol{v}) \ge \frac{1}{N}\mathbb{E}\left[\sum_{i=1}^N \left(\int_0^T \left(\frac{\epsilon}{2} Q_t^2+\epsilon' (v_t^i)^2\right)dt\right)\right]+O(1),
\end{equation*}
and the proof of \autoref{coercive condition for social} is complete.

    \end{proof}
    \section{Proofs of Auxiliary Propositions}\label{sec:appendix2}

In this section, we collect the proofs of the propositions.

\begin{proof}[Proof of \autoref{a(t) exists theorem}]

First, we need to confirm that (\ref{Nash HJB eq1}) admits a unique classical solution on $[0,T]$. As stated before, we abbreviate $a_i$ and $B_i$ as $a$ and $B$, respectively. To prove existence, let function 
\begin{equation*}
    Z(a)=-\frac{\eta}{2}+\frac{2a^2}{c}, \quad a\in \mathbb{R}.
\end{equation*}
Note that if $\gamma\neq \sqrt{c\eta}$, \eqref{Nash HJB eq1} is equivalent to 
\begin{equation*}
    \frac{d a}{Z(a)}=dt,\quad a(T)=\frac{\gamma}{2}.                  \label{ODE for a}
\end{equation*}
By integrating the above equation from $T$ to $t$, and observing that if $\gamma=\sqrt{c\eta}$, the solution must be a constant, we obtain that
\begin{equation*}
    \left\{
\begin{aligned}
 &a(t) = -\frac{\sqrt{c \eta}}{2} \frac{1}{\tanh\left( \sqrt{\frac{\eta}{c}}(t - T) - \operatorname{arctanh} \left( \frac{\sqrt{c \eta} }{\gamma} \right) \right)},\quad \gamma>\sqrt{c\eta},\\
 & a(t)\equiv \frac{\sqrt{c\eta}}{2},\quad \gamma=\sqrt{c\eta},\\
 &a(t)=\frac{\sqrt{c\eta}}{2} \operatorname{tanh}\left(\sqrt{\frac{\eta}{c}}\left(t-T\right)+\operatorname{arctanh}\left(\frac{\gamma}{\sqrt{c\eta}}\right)\right), \quad \gamma<\sqrt{c\eta}.
 \end{aligned}
 \right.
\end{equation*}

From the analysis above, we conclude that (\ref{Nash HJB eq1}) gives a unique solution on $[0,T]$.
 In the following, it is convenient to explicitly compute from (\ref{Nash HJB eq2})
 \begin{equation*}
     B(t)=e^{-A(t)}\bigg(-e^{A(T)}\gamma\zeta-\int_t^T \left(\frac{2a(s)\bar{P}(s)}{c}+\eta 
     \kappa\right)e^{A(s)}ds\bigg),
 \end{equation*}
 where $A(t)$ is a primitive function for $-\frac{2a(t)}{c}$, so (\ref{Nash HJB eq2}) gives a unique solution on $[0,T]$. The solution to (\ref{Nash HJB eq3}) obviously exists and is unique, once we integrate both sides from $T$ to $t$. The proof is complete.
\end{proof}

\begin{proof}[Proof of \autoref{propo for existence}]
   
Let
\begin{equation*}
    \bar{k}(t)
    =\begin{pmatrix}
B(t)\\
\bar{x}(t)\\
\bar{P}(t)
\end{pmatrix},
\quad 
    M(t)=
\begin{pmatrix}
    \frac{2a(t)}{c}&0&\frac{2a(t)}{c}\\
    -\frac{1}{c} &-\frac{2a(t)}{c}&-\frac{1}{c}\\
    \frac{\alpha}{c}&\frac{2\alpha a(t)}{c}&-\alpha+\frac{\alpha}{c}
\end{pmatrix}, \quad 
    K=
\begin{pmatrix}
    \eta \kappa\\
    0\\
    \alpha \beta
\end{pmatrix}.
\end{equation*}
 By Peano theorem, the initial condition gives a local solution. We need to verify that this solution extends to $t=T$ and does not blow up.

We introduce the following notation: the Frobenius norm of any matrix $L$ is denoted by $\|L\|_{\mathrm{F}}$. It is clear that $\|M(t)\|_{\mathrm{F}}$ has a uniform bound for $t\in [0,T]$; we denote this bound by $M^*$, and denote $\|\bar{k}(t)\|_{\mathrm{F}}$ by $n(t)$. Multiply both sides of (\ref{mean field process evolution}) with $2 \bar{k}(t)^\top$, we obtain
\begin{equation*}
    \frac{d}{dt} (n^2(t))=2(\bar{k}(t)^{\mathrm{T}}M(t)\bar{k}(t)+\bar{k}(t)^{\top}K)\le 2n^2(t)M^*+n(t)\|K\|_{\mathrm{F}}.
\end{equation*}
By ODE comparison theorems, $n(t)$ stays below a function $N(t)$ characterized by
\begin{equation*}
    d (N^2(t))=2(M^* N^2(t)+\|K\|_{\mathrm{F}}N(t)),\quad N(0)=n(0)+1.
\end{equation*}
It directly follows that $N(t)$ has a uniform bound on $t\in[0,T]$, so $\bar{k}(t)$ stays finite on $[0,T]$.
Therefore, the existence of the solution holds. Moreover, $M(t)$ is Lipschitz continuous w.r.t $t\in [0,T]$, so  uniqueness naturally follows from the Picard-Lindelöf theorem. The proof is complete.

\end{proof}

\begin{proof}[Proof of  \autoref{propo for existence 2}]
We adopt the notation established in the proofs of  \autoref{propo for existence}.  Deducing in the same way as in the proof of  \autoref{propo for existence}, we obtain the existence property. Recalling that $M(t)$ is Lipschitz continuous, we obtain the uniqueness property.
\end{proof}

\begin{proof}[Proof of  \autoref{A_2 imply existence}]
We adopt the notation established previously in  \autoref{propo for existence}.
    By the definition of $\phi_{b_0}$, as shown in  \autoref{propo for existence} , we discover that for any $b_0\neq 0$, $\frac{\phi_{b_0}-\phi_{0}}{b_0}$  solves (\ref{A_1 process}). Consequently, from the uniqueness property provided by  \autoref{propo for existence 2}, we have
    \begin{equation*}
        \phi_{b_0}(t)-\phi_{0}(t)=b_0B_1(t).
    \end{equation*}
    Set $t=T$. From $B_1(T)\neq 0$, we conclude that there exists a unique $b_0\in \mathbb{R}$ that satisfies
    $\phi_{b_0}(T)=-\zeta\gamma$. Thus, there exists only one solution to (\ref{mean field process evolution}) and (\ref{mean field process evolution:boundary value}). The proof is complete.
\end{proof}

\begin{proof}[Proof of \autoref{propo for ensuring social MFG}]
    Set 
    \begin{equation*}
        G(t)=\begin{pmatrix}
    -\alpha+\frac{\alpha}{c} & \frac{2\alpha a(t)   }{c}& \frac{\alpha}{c}& 
    \frac{\alpha^2}{c}\\
    -\frac{1}{c}&-\frac{2a(t)}{c }& -\frac{1}{c}&-\frac{\alpha}{c}\\
    \frac{2a(t)}{c }& 0 &\frac{2a(t)}{c }&\frac{2\alpha a(t)}{c }\\
    -\frac{1}{c} &-\frac{2a(t)}{c}&-\frac{1}{c}&\alpha-\frac{\alpha}{c}
\end{pmatrix}.
    \end{equation*}
 To prove existence, we observe that $\|G(t)\|_\mathrm{F}$ has a uniform bound on $ [0,T]$. Consequently, by working exactly as in  \autoref{propo for existence}, we obtain the uniqueness property. Note that $G(t)$ is continuously differentiable, so the solution of the system is unique,  from the Picard-Lindelöf theorem.
\end{proof}

\begin{proof}[Proof of \autoref{propo for ensuring social solution 2}]
     The existence holds by the same way as in  \autoref{propo for existence}. Recalling from  \autoref{propo for ensuring social MFG} that $G(t)$ is continuously differentiable everywhere, we conclude that the solution is unique, by the Picard-Lindelöf  theorem.
\end{proof}

\begin{proof}[Proof of \autoref{A3givesol}]
    Observe that , for $\forall b_0,\ l_0\in \mathbb{R}$, $b_0\phi_1^*+l_0\phi_2^*+\phi_{0,0}^*$ solves \eqref{mean field process evolution for Social}, and has boundary values $(\bar{p}(0),\ \bar{x}(0), \ {b}(0),\ {l}(0))^\top=(p_0,\ \bar{x}_0^N,\ b_0,\ l_0)^\top$. Then, uniqueness property in   \autoref{propo for ensuring social solution 2} yields that $\phi_{b_0,l_0}^*=b_0\phi_1^*+l_0\phi_2^*+\phi_{0,0}^*$. Therefore, by setting $t=T$, we get $\phi_{b_0,l_0}^*(T)=b_0\phi_1^*(T)+l_0\phi_2^*(T)+\phi_{0,0}^*(T)$.
    By \textbf{Assumption} $(\boldsymbol{A_3})$, there exists a unique pair of constants $b_0,\ l_0\in \mathbb{R}$ such that the initial values $(\bar{p}(0),\ \bar{x}(0), \ {b}(0),\ {l}(0))^\top=(p_0,\ \bar{x}_0^N,\ b_0,\ l_0)^\top$ gives a solution that satisfies (\ref{mean field process evolution for Social}) and (\ref{mean field process evolution for Social: ini&ter values}). Thus, the solution for (\ref{mean field process evolution for Social}) and (\ref{mean field process evolution for Social: ini&ter values}) exists and is unique. The proof is complete.
\end{proof}

\end{document}